\definecolor{purple}{rgb}{.5,0,1}
\definecolor{orange}{rgb}{1,.5,0}
\definecolor{pink}{rgb}{1,0,.5}
\definecolor{green}{rgb}{0,.5,0}
\definecolor{gold}{rgb}{1,.623,0}
\date{}
\newcommand{\K}{{\mathbbm K}}
\newcommand{\Hi}{{\mathcal H}}
\newcommand{\Z}{{\mathbbm Z}}
\newcommand{\R}{{\mathbbm R}}
\newcommand{\C}{{\mathbbm C}}
\newcommand{\N}{{\mathbbm N}}
\newcommand{\I}{{\mathbbm I}}
\newcommand{\D}{{\mathbbm D}}
\newcommand{\U}{{\mathbbm U}}
\newcommand{\E}{{\mathcal E}}
\newcommand{\F}{{\mathcal F}}
\renewcommand{\P}{\mathrm P}
\newcommand{\spr}{{\mathrm{spr}}}
\newcommand{\Itr}{{\mathrm{Itr}}}
\newcommand{\Id}{{\mathbbm{I}}}
\newcommand{\per}{{\mathrm{per}}}
\newcommand{\M}{{\mathcal{M}}}
\renewcommand{\P}{{\mathrm{P}}}
\newcommand{\dd}{{\mathrm{d}}}
\newcommand{\ac}{{\mathrm{ac}}}
\newcommand{\pp}{{\mathrm{pp}}}
\newcommand{\SL}{{\mathbbm{SL}}}
\newcommand{\SU}{{\mathbbm{SU}}}
\newcommand{\su}{{\mathfrak{su}}}
\newcommand{\LP}{{\mathrm{LP}}}
\newcommand{\Hd}{{\mathrm{H}}}
\newtheorem{theorem}{Theorem}[section]
\newtheorem{lemma}[theorem]{Lemma}
\newtheorem{prop}[theorem]{Proposition}
\newtheorem{coro}[theorem]{Corollary}
\newtheorem{claim}{Claim}
\theoremstyle{definition}
\newtheorem{remark}[theorem]{Remark}
\newtheorem*{remark*}{Remark}
\theoremstyle{definition}
\newtheorem*{defi}{Definition}
\theoremstyle{definition}
\theoremstyle{definition}
\numberwithin{equation}{section}
\renewcommand{\Im}{\mathrm{Im}}
\renewcommand{\Re}{\mathrm{Re}}
\newcommand{\tr}{\mathrm{tr} }
\newcommand{\Dir}{{\mathrm{D}}}
\newcommand{\Leb}{{\mathrm{Leb}}}
\newcommand{\set}[1]{\left\{#1\right\}}
\newcommand{\eqdef}{\overset{\mathrm{def}}=}
\begin{document}

\title[Singular continuous spectrum for CMV matrices]{Purely singular continuous spectrum for limit-periodic CMV operators with applications to quantum walks}

\author[J.\ Fillman]{Jake Fillman}

\address{Virginia Tech}

\email{ fillman@vt.edu}

\thanks{J.\ F.\ was supported in part by an AMS-Simons travel grant, 2016--2018}

\author[D.C.\ Ong]{Darren C.\ Ong}

\address{Xiamen University Malaysia}

\email{darrenong@xmu.edu.my}

\maketitle

\begin{abstract}
We show that a generic element of a space of limit-periodic CMV operators has zero-measure Cantor spectrum. We also prove a Craig--Simon type theorem for the density of states measure associated with a stochastic family of CMV matrices and use our construction from the first part to prove that the Craig--Simon result is optimal in general. We discuss applications of these results to a quantum walk model where the coins are arranged according to a limit-periodic sequence. The key ingredient in these results is a new formula which may be viewed as a relationship between the density of states measure of a CMV matrix and its Schur function.
\end{abstract}

\setcounter{tocdepth}{1}
\tableofcontents

\section{Introduction}
Consider a discrete one-dimensional random walk model on the integers, where each integer location is assigned a weighted coin. At each time step, the walker flips the coin at her location, and moves one integer to the left if the coin lands heads, or one integer to the right if the coin lands tails. There are several interesting questions one can ask about the long-term behavior of the walker; for example, one may ask whether the walker remains close to the starting position or goes off to infinity in either direction, given particular weightings of the coins at each location.

There has been a lot of recent interest among physicists in a quantum mechanical analogue of this classical random walk, known as a \emph{discrete-time quantum walk}. In this model, the walker (which we imagine as a quantum particle) possesses a spin (either $\uparrow$ or $\downarrow$) as well as an integer location; moreover, the walker may be in a superposition of pure states, rather than being purely localized at a particular site with a definite spin. Instead of a weighted coin at each location, we have a unitary operator (which we call the quantum coin) at each location that interacts with the particle differently depending on its spin.

The paper \cite{CGMV} ushered in a revolution in the study of these 1D quantum walks. In particular, the authors relate the time-one map of a quantum walk to a \emph{CMV matrix}, that is, a pentadiagonal unitary operator on $\ell^2(\Z)$ with a repeating $2 \times 4$ block structure of the form
\begin{equation} \label{eq:stdcmvdef}
\E
=
\E_\alpha
=
\begin{bmatrix}
\ddots & \ddots & \ddots & \ddots &&& \\
& \overline{\alpha_2}\rho_1 
& -\overline{\alpha_2}\alpha_1 
& \overline{\alpha_3} \rho_2 
& \rho_3\rho_2 &\\
& \rho_2\rho_1 
& -\rho_2\alpha_1 
& -\overline{\alpha_3}\alpha_2 
& -\rho_3\alpha_2 & \\
&& \ddots & \ddots & \ddots & \ddots
\end{bmatrix},
\end{equation}
where $\alpha_n \in \D \eqdef \{ z \in \C : |z| < 1\}$ and $\rho_n = \sqrt{1-|\alpha_n|^2}$ for all $n \in \Z$. The connection arises from a gauge transformation that changes the transition matrix of the quantum walk (the matrix that takes the wavefunction of the quantum particle from one time step to the next)  into a CMV operator. Operators of the form \eqref{eq:stdcmvdef} have been studied quite heavily, and there are many powerful tools that one may use to study their spectral and dynamical characteristics, so this connection has proved extremely fruitful in recent years. Extended CMV matrices comprise a natural playground for spectral theoretic techniques, as they are canonical unitary analogs of Schr\"odinger operators and Jacobi matrices. Moreover, CMV operators are interesting in their own right, since they naturally arise not just from 1D quantum walks \cite{CGMV,DEFHV,DFO, DFV}, but also in the classical ferromagnetic Ising model \cite{DFLY1, DMY2}, and OPUC (orthogonal polynomials on the unit circle) \cite{S1,S2}.

In particular, there has been some recent success \cite{DEFHV,DFO, DFV, Joye-Merkli} in relating the spreading of a quantum walk with spectral aspects of the associated CMV operator. Concretely, given an initial state $\psi$, one may probe long-time behavior of $\psi(\ell) = U^\ell \psi$ by examining the \emph{spectral measure} $\mu = \mu_{\psi,U}$, defined by
\[
\int_{\partial \D} z^n \, \dd\mu(z)
=
\langle \psi, U^n \psi \rangle,
\quad 
n \in \Z.
\]
The spectral measure can be separated into three parts: the \emph{absolutely continuous part}, which gives no weight to sets of Lebesgue measure zero; the \emph{pure point part}, which is supported on a countable set of points; and the \emph{singular continuous part}, which is supported on a set of Lebesgue measure zero, but which gives no weight to individual points. The absolutely continuous regime typically arises when the quantum coins are arranged in a highly structured manner, for instance if we have a periodic sequence of quantum coins. Pure point spectrum (and dynanamical localization) typically arises in the opposite setting, when the structure of the quantum coins is highly disordered, for instance if the quantum coin at each location is given by an independent, identically distributed random variable on $\U(2)$ \cite{HJS,Joye2011, Joye2012}. Singular continuous spectrum typically arises in the regime in between, for example when the quantum coins exhibit aperiodic order. As such, one rough heuristic is that the more ordered the coins, the more continuous we expect the spectral measure to be.

Through a discrete-time version of the RAGE theorem (Theorem~\ref{t:rage}) one can state how the different spectral types imply conclusions about the localization or spreading behavior of the wavefunction:

\begin{enumerate}
\item If the spectral measure is pure point, then there is a bounded region from which most of the wavepacket never leaves. In other words, the wavepacket remains \emph{localized}.

\item If the spectral measure is singular continuous, the wavepacket eventually leaves any bounded region, if we calculate in a time-averaged sense. It may or may not leave the a compact region without time-averaging. 

\item If the spectral measure is absolutely continuous, most of the wavepacket eventually leaves any bounded region. In fact, it is known that (in dimension one) we have ballistic spreading in a time-averaged sense in this case \cite{DFV}. One can prove ballistic spreading without averaging in time in some specific quantum-walk scenarios in the absolutely continuous regime \cite{AVWW,F2016CMP}, but it is unclear whether one can do this for all 1D quantum walks with purely a.c.\ spectrum.

\end{enumerate}

The second setting gives us the most interesting behavior. For singular continuous spectrum \cite{DFO} shows that we have the possibility of \emph{anomalous transport}, that is where the wavepacket leaves any bounded region but at sub-ballistic speed. This approach actually provided the first explicit demonstration of anomlous transport in a quantum walk with temporally homogeneous coins; for a nonlinear, temporally inhomogeneous example, see \cite{SWH}. To understand better the spreading of the quantum walk in this situation, it will thus be useful to collect more examples of CMV operators where we also have singular continuous spectrum. 

In this paper we produce some examples in the form of limit-periodic CMV operators. We say a sequence is \emph{limit-periodic} if it is a uniform limit of periodic sequences. For example, 
\[
a(m)
=
\sum_{n=1}^\infty
\frac{1}{2^n} \cos\left(\frac{2\pi m}{n!} \right)
\]
is a limit-periodic function of $m$. Our main result is a  proof that for generic limit-periodic CMV operators, the spectrum of the operator is purely singular continuous, supported on a Cantor set of zero Lebesgue measure.

This has direct implications on understanding quantum walks whose coins are arranged according to a such a limit-periodic pattern. Almost-periodic quantum walks have received substantial interest both from the perspective of mathematics~\cite{FOZ} and of physics \cite{RMM,XQTS}.

It is also demonstrated in \cite{DFO} and \cite{DFV} that understanding the fractal characteristics of an operator's singular continuous spectrum can give us more precise information about the rate of spreading of the wavepacket of the corresponding quantum walk (in particular, the two papers provide explicit lower bounds in terms of the Hausdorff dimension of the spectral measure and explicit upper bounds in terms of an associated transfer matrix cocycle). To this end, we include a result, Theorem~\ref{t:zhdim} about the Hausdorff dimension of a limit-periodic CMV operator's spectrum. We show that a dense set of limit-periodic operators have spectrum of Hausdorff dimension zero. Unfortunately, the formulae in \cite{DFO, DFV} do not give us any useful bounds when the spectrum has Hausdorff dimension zero beyond what we already know from the RAGE theorem. However, we see this result as a possible starting point for future research, since discovering limit-periodic CMV operators with suitably continuous spectral measure will indeed produce examples with nice explicit bounds on spreading.

In order to prove the previous theorem, we had to develop new tools concerning the density of states measure for periodic CMV operators. Roughly speaking, the density of states measure is formed by considering the operator $\mathcal E$ restricted to length-$n$ subintervals of $\ell^2(\Z)$. We can assign a weight of $1/n$ to each eigenvalue of the restricted operator to obtain a normalized eigenvalue-counting measure. The density of states (henceforth: DOS) is then the weak$^*$ limit of such measures as $n$ tends to infinity (whenever such a limit exists), and it is an important object from the point of view of statistical mechanics.

We will also include a few results about the density of states measure that have applications beyond that of quantum walks, in the spirit of \cite[Section~2]{AviDam08} and \cite{DeiSim83}. In Theorem~\ref{t:Schur} we demonstrate an inequality relating the DOS with the Schur function of a periodic CMV operator. The Schur function is one of the many CMV analogues of the Weyl-Titchmarsh $m$-function for Jacobi operators. Each CMV matrix on $\ell^2(\N)$ corresponds to a Schur function, an analytic function from the open unit disk to itself. For further information, see \cite[Section~1.3]{S1} for a discussion about the Schur function of a CMV operator. 

This connection is useful because it bridges two different perspectives on the CMV operator:\ the DOS, which emerges from dynamical systems and orthogonal polynomials; and the Schur function, which is central when one instead takes a more complex-analytic approach to the CMV operator. One immediate application of this useful connection is our Theorem~\ref{t:zm}, although the Schur function only appears implicitly.

Furthermore, this theorem is especially interesting because there are actually several different CMV analogues to the Weyl-Titchmarsh $m$-function for Jacobi operators. See \cite{simanalogs} for a discussion of five of these analogues. Because of this, the complex-analytic $m$-function approach is a lot more difficult for the CMV operator compared to the Jacobi operator, and thus having a new tool is especially useful.  In fact, in the course of our proof of Theorem~\ref{t:Schur}, we see that the Schur function plays yet another role played by the $m$-function, namely, by supplying the invariant section of the M\"obius action of the transfer matrix cocycle (though this is already in some sense implicit in, e.g.\ \cite[Equation~(5.14)]{simanalogs}). 

To the best of our knowledge, this connection between the DOS and the  Schur function has not been elucidated for periodic CMV matrices, and hence, it should be of interest independently of the remainder of the paper, particularly among communities who encounter CMV matrices through avenues other than spectral theory (e.g.\ quantum walks, Ising model, and orthogonal polynomials). Concretely, variants of this connection are highly useful in the study of Schr\"odinger operators and Jacobi matrices. 

Our paper is structured as follows. In the first portion, we describe the aforementioned connection between the DOS of a periodic CMV matrix and its Schur function. In the second part of the paper, we describe a few applications of this connection to the spectral theory of CMV matrices with limit-periodic coefficients. Concretely, we show that purely singular continuous zero-measure Cantor spectrum is generic in a suitable class of limit-periodic CMV matrices and we prove optimality of a Craig--Simon-type result for the modulus of continuity of the DOS of a CMV matrix.

\section*{Acknowledgements}
We are grateful to Fritz Gesztesy, Andy Putman, and Maxim Zinchenko for helpful conversations. J.F.\ is grateful to the Simons Center for Geometry and Physics for hospitality during the program ``Between Dynamics and Spectral Theory'', during which portions of this work were completed. D.O.\ is grateful to Christian Remling for a helpful answer \cite{Remling} to a relevant question on MathOverflow.

\section{Results} \label{sec:results}
\subsection{CMV Operators with Phase Factors}
Let us now describe our results more precisely. For some of the applications that we have in mind, the structure of CMV matrices is a little restrictive, so we will work in a slightly more flexible class of operators in which we incorporate a phase factor into the usual CMV construction. More precisely, given $\alpha \in \D \eqdef \set{z \in \C : |z| < 1}$, and $\lambda \in \partial \D$, we define a $2 \times  2$ unitary matrix by
\begin{equation}\label{eq:Theta.defn}
\Theta(\alpha,\lambda)
=
\lambda
\begin{bmatrix}
\overline \alpha & \rho	\\
\rho & -\alpha
\end{bmatrix},
\quad
\rho = \rho_\alpha = \sqrt{1-|\alpha|^2}.
\end{equation}
Suppose given a sequence $\set{(\alpha_n,\lambda_n)}_{n \in \Z} \in (\D \times \partial \D)^{\Z}$. Define unitary operators on $\ell^2(\Z)$ by
\begin{equation}\label{Theta:eq}
\mathcal L
=
\bigoplus_{j \in \Z} \Theta(\alpha_{2j},\lambda_{2j}),
\quad
\mathcal M
=
\bigoplus_{j \in \Z} \Theta(\alpha_{2j+1},\lambda_{2j+1}),
\end{equation}
where $\Theta(\alpha_k,\lambda_k)$ acts on $\ell^2(\{k,k+1\})$. The corresponding \emph{phased CMV matrix} (\emph{pCMV matrix}) is defined by $\E = \E_{\alpha,\lambda} = \mathcal L\M$. It is straightforward to check that $\E$ enjoys the following matrix representation with respect to the standard basis of $\ell^2(\Z)$:
\begin{equation} \label{eq:pcmv:def}
\E
=
\begin{bmatrix}
\ddots & \ddots & \ddots & \ddots &&& \\
& \lambda_2 \lambda_1 \overline{\alpha_2}\rho_1 
& -\lambda_2 \lambda_1 \overline{\alpha_2}\alpha_1 
& \lambda_3 \lambda_2\overline{\alpha_3} \rho_2 
& \lambda_3 \lambda_2 \rho_3\rho_2 &\\
& \lambda_2 \lambda_1 \rho_2\rho_1 
& -\lambda_2 \lambda_1 \rho_2\alpha_1 
& -\lambda_3 \lambda_2 \overline{\alpha_3}\alpha_2 
& -\lambda_3 \lambda_2 \rho_3\alpha_2 & \\
&& \ddots & \ddots & \ddots & \ddots
\end{bmatrix},
\end{equation}
where the terms of the form $-\lambda_n\lambda_{n-1} \overline{\alpha_n}\alpha_{n-1}$ comprise the main diagonal, i.e.,
\[
\langle \delta_n, \E \delta_n \rangle 
= 
-\lambda_n\lambda_{n-1} \overline{\alpha_n}\alpha_{n-1} 
\text{ for } n \in \Z.
\] 
Of course, ordinary CMV matrices are a special case of these operators, obtained by setting $\lambda_n \equiv 1$. In fact, each operator of the form \eqref{eq:pcmv:def} is unitarily equivalent to a standard CMV matrix. Concretely, define a sequence $\{\gamma_n\}_{n \in \Z}$ by $\gamma_0 = \gamma_1 = 1$, and put
\[
\gamma_{2n+2} = \lambda_{2n+1}\lambda_{2n}\gamma_{2n},
\quad
\gamma_{2n+1} = \overline{\lambda_{2n}\lambda_{2n-1}}\gamma_{2n-1}.
\]
Then one can readily verify that
\begin{equation} \label{eq:gaugedef}
\Gamma \delta_n = \gamma_n \delta_n,
\quad n \in \Z
\end{equation}
defines a unitary operator with the property that $\Gamma \E_{\alpha,\lambda} \Gamma^*$ is an ordinary CMV matrix $\E' = \E_{\alpha'}$ with coefficients
\begin{equation}\label{eq:gauge.alpha'}
\alpha_{2n-1}'
=
\lambda_{2n}\lambda_{2n-1} \gamma_{2n+1}\overline{\gamma_{2n}} \alpha_{2n-1},
\quad
\alpha_{2n}'
=
\overline{\lambda_{2n}\lambda_{2n-1}\gamma_{2n}} \gamma_{2n-1} \alpha_{2n},
\quad n \in \Z.
\end{equation}

We can also express $\alpha'$ purely in terms of $\lambda$ and $\alpha$ as follows. For integers $n>0$, we have
\[
\alpha'_{2n-1}=\overline{\lambda_{2n-1}\lambda_{2n-2}^2 \lambda_{2n-3}^2\ldots \lambda_1^2\lambda_0}\alpha_{2n-1},
\]
\[
\alpha'_{2n}=\overline{\lambda_{2n}\lambda_{2n-1}^2 \lambda_{2n-2}^2\ldots \lambda_1^2\lambda_0}\alpha_{2n}.
\]
There exist similar formulae for $\alpha_k'$ when $k\leq 0$.

In the present paper, we are most interested in the case in which $\E$ is limit-periodic.
\begin{defi}
Let us say that $\E$ is \emph{periodic} if there exists $q \in \Z_+$ (the \emph{period}) for which
\begin{equation} \label{eq:percmvdef}
\alpha_{n+q} = \alpha_n,
\;
\lambda_{n+q} = \lambda_n,
\text{ for all } n \in \Z.
\end{equation}
For each $q \in \Z_+$ and $r \in (0,1)$, denote by $\P(q,r)$ the set of all periodic pCMV matrices of period $q$ with $\|\alpha\|_\infty \leq r$, and denote by
\[
\P(r)
=
\bigcup_{q=1}^\infty \P(q,r)
\]
the space of all periodic pCMV matrices with $\|\alpha\|_\infty \leq r$. We will say that $\E$ is \emph{limit-periodic} if there exist periodic matrices $\E_n$ such that $\E_n \to \E$ in operator norm. Given $0<r<1$, denote by $\LP(r)$ the set of limit-periodic $\E$ for which $\|\alpha\|_\infty \leq r$, i.e., the closure of $\P(r)$ in the operator-norm topology; finally, let
\[
\LP
=
\bigcup_{0 < r < 1} \LP(r).
\]
Notice that $\LP(r)$ is a complete metric space with respect to the metric $d(\E,\E') = \|\E - \E'\|$ for each $r \in (0,1)$, and similarly for $\LP$.
\end{defi}

There is considerable interest in determining the spectrum and spectral type of almost-periodic Schr\"odinger operators, Jacobi matrices, and CMV matrices. In this paper, we prove that zero-measure Cantor spectrum with purely singular continuous spectral type is topologically generic in the class of limit-periodic pCMV operators.

\begin{theorem}\label{t:zm}
For Baire-generic $\E \in \LP$, $\E$ has purely singular continuous spectrum supported on a Cantor set of zero Lebesgue measure.
\end{theorem}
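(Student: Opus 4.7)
The plan is to apply the Baire category theorem in the complete metric space $\LP$. I identify two properties to promote to dense $G_\delta$ sets: (i) $|\sigma(\E)|=0$, and (ii) $\E$ has no eigenvalues. Before entering the categorical argument, note that (i) rules out any absolutely continuous spectrum, since an a.c.\ spectral measure must charge a set of positive Lebesgue measure, and (ii), combined with the fact that isolated points of the spectrum of a unitary operator are always eigenvalues, makes $\sigma(\E)$ perfect. A perfect compact subset of $\partial \D$ with zero arc-length measure is automatically totally disconnected, hence a Cantor set. So the two properties together yield the full conclusion of the theorem (the singular continuous part is automatically nonempty, since $\E$ is unitary and thus has nonempty spectrum).

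For (i), I would define $A_n = \{\E \in \LP : |\sigma(\E)| < 1/n\}$. Openness is routine from upper semicontinuity of the spectrum: cover $\sigma(\E)$ by finitely many arcs of total length only slightly above $|\sigma(\E)|$; for $\|\E - \E'\|$ small, $\sigma(\E')$ lies in the same union. Density is the crux, and it is here that Theorem~\ref{t:Schur} plays its essential role. Given $\E_0 \in \P(q,r)$ and $\varepsilon > 0$, I would construct a periodic perturbation $\E_1 \in \P(Nq,r')$ for some large $N$ and $r' \in (r,1)$, with $\|\E_0 - \E_1\| < \varepsilon$ and $|\sigma(\E_1)| < \varepsilon$. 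Viewed as a period-$Nq$ operator, $\E_0$ has its spectrum decomposed into $Nq$ closed sub-bands (with many adjacent pairs touching). A small perturbation of the Verblunsky and phase parameters opens every closed gap, and Theorem~\ref{t:Schur} expresses the DOS through the Schur function in a way that provides quantitative control on the width of each individual sub-band. For $N$ large, the total arc length of the $Nq$ sub-bands can be driven below $\varepsilon$.

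For (ii), I would exhibit a dense $G_\delta$ set $B \subseteq \LP$ of operators without eigenvalues via a Gordon-type argument specialized to limit-periodic Verblunsky coefficients. Concretely, let $B$ consist of those $\E$ admitting periodic approximants $\E^{(k)} \in \P(q_k)$ with $\|\E - \E^{(k)}\| < \exp(-C q_k)$ for a suitable constant $C$ and some $q_k \to \infty$. Standard Gordon-type reasoning applied to the pCMV transfer matrix cocycles on adjacent period blocks of the approximants contradicts the $\ell^2$-decay of any putative eigenfunction of $\E$, showing that every $\E \in B$ has empty point spectrum. Since periodic pCMV matrices are dense in $\LP$ by definition and each fast-approximation condition is open, $B$ is a dense $G_\delta$.

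Applying Baire to $B \cap \bigcap_n A_n$ in $\LP$ then yields a dense $G_\delta$ of $\E$ satisfying all the requirements, and together with the reductions above this proves the theorem. The principal obstacle is the density half of step (i): producing, for every periodic pCMV, small periodic perturbations that simultaneously open all closed gaps while keeping the total arc length of the spectrum small. This is the estimate that relies essentially on the new DOS--Schur identity of Theorem~\ref{t:Schur}. The Gordon-type step (ii) and the topological reductions are, by contrast, fairly standard adaptations of the Schr\"odinger/Jacobi theory to the CMV setting.
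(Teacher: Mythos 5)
Your overall architecture matches the paper's: Baire category in the complete metric space $\LP$, with one dense $G_\delta$ for zero-measure spectrum and one (Gordon-type) dense $G_\delta$ for absence of eigenvalues, plus the standard reductions (zero measure kills the a.c.\ part; no eigenvalues plus no isolated points makes the spectrum a zero-measure perfect, hence Cantor, set). The Gordon step and the topological reductions are fine and are exactly what the paper does.

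The genuine gap is in the step you yourself flag as the crux: density of the sets $A_n$. Your sketch is ``view $\E_0$ as $Nq$-periodic, open all closed gaps by a small perturbation, and for $N$ large the total arc length of the $Nq$ sub-bands can be driven below $\varepsilon$.'' That last sentence does not follow. Opening the gaps of a fixed periodic operator by a small perturbation changes the total Lebesgue measure of its spectrum only slightly (by continuity of the band edges), no matter how large $N$ is; each of the $Nq$ bands has length roughly $\Leb(\sigma(\E_0))/(Nq)$, and their total length stays near $\Leb(\sigma(\E_0))$. The DOS lower bound (whether in the form of Theorem~\ref{t:Schur} or Theorem~\ref{t:ids:HSnorm}) gives $\Leb(B)\lesssim \pi q\big/\inf_B\sum_j\|M_{2j}\|_2^2$ for each band $B$ via $\nu(B)=1/q$, but this is only useful if you can force the conjugating matrices $M_{2j}$ (equivalently, force $|s_{2j}|$ close to $1$) to be \emph{large} on all of $\sigma(\E')$. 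The missing construction, which is the heart of Lemma~\ref{l:lp:smallspec}, is: (i) after opening gaps so each band is shorter than $\delta/6$, produce finitely many nearby periodic operators $U_{-k},\dots,U_k$ whose spectra have empty intersection, by rotating $\E'$ through small phases $e^{ij\gamma}$ --- this is precisely where the pCMV phase freedom is indispensable, since $e^{ij\gamma}\E'$ stays in $\P(n'q,r)$ and within $\delta/2$ of $\E'$; (ii) concatenate the coefficient blocks of these operators into one long-period operator $\widetilde\E$, so that at every $z\in\sigma(\widetilde\E)$ at least one constituent block has Lyapunov exponent bounded below by some $\eta>0$; (iii) conclude that the transfer matrices over that block, hence the matrices $M_{2j}$, are exponentially large in the period, and feed this into the DOS bound together with $\nu(B)=1/q$ to get exponentially thin bands. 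Without (i)--(iii) the density of $A_n$ is not established, and the theorem does not follow.

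A minor additional remark: you invoke Theorem~\ref{t:Schur}, which is the half-line OPUC reformulation; the estimate actually used in the whole-line construction is Theorem~\ref{t:ids:HSnorm} together with Proposition~\ref{p:nu=1/q}. This is a question of attribution rather than substance, since the two statements encode the same inequality.
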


By suitably using the construction from the previous theorem, one can also construct limit-periodic pCMV matrices with spectra of zero Hausdorff dimension.

\begin{theorem} \label{t:zhdim}
There exists a dense set of $\E \in \LP$ such that $\E$ has purely singular continuous spectrum supported on a Cantor set of zero Hausdorff dimension.
\end{theorem}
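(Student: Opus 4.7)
The plan is to run an inductive construction that parallels the one proving Theorem~\ref{t:zm} but with substantially tighter quantitative control at each step, so as to drive the Hausdorff dimension of the limit spectrum to zero. Since $\P(r)$ is dense in $\LP(r)$ and $\LP = \bigcup_{r \in (0,1)} \LP(r)$, it suffices to show that every $\E_0 \in \P(q_0, r)$ admits operator-norm approximants in $\LP(r')$ (with $r' \in (r,1)$) whose spectra have zero Hausdorff dimension; varying $\E_0$ and the approximation radius then produces the desired dense set.

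Fix $\varepsilon > 0$. I would inductively choose $\E_k \in \P(q_k, r')$ with $q_k \mid q_{k+1}$, $\|\E_{k+1} - \E_k\| < \tau_k$, and every band of $\sigma(\E_k)$ of length at most $\ell_k$, where the parameters $q_k, \tau_k, \ell_k$ satisfy
\[
\sum_{k \geq 0} \tau_k < \varepsilon, \qquad \lim_{k \to \infty} q_k \bigl( \ell_k + 2\textstyle\sum_{m \geq k}\tau_m\bigr)^{1/k} = 0.
\]
Concretely one can take $\tau_k = 2^{-k-2}\varepsilon\, e^{-k^2}$, $\ell_k = e^{-k^2}$, and allow $q_k \leq e^k$. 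The limit $\E_\infty := \lim_k \E_k$ exists in operator norm, lies in $\LP(r')$, and satisfies $\|\E_\infty - \E_0\| < \varepsilon$. Because spectra of unitaries are $1$-Lipschitz in the operator norm, $\sigma(\E_\infty)$ is covered by at most $q_k$ arcs each of length at most $\ell_k + 2\sum_{m \geq k} \tau_m$, so for any fixed $d > 0$ and all sufficiently large $k$ (so that $1/k \leq d$ and these arcs have diameter $\leq 1$),
\[
\Hd^d_\infty(\sigma(\E_\infty)) \leq q_k \bigl(\ell_k + 2\textstyle\sum_{m \geq k}\tau_m\bigr)^d \leq q_k \bigl(\ell_k + 2\textstyle\sum_{m \geq k}\tau_m\bigr)^{1/k} \to 0.
\]
Hence $\Hd^d(\sigma(\E_\infty)) = 0$ for every $d > 0$. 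The remaining spectral characteristics are inherited verbatim from Theorem~\ref{t:zm}: zero Hausdorff dimension forces zero Lebesgue measure and hence no absolutely continuous spectrum; eigenvalues are ruled out by the limit-periodic ergodic argument used there; and the Cantor structure follows from nowhere density of $\sigma(\E_\infty)$ together with the absence of isolated points, which passes to the operator-norm limit.

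The inductive step reduces to a gap-opening lemma that I expect to be the main obstacle: given $\E \in \P(q, r)$, any $r' \in (r, 1)$, and any $\eta, \delta > 0$, produce $M \in \Z_+$ and $\E' \in \P(Mq, r')$ with $\|\E' - \E\| < \eta$ and every band of $\sigma(\E')$ of length at most $\delta$. Regarded as $Mq$-periodic, $\E$ itself has discriminant $\Delta_{Mq}^{(\E)}(z) = 2\, T_M\bigl(\Delta_q^{(\E)}(z)/2\bigr)$ (with $T_M$ the $M$-th Chebyshev polynomial), which oscillates $M$ times between $\pm 2$ on each band of $\E$. A small perturbation of the $Mq$-periodic coefficient sequence then produces a spectrum $\{z \in \partial\D : |\Delta_{Mq}^{(\E')}(z)| \leq 2\}$ that splits each old band into $\approx M$ subarcs, each of length of order $\|\E' - \E\|$ divided by the $O(M)$-sized derivative of $\Delta_{Mq}^{(\E)}$; choosing $M$ large (depending on $\delta$ and the band geometry of $\E$) together with a generic perturbation direction in the parameter space $\D^{Mq} \times (\partial\D)^{Mq}$, all new bands can be made shorter than $\delta$. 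The delicate point is to certify that the requisite perturbation sits inside the pCMV class \eqref{eq:pcmv:def}, respects $\|\alpha\|_\infty \leq r'$, and yields quantitative lower bounds on the derivative of the discriminant near its critical points so that the new subarcs really do shrink as advertised. The gauge relations \eqref{eq:gauge.alpha'} together with the transfer-matrix and discriminant machinery developed earlier in the paper (in the proofs of Theorems~\ref{t:Schur} and~\ref{t:zm}) should supply the necessary tools; once the gap-opening lemma is in hand, the diagonalization above yields the theorem.
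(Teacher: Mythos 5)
Your diagonalization skeleton (periodic approximants with rapidly shrinking bands, covering $\sigma(\E_\infty)$ by the $q_k$ fattened bands of $\sigma(\E_k)$, and sending $k\to\infty$ for each fixed exponent $d>0$) is exactly the paper's strategy. But the load-bearing step — the quantitative lemma producing the approximants — is both unproven in your write-up (you flag it yourself as ``the main obstacle'') and, as sketched, too weak to close the argument. The Chebyshev/generic-perturbation heuristic you describe splits a band of length $L$ into $\approx M$ subbands, and since each subband carries density-of-states weight exactly $1/(Mq)$ (Proposition~\ref{p:nu=1/q}) while the density $\dd\nu/\dd\tau$ is only bounded below by a universal constant in general (Theorem~\ref{t:ids:HSnorm} with $\|M_{2\ell}\|_2^2\ge 2$), this route gives band lengths of order $1/(Mq)$, i.e.\ of order one over the new period $q'$. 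That is not enough: covering by $q'$ arcs of length $C/q'$ gives $\sum \Leb(I)^d \approx C^d (q')^{1-d}\to\infty$ for $d<1$, so you would conclude nothing about the Hausdorff dimension. What is needed is band length \emph{exponentially} small in the period, which is precisely Lemma~\ref{l:lp:smallspec}. Its proof is not a perturbative gap-opening computation but a concatenation argument: after opening all gaps (Lemma~\ref{l:opengaps}), one rotates the spectrum to obtain a finite family $\E_1',\dots,\E_\ell'$ near $\E$ whose spectra have empty intersection, concatenates their coefficient blocks, and uses positivity of the Lyapunov exponent of at least one block at every energy to force $\|M_{2\ell}\|_2^2\ge e^{c nq}$ in Theorem~\ref{t:ids:HSnorm}. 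Since that lemma is already established earlier in the paper, the correct move is to invoke it; trying to rebuild a weaker substitute from the discriminant alone would fail.

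Two secondary points. First, your claim that eigenvalues ``are ruled out by the limit-periodic ergodic argument used there'' does not apply to your specific $\E_\infty$: Theorem~\ref{t:zm} excludes point spectrum only for a generic ($G_\delta$) set, whereas here you must guarantee it for the particular limit you construct. The paper does this by building a Gordon condition into the convergence rate (requiring $\varepsilon_n \le \tfrac12 n^{-q_{n-1}}$ so that $\|\E_n - \E_\infty\| < n^{-q_n}$), and your $\tau_k$ are not chosen to ensure this. Second, your stated sufficient condition $q_k(\ell_k + 2\sum_{m\ge k}\tau_m)^{1/k}\to 0$ actually fails for your own parameters ($e^k\cdot(2e^{-k^2})^{1/k}\to 1$); the direct estimate $q_k L_k^d\to 0$ for each fixed $d>0$ is what you want, and it does hold provided the periods do not grow faster than the bands shrink — which again is only guaranteed by the exponential bound of Lemma~\ref{l:lp:smallspec}, not by the $O(1/q')$ bound your sketch would deliver.
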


We should mention here that Theorems~\ref{t:zm} and \ref{t:zhdim} have analogs for discrete Schr\"odinger operators \cite{Avila} and for continuous Schr\"odinger operators \cite{DFL2015}. While we follow the approach of these papers, the unitary setting provides new challenges. In particular, the connection between the DOS and the Schur function, the most crucial component of the proof requires a substantially more complicated approach in the unitary setting, due to how differently $m$-functions for the Schr\"odinger operator and CMV operator behave. 

\subsection{Quantum Walks}

Now, let us precisely describe quantum walks on $\Z$ and how they may be related to CMV matrices. The state space for such quantum walks is 
\[
\Hi \eqdef \ell^2(\Z) \otimes \C^2,
\]
where the first factor corresponds to the spatial variable, and the second factor corresponds to the internal variable, the ``spin''. The elementary tensors of the form $\delta_n^\pm \eqdef \delta_n \otimes e_\pm$ with $n \in \Z$ comprise an orthonormal basis of $\Hi$, where $\{e_+,e_-\}$ denotes the usual basis of $\C^2$:
\[
e_+
=
\begin{bmatrix}
1 \\ 0
\end{bmatrix},
\quad
e_-
=
\begin{bmatrix}
0 \\ 1
\end{bmatrix}.
\]
A time-homogeneous quantum walk scenario is completely specified by a choice of local interactions given by $2 \times 2$ unitaries, called the \emph{coins}:
\begin{equation}\label{e.timehomocoins}
Q_{n}
=
\begin{bmatrix}
q^{11}_{n} & q^{12}_{n} \\
q^{21}_{n} & q^{22}_{n}
\end{bmatrix}
\in \U(2), \quad n \in \Z.
\end{equation}
To avoid degeneracies, we will always assume that $q_n^{11}, q_n^{22} \neq 0$. The \emph{update rule} of the quantum walk, which updates the system by one time step, is given as $U = SC$, where $S$ is the conditional shift $S:\delta_n^\pm \mapsto \delta_{n \pm 1}^\pm$, and $C$ applies the coins coordinatewise:
\[
C\delta_n^+
=
q_n^{11} \delta_n^+ + q_n^{21} \delta_n^-,
\quad
C\delta_n^-
=
q_n^{12} \delta_n^+ + q_n^{22} \delta_n^-.
\]
Thus, $U$ acts as follows on pure states:
\begin{equation} \label{e.updaterule}
\delta_{n}^+
\mapsto
  q^{11}_{n} \delta_{n+1}^+ 
+ q^{21}_{n} \delta_{n-1}^-,
\quad
\delta_n^-
\mapsto
  q^{12}_{n} \delta_{n+1}^+
+ q^{22}_{n} \delta_{n-1}^- 
\end{equation}
Since $S$ and $C$ are clearly unitary, this defines a unitary operator $U$ on $\mathcal{H}$. Next, order the basis of $\mathcal{H}$ as $\varphi_{2m} = \delta_m^+$, $\varphi_{2m+1} = \delta_m^-$ for $m \in \Z$. In this ordered basis, the matrix representation of $U : \Hi \to \Hi$ is precisely
\begin{equation}\label{e.umatrixrep}
U
=
\begin{bmatrix}
\ddots & \ddots & \ddots & \ddots &&&&& \\
 & 0 & 0 & q_1^{21} & q_1^{22} & && \\
 & q_0^{11} & q_0^{12} & 0 & 0 & && \\
 &&& 0 & 0 & q_2^{21} & q_2^{22}& \\
 &&& q_1^{11} & q_1^{12} & 0 & 0 & \\
 &&&& \ddots & \ddots &  \ddots & \ddots
\end{bmatrix},
\end{equation}
which follows immediately from \eqref{e.updaterule}; compare \cite[Section~4]{CGMV}.

We can connect quantum walks to CMV matrices using the following observation. If all Verblunsky coefficients with even index vanish, the extended CMV matrix in \eqref{eq:stdcmvdef} becomes
\begin{equation}\label{e.ecmvoddzero}
\mathcal{E}
=
\begin{bmatrix}
\ddots & \ddots & \ddots & \ddots &&&&& \\
& 0 & 0 & \overline{\alpha_3}  & \rho_3 &&& \\
& \rho_1 & - \alpha_1 & 0 & 0 &  &&  \\
&&& 0 & 0 & \overline{\alpha_5}  & \rho_5 & \\
&&& \rho_3 & - \alpha_3 & 0 & 0  & \\
&&&& \ddots & \ddots &  \ddots & \ddots
\end{bmatrix}.
\end{equation}
The matrix in \eqref{e.ecmvoddzero} strongly resembles the matrix representation of $U$ in \eqref{e.umatrixrep}. Note, however, that $\rho_n > 0$ for all $n$, so \eqref{e.umatrixrep} and \eqref{e.ecmvoddzero} may not match perfectly whenever $q_n^{jj}$ fails to be real and positive. However, this can be easily resolved by a gauge transformation as in \eqref{eq:gaugedef}.

From this correpondence, as an immediate consequence of Theorem~\ref{t:zm} and Theorem~\ref{t:rage}, we obtain the following result.

\begin{coro}\label{coro:qw}
For a Baire-generic class of two-sided one-dimensional quantum walks with limit-periodic coins of the form $\Theta(\alpha,\lambda)$, the wavefunction of the particle behaves as in situation {\rm(}b{\rm)} of Theorem~\ref{t:rage}. That is, it escapes any bounded region in a time-averaged sense.
\end{coro}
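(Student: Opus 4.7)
The plan is to combine Theorem~\ref{t:zm} with the singular-continuous case of the RAGE theorem, Theorem~\ref{t:rage}(b), using the unitary equivalence between the update operator of a quantum walk with coins of the form $\Theta(\alpha_n,\lambda_n)$ and a pCMV matrix, as set up in the discussion preceding the corollary.

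First, I would make the correspondence precise. For coins $Q_n = \Theta(\alpha_n,\lambda_n)$, the entries appearing in \eqref{e.timehomocoins} read $q_n^{11} = \lambda_n\overline{\alpha_n}$, $q_n^{12} = q_n^{21} = \lambda_n\rho_n$, and $q_n^{22} = -\lambda_n\alpha_n$. Substituting these into \eqref{e.umatrixrep} and comparing with \eqref{e.ecmvoddzero}, as explained in the paragraph following \eqref{e.ecmvoddzero}, the update operator $U = SC$ is unitarily equivalent, via a gauge transformation of the type \eqref{eq:gaugedef}, to a pCMV matrix $\E_{\alpha,\lambda} \in \LP$ whose coefficient sequence is built directly from $\{(\alpha_n,\lambda_n)\}_{n\in\Z}$. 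This correspondence is a homeomorphism (in fact an isometry) in the operator-norm topology, so limit-periodic coin sequences pass to limit-periodic pCMV matrices and Baire-generic subsets are preserved under the identification.

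Next, one applies Theorem~\ref{t:zm} to the image of this embedding inside $\LP$: one inherits a dense $G_\delta$ set of coin sequences for which the associated $U$ has purely singular continuous spectrum supported on a zero-measure Cantor set. For any such $U$ and any nonzero initial state $\psi \in \Hi$, the spectral measure $\mu_{\psi,U}$ is then purely singular continuous, so $\psi$ lies entirely in the singular continuous subspace of $U$. Invoking Theorem~\ref{t:rage}(b) yields exactly the stated conclusion: the wavepacket $U^\ell \psi$ escapes every bounded region of $\Z$ in a time-averaged sense.

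The only point requiring care is the verification that the construction underlying Theorem~\ref{t:zm} can be \emph{localized} to the subclass of pCMV matrices arising from quantum-walk coin sequences of the specified form, so that ``Baire-generic'' refers to the quantum-walk parameter space rather than the ambient $\LP$. Since the periodic approximants built in the proof of Theorem~\ref{t:zm} can themselves be chosen of quantum-walk type — simply by perturbing the coin sequence through periodic sequences with coins still of the form $\Theta(\alpha,\lambda)$ — this presents no genuine obstacle, and is really just a bookkeeping step one must carry out to transfer the category statement across the correspondence.
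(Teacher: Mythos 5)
Your proposal is correct and follows the same route the paper takes, which treats the corollary as an immediate consequence of the coin--pCMV correspondence, Theorem~\ref{t:zm}, and Theorem~\ref{t:rage}(b). The one point you flag --- that genericity must be transferred from $\LP$ to the subclass of pCMV matrices with alternating vanishing Verblunsky coefficients arising from quantum walks --- is exactly the detail the paper elides, and your observation that the periodic approximants (phase perturbations and block concatenations) stay within that subclass resolves it correctly.
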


\subsection{Density of States}
In addition, our construction yields some interesting consequences for the density of states measures for limit-periodic operators. Let us briefly recall how the density of states is defined. Given a CMV operator $\E$, one may consider its restriction to finite boxes with Dirichlet boundary conditions, i.e., $\E^{(n)}_\Dir = \chi_n \E \chi_n^*$, where $\chi_n: \ell^2(\Z) \to \ell^2([-n,n] \cap \Z)$ denotes the canonical projection. Then $\E^{(n)}_\Dir$ has $2n+1$ eigenvalues (counting multiplicities), so we denote by $\nu_n$ the normalized eigenvalue counting measure. If $\nu_n$ has a weak$^*$ limit as $n \to \infty$, then we denote it by $\nu$ and call it the \emph{density of states measure} (henceforth: DOS) of $\E$. 

In general, this weak limit need not exist, but it does exist in almost all models of interest -- a broad class of models for which the DOS exists are \emph{ergodic} CMV matrices (sometimes called \emph{stochastic} CMV matrices). Concretely, let $S:\Omega \to \Omega$ be a invertible bi-measurable transformation of a Borel probability space $\Omega$. Given measurable functions $f:\Omega \to \D$ and $g:\Omega \to \partial \D$, we may define a family of operators $\E(\omega) = \E_{\alpha(\omega),\lambda(\omega)}$ by
\[
\alpha_n(\omega)
=
f(S^n\omega),
\quad
\lambda_n(\omega)
=
g(S^n \omega),
\quad
n \in \Z, \; \omega \in \Omega.
\]

Then, if $\mu$ is an $S$-ergodic measure and $f$ is bounded away from $\partial \D$ in the sense that $|f| \leq C < 1$ $\mu$-almost everywhere, then the DOS of $\E(\omega)$ exists for $\mu$-a.e.\ $\omega \in \Omega$ by straightforward arguments using ergodicity. If $\Omega$ is additionally a compact metric space, $f$ and $g$ are continuous, and $S$ is a uniquely ergodic homeomorphism from $\Omega$ to itself, then the DOS of $\E(\omega)$ exists for \emph{all} $\omega \in \Omega$; this follows from the equivalence of unique ergodicity of $S$ and uniform convergence of Birkhoff averages of continuous functions to constants (see, e.g.\ \cite[Theorem~6.19]{Walters}).

For the DOS, we will prove a Craig--Simon-type result on the modulus of continuity; compare \cite{CS83CMP}. Here, and throughout the paper, we use $\Leb(A)$ to denote the one-dimensional Lebesgue measure of $A \subset \partial \D$ (which we also refer to as the arc length measure). In particular, we have $\Leb(\partial \D) = 2\pi$.

\begin{theorem} \label{t:cmv:craigsim}
The DOS of an ergodic family of extended pCMV matrices is log-H\"older continuous. That is, there exists a constant $c > 0$ for which
\[
\nu(A)
\le 
-c\left( \log \Leb(A) \right)^{-1}
\]
for every arc $A \subseteq \partial \D$ with $\Leb \, A<1/2$, where $\Leb$ denotes the arc length measure on $\partial \D$.
\end{theorem}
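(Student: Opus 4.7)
The plan is to reduce the log-H\"older estimate to a lower bound on the logarithmic potential of the DOS, which in turn follows from non-negativity of the Lyapunov exponent together with a Thouless-type identity supplied by Theorem~\ref{t:Schur}. The overall template is the classical Craig--Simon argument in the self-adjoint setting, but with potentials on $\R$ replaced by potentials on $\partial\D$.

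First I would define the Szeg\H{o}-type transfer matrix cocycle of the family $\{\E(\omega)\}$ at spectral parameter $z\in\partial\D$, along with its Lyapunov exponent
\[
L(z)
=
\lim_{n\to\infty}\frac{1}{n}\int_\Omega\log\|T_n(z,\omega)\|\,d\mu(\omega).
\]
Non-negativity $L\ge 0$ is immediate: every single-step transfer matrix has determinant of modulus one, hence operator norm at least one, so each $\log\|T_n(z,\omega)\|$ is non-negative. Next I would invoke a Thouless-type identity
\[
L(z)
=
\int_{\partial\D}\log|z-w|\,d\nu(w)+C_0
\qquad\text{for Leb-a.e.\ } z\in\partial\D,
\]
where $C_0$ is an explicit constant, morally of the form $-\int_\Omega\log\rho_{\alpha(\omega)}\,d\mu(\omega)$. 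In the periodic case this is essentially the content of (or an easy consequence of) Theorem~\ref{t:Schur}, which expresses the DOS in terms of the Schur function; the ergodic version follows by Birkhoff's theorem applied to $\log\rho$ together with the weak-$*$ convergence of the finite-volume DOS.

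Combining these two inputs, $\int_{\partial\D}\log|z-w|\,d\nu(w)\ge -C_0$ for a.e.\ $z\in\partial\D$. Since $z\mapsto\log|z-w|$ is harmonic in $\D$ for each fixed $w\in\partial\D$, Fubini plus the Poisson integral representation give, for every $z\in\D$,
\[
\int_{\partial\D}\log|z-w|\,d\nu(w)
=
\int_0^{2\pi} P_z(e^{i\phi})\,g(\phi)\,\frac{d\phi}{2\pi}
\ge
-C_0,
\]
where $g(\phi)=\int_{\partial\D}\log|e^{i\phi}-w|\,d\nu(w)\ge -C_0$ a.e. Fixing an arc $A\subset\partial\D$ of length $\delta\in(0,1/2)$ centered at $z_0\in\partial\D$, I would test the inequality at the radially interior point $z^\star=(1-\delta)z_0\in\D$. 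A short geometric check gives $|z^\star-w|\le 2\delta$ for $w\in A$ and $|z^\star-w|\le 2$ for all $w\in\partial\D$, so splitting the integral yields
\[
-C_0
\le
\int_A\log|z^\star-w|\,d\nu(w)+\int_{\partial\D\setminus A}\log|z^\star-w|\,d\nu(w)
\le
\nu(A)\log(2\delta)+\log 2.
\]
Rearranging gives $\nu(A)\le(C_0+\log 2)/\log\bigl(1/(2\delta)\bigr)$, which absorbs into the claimed log-H\"older bound for a suitable $c>0$.

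The principal obstacle is establishing the Thouless identity in Step~2 with the correct constant and under control of the ergodic averaging. For Jacobi and Schr\"odinger operators this is standard via Herglotz $m$-functions, but the unitary setting is more delicate because of the several inequivalent CMV analogs of the $m$-function; that is precisely why Theorem~\ref{t:Schur}, which links the DOS to the Schur function, is pivotal here. A secondary point of care is the passage from the pointwise inequality on $\partial\D$ to the bound at interior points $z^\star\in\D$, which I handle above by observing that $\log|z-w|$ is harmonic in $\D$ so the logarithmic potential is the Poisson extension of its boundary values, and Poisson extension preserves one-sided bounds.
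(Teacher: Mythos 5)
Your argument is essentially the paper's: nonnegativity of the Lyapunov exponent plus the Thouless formula gives a uniform lower bound on the logarithmic potential of $\nu$, and splitting that potential over the arc and its complement yields the log-H\"older bound (the paper works directly at the boundary point $z$, where the cited Thouless formula is valid for all $z\in\C$, and uses a three-piece splitting, but these are cosmetic differences from your interior-point/Poisson-extension variant). One correction on sourcing: the Thouless formula for ergodic CMV matrices is not a consequence of Theorem~\ref{t:Schur} (which is a lower bound for the DOS density on bands of periodic operators); the paper simply quotes it from \cite[Equation~(10.16.5)]{S2} with $\log\rho_\infty=\tfrac12\int_\Omega\log(1-|f(\omega)|^2)\,\dd\mu(\omega)$, so the ``principal obstacle'' you identify is already available in the literature and needs no derivation here.
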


Our construction of limit-periodic pCMV matrices with very thin spectra allows us to show that the previous result is optimal in general.

\begin{theorem} \label{t:olhcids}
Suppose that  $h:\R_+ \to \R_+$ is any increasing function that satisfies
\begin{equation} \label{e:fasterthanlogdecay}
\lim_{\delta \to 0^+} h(\delta) \log \left( \delta^{-1} \right) = 0.
\end{equation}
Then there exists a dense set of $\E \in \LP$ so that the associated density of states $\nu$ satisfies
\begin{equation} \label{eq:olhcids:conclusion}
\limsup_{z \to z_0}
\frac{\nu([z,z_0])}{h\left( |z-z_0| \right)}
=
+ \infty
\end{equation}
for any $z_0 \in \sigma(\E)$, where $[z,z_0]$ denotes the shortest closed subarc of $\partial \D$ containing $z$ and $z_0$. In particular, for such $\E$, the DOS is not $\alpha$-H\"older continuous for any $\alpha > 0$.
\end{theorem}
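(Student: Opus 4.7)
The plan is to establish this by refining the inductive construction underlying Theorem~\ref{t:zm}. Since $\P(r)$ is dense in $\LP(r)$ and $\LP = \bigcup_{0 < r < 1} \LP(r)$, it suffices to show that for each $\E^{(0)} \in \P(r)$ and each $\varepsilon > 0$ there exists $\E \in \LP(r)$ with $\|\E - \E^{(0)}\| < \varepsilon$ whose DOS satisfies \eqref{eq:olhcids:conclusion}; a countable family of such $(\E^{(0)}, \varepsilon)$ then yields the desired dense subset. Using the hypothesis \eqref{e:fasterthanlogdecay}, I fix the target scale $\Lambda(q) \eqdef e^{-q}$, which satisfies $q \, h(\Lambda(q)) \to 0$ as $q \to \infty$ since $\log(\Lambda(q)^{-1}) = q$. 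This $\Lambda(q)$ is the band width that the period-$q$ approximant must attain in order to defeat any modulus of continuity governed by $h$.

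Starting from $\E_0 \eqdef \E^{(0)} \in \P(q_0, r)$, I would inductively build periodic pCMV matrices $\E_n \in \P(q_n, r)$ with $q_n \mid q_{n+1}$ and $q_n \to \infty$, subject to the requirements that (i) $\|\E_{n+1} - \E_n\|$ is at most $\min\{2^{-n-2}\varepsilon,\, g_n/10,\, 10^{-n-1}\Lambda(q_n)\}$, where $g_n$ is the minimum spectral gap of $\E_n$, and (ii) every band of $\sigma(\E_n)$ has arc length at most $\Lambda(q_n)$ for all $n \ge 1$. Condition (i) ensures norm convergence to some $\E \in \LP(r)$ with $\|\E - \E^{(0)}\| < \varepsilon$ and, via the standard band-stability argument, guarantees that each band $B$ of $\sigma(\E_n)$ remains isolated under all subsequent perturbations, giving rise to a subset $\widetilde B \subset \sigma(\E)$ contained in an $\|\E - \E_n\|$-neighborhood of $B$ with $\nu(\widetilde B) = 1/q_n$. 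The mass equality follows from the nested period structure ($q_n \mid q_m$ for $m \ge n$): each band of $\E_m$ near $B$ carries mass $1/q_m$, and there are $q_m/q_n$ of them, so the band cluster always carries total $\nu_m$-mass exactly $1/q_n$; weak-$*$ continuity of the DOS under norm convergence of the periodic approximants then passes this equality to $\nu$.

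To verify the conclusion, fix $z_0 \in \sigma(\E)$. For each $n$, upper semicontinuity of the spectrum under norm convergence produces a band $B_n$ of $\sigma(\E_n)$ within distance $\|\E - \E_n\|$ of $z_0$. I take $A_n = [z_n, z_0] \subset \partial \D$ to be the shortest closed arc containing the associated cluster $\widetilde B_n$, which by (i)-(ii) satisfies $|z_n - z_0| \le \Lambda(q_n) + 3\|\E - \E_n\| \le 2\Lambda(q_n)$. Since $\widetilde B_n \subset A_n$, one has $\nu(A_n) \ge 1/q_n$, whence
\[
\frac{\nu(A_n)}{h(|z_n - z_0|)} \;\ge\; \frac{1/q_n}{h(2\Lambda(q_n))} \;\xrightarrow[n \to \infty]{}\; +\infty
\]
by the choice of $\Lambda$. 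As $z_n \to z_0$, the limsup in \eqref{eq:olhcids:conclusion} is infinite at every $z_0 \in \sigma(\E)$.

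The main obstacle is arranging condition (ii): at every finite stage the periodic approximant must already saturate the Craig--Simon bound of Theorem~\ref{t:cmv:craigsim} up to a constant in the exponent, whereas the construction of Theorem~\ref{t:zm} only needs to open enough new gaps to drive the total spectral measure to zero in the limit. To achieve uniformly exponentially thin bands, I would exploit the DOS--Schur formula of Theorem~\ref{t:Schur}: the spectral bands of a period-$q$ pCMV matrix correspond precisely to the subset of $\partial \D$ on which the Floquet discriminant of the Szeg\H{o} transfer cocycle lies in a fixed bounded window, so iterating period-multiplying perturbations that push this discriminant far off the window on all but an exponentially small subset of $\partial\D$ will shrink every band simultaneously. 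Arranging this compatibly with the gap-stability in (i) across infinitely many inductive stages is where the real technical work lies; once it is in place, the verification in the preceding paragraph is essentially mechanical.
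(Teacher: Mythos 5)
Your architecture is the same as the paper's: build periodic approximants $\E_n$ of period $q_n$ with exponentially thin spectra via Lemma~\ref{l:lp:smallspec}, keep the perturbations summable and small relative to the previous scale so that each band's DOS weight $1/q_n$ (Proposition~\ref{p:nu=1/q}) survives to the limit, and then pit $\nu \ge 1/q_n$ on a band cluster against the exponentially small diameter of that cluster. However, there is a concrete gap exactly where you flag ``the real technical work'': your condition (ii) demands every band of $\sigma(\E_n)$ have length at most $\Lambda(q_n) = e^{-q_n}$, i.e.\ rate constant $1$ in the exponent. Lemma~\ref{l:lp:smallspec} does not give this. It gives $\Leb(\sigma(\E')) \le e^{-c\,nq}$ with $c = c(\E,\delta)$ roughly $\eta/(2\ell)$, where $\eta$ is a minimal Lyapunov exponent over a covering family and $\ell$ the size of that family; this constant can be arbitrarily small and degenerates along the induction as the operators and tolerances change. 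Your proposed repair via Theorem~\ref{t:Schur} (``push the discriminant off the window'') is not an argument and is not needed. With bands of size $e^{-c_n q_n}$ and $c_n$ possibly tending to $0$, the hypothesis \eqref{e:fasterthanlogdecay} alone does not give $q_n\, h(e^{-c_n q_n}) \to 0$; writing $g(\delta) = h(\delta)\log(\delta^{-1})$, one has $q_n h(e^{-c_n q_n}) = g(e^{-c_n q_n})/c_n$, so one must additionally choose $q_n$ large enough (depending on $c_n$ and $n$) that $g(e^{-c_n q_n}) \le c_n/(2n)$. This is precisely the extra clause in the paper's induction, and it is what converts the hypothesis on $h$ into divergence of the ratio; without it your final display does not follow.

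A second, smaller slip: in the verification you take $A_n = [z_n, z_0]$ to be ``the shortest closed arc containing the cluster $\widetilde B_n$'' and claim $\nu(A_n) \ge 1/q_n$, but an arc with one endpoint pinned at $z_0$ need not contain the whole cluster, since $z_0$ may sit in its interior. One must pick the endpoint $z_n'$ on whichever side of $z_0$ carries at least half the mass, yielding $\nu([z_0,z_n']) \ge 1/(2q_n)$; the lost factor of $2$ is harmless. With these two corrections your argument closes and coincides with the paper's proof.
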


Since it is critical to our construction, let us briefly comment on why we need the additional flexibility supplied by pCMV matrices for some parts of the paper. Broadly speaking, one does this so that one may ``move'' the spectrum a small amount by a uniformly small change in operator data. Concretely, if we want to rotate the spectrum of a CMV matrix $\E$ by a phase $\lambda \in \partial \D$, then one clearly has
\[
\sigma(\lambda\E)
=
\lambda\sigma(\E).
\]
However, this is problematic for ordinary CMV matrices, as $\lambda\E$ is not a genuine CMV matrix unless $\lambda = 1$. One can resolve this by conjugating $\lambda \E$ by a suitable diagonal unitary $\Lambda$ as above to obtain an honest CMV matrix $\widetilde \E = \lambda \Lambda^* \E \Lambda$ with $\sigma(\widetilde\E) = \lambda \sigma(\E)$. However, while $\|\lambda\E - \E\| = |\lambda - 1|$, one completely loses control over $\| \E - \widetilde \E\|$. Thus, our desire to incorporate phases is motivated precisely by our need to force small perturbations at the level of spectral data correspond to uniformly small perturbations in the coefficient data.
\bigskip

The structure of the paper is as follows. In Section~\ref{sec:genstuff}, we collect some general facts regarding the spectral analysis of periodic pCMV matrices. In Section~\ref{sec:dos}, we establish a useful connection between the DOS measure and a type of Weyl--Titchmarsh $m$-function of a periodic CMV matrix. This connection is then used in Section~\ref{sec:expspec} to construct periodic CMV matrices with exponentially thin spectra. We prove the Craig--Simon result (Theorem~\ref{t:cmv:craigsim}) in Section~\ref{sec:craigsim} and prove that it is optimal (Theorem~\ref{t:olhcids}) in Section~\ref{sec:olhc}. We prove Theorems~\ref{t:zm} and \ref{t:zhdim} in Section~\ref{sec:cantspec}. We discuss some applications to spectral theory for OPUC in Section~\ref{sec:OPUC}. In addition, we include a proof of a discrete-time RAGE theorem in Appendix~\ref{sec:RAGE} for the convenience of the reader.

\section{Periodic CMV Matrices with Phases} \label{sec:genstuff}

In the present section, we will briefly describe some aspects regarding the spectral analysis of pCMV matrices.

\subsection{A Gesztesy--Zinchenko Formalism}

First, we will briefly outline how to work out a transfer-matrix formalism for CMV matrices with phase factors as in \eqref{eq:pcmv:def}. The overall approach follows the constructions of and is very similar to \cite{GZ}.

Suppose $\E u = z u$ and let $v = \mathcal M u$. Then, one has $\mathcal L v = \E u = zu$, so we have
\begin{equation} \label{eq:gzeigs}
\Theta(\alpha_{2n},\lambda_{2n})
\begin{bmatrix}
v_{2n}\\v_{2n+1}
\end{bmatrix}
=z
\begin{bmatrix}
u_{2n}\\u_{2n+1}
\end{bmatrix},
\quad
\Theta(\alpha_{2n-1},\lambda_{2n-1})
\begin{bmatrix}
u_{2n-1}\\u_{2n}
\end{bmatrix}
=
\begin{bmatrix}
v_{2n-1}\\v_{2n}
\end{bmatrix}
\end{equation}
for all $n \in \Z$. Rearranging \eqref{eq:gzeigs}, one obtains
\begin{align}
v_{2n-1}
& =
\lambda_{2n-1}(\overline{\alpha_{2n-1}}u_{2n-1}+\rho_{2n-1}u_{2n}),\label{v_2n-1}\\
v_{2n}
& =
\lambda_{2n-1}(\rho_{2n-1}u_{2n-1}-\alpha_{2n-1}u_{2n}).\label{v_2n}
\end{align}
Multiplying \eqref{v_2n} by $\rho_{2n-1}$, we get
\[
\rho_{2n-1}v_{2n}
=
\lambda_{2n-1}(u_{2n-1}-\alpha_{2n-1}(\overline{\alpha_{2n-1}}u_{2n-1} + \rho_{2n-1}u_{2n})),
\]
which, by \eqref{v_2n-1} is equivalent to
\begin{equation}\label{rhov_2n}
\rho_{2n-1}v_{2n}
=
\lambda_{2n-1}u_{2n-1}-\alpha_{2n-1}v_{2n-1}.
\end{equation}
Together, \eqref{v_2n-1} and \eqref{rhov_2n} give us
\begin{equation} \label{eq:gzPrec}
\begin{bmatrix}
u_{2n}\\
v_{2n}
\end{bmatrix}
=
P(\alpha_{2n-1},\lambda_{2n-1};z)
\begin{bmatrix}
u_{2n-1}\\
v_{2n-1}
\end{bmatrix},
\end{equation}
with 
\begin{equation}\label{eq:bmatrix}
P(\alpha,\lambda;z)=\frac{1}{\rho}
\begin{bmatrix}
-\overline{\alpha}& \lambda^{-1}\\
\lambda& -\alpha
\end{bmatrix}.
\end{equation}
Similarly, we have
\begin{equation} \label{eq:gzQrec}
\begin{bmatrix}
u_{2n+1}\\
v_{2n+1}
\end{bmatrix}
=
Q(\alpha_{2n},\lambda_{2n};z)
\begin{bmatrix}
u_{2n}\\
v_{2n}
\end{bmatrix},
\end{equation}
with 
\begin{equation}\label{eq:Qmatrix}
Q(\alpha,\lambda;z)
=
\frac{1}{\rho}
\begin{bmatrix}
-\alpha& \lambda z^{-1}\\
\lambda^{-1}  z& -\overline{\alpha}
\end{bmatrix}.
\end{equation}

It is helpful to identify these transfer matrices with suitable conformal transformations of the unit disk, $\D$. In particular, for every choice of $\alpha, \beta \in \D$ and $\lambda,\mu ,z \in \partial \D$, one has
\[
P(\alpha,\lambda;z)Q(\beta,\mu;z) \in \SU(1,1)
\eqdef
\set{M \in \U(1,1) : \det M = 1},
\]
where
\[
 \U(1,1)
\eqdef
\set{M \in \C^{2\times 2} : M^* \begin{bmatrix}
1 & 0 \\ 0 & -1
\end{bmatrix} M = \begin{bmatrix}
1 & 0 \\ 0 & -1
\end{bmatrix}}.
\]
This is helpful, as the group $\SU(1,1)$ is quite well-understood. An inspired reference is \cite[Section~10.4]{S2}. For example, from \cite[(10.4.16)]{S2} we know that a matrix is in $\SU(1,1)$ if and only if it can be written in the form
\[
A(p,q)
\eqdef
\begin{bmatrix}
p &q\\
\overline q & \overline p
\end{bmatrix}
\]
with $p,q\in\C$ and $\vert p\vert^2-\vert q\vert^2=1$. Additionally, each $A \in \U(1,1)$ acts on $\D$ by M\"obius transformations, namely:
\begin{equation} \label{eq:mobiusdef}
\begin{bmatrix}
a & b \\ c & d
\end{bmatrix}
\cdot z
=
\frac{az + b}{cz+d}.
\end{equation}

Now, define $Y(n;z) = P(\alpha_n,\lambda_n;z)$ whenever $n$ is odd and $Y(n;z) = Q(\alpha_n,\lambda_n;z)$ whenever $n$ is even, and put
\[
Z(n,m;z)
\eqdef
\begin{cases}
Y(n-1;z) \cdots Y(m;z) & n > m \\
\Id & n=m \\
Z(m,n;z)^{-1} & n < m.
\end{cases}
\]
In view of \eqref{eq:gzPrec} and \eqref{eq:gzQrec}, this definition yields
\[
\begin{bmatrix}
u_n \\ v_n
\end{bmatrix}
=
Z(n,m;z)
\begin{bmatrix}
u_m \\ v_m
\end{bmatrix}
\text{ for every }
n,m \in \Z.
\]

\subsection{Spectrum, Lyapunov Exponent, and DOS of Periodic pCMV Matrices}

In the present subsection, we will briefly describe a few aspects of the spectral analysis of periodic pCMV matrices. Although the propositions that we quote are generally only proved for ordinary CMV matrices in the literature, the arguments carry over \textit{mutatis mutandis} for pCMV matrices, so we do not supply detailed proofs.

\bigskip

Suppose $\E$ is $q$-periodic and $q$ is even (we do not require a minimal period, so this is no loss).  The associated \emph{monodromy matrix} is
\[
\Phi(z)
\eqdef
Z(q,0;z)
\]
and the \emph{discriminant} is defined by
\[
\Delta(z)
\eqdef
\tr \,\Phi(z).
\] 
It is straightforward to identify the spectrum of $\E$ in terms of the Lyapunov behavior of $\Phi$.

\begin{prop} \label{p:floq}
If $\E$ is $q$-periodic, the limit
\[
L(z)
=
L(z,\E)
\eqdef
\lim_{n \to \infty} \frac{1}{n}\log \|Z(n,0;z)\|
\]
exists for all $z \in \C \setminus\set{0}$, and it satisfies
\begin{equation} \label{eq:LE:specrad}
L(z)
=
\frac{1}{q} \log \mathrm{spr} \, \Phi(z),
\end{equation}
where $\mathrm{spr} \, A$ denotes the spectral radius of $A$. Moreover, the spectrum of $\E$ is given by
\begin{equation} \label{eq:Espec:LEandDiscriminant}
\sigma(\E)
=
\set{z : L(z) = 0}
=
\set{z : \Delta(z) \in [-2,2]}.
\end{equation}
\end{prop}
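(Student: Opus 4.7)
My plan is in three stages: first, use periodicity to reduce the Lyapunov limit to a Gelfand spectral-radius computation; second, leverage the $\SU(1,1)$ structure of $\Phi(z)$ to equate $L(z) = 0$ with the trace condition $\Delta(z) \in [-2,2]$; and third, identify the spectrum via the standard Floquet dichotomy adapted to the Gesztesy--Zinchenko recursion.

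For the first stage, periodicity of $(\alpha, \lambda)$ yields $Y(n+q;z) = Y(n;z)$ for all $n$, so $Z(kq, 0; z) = \Phi(z)^k$, and more generally $Z(kq+r, 0; z) = Z(r, 0; z)\,\Phi(z)^k$ for $0 \le r < q$. Since each $P$- and $Q$-factor is invertible (the condition $\alpha \in \D$ forces $\rho \neq 0$), the finitely many prefactors $Z(r, 0; z)$ and their inverses are bounded by a constant depending only on $z$. Sandwiching and applying Gelfand's formula $\lim_k \|\Phi(z)^k\|^{1/k} = \spr \Phi(z)$ then give
\[
L(z) = \lim_{n \to \infty} \frac{\log\|Z(n,0;z)\|}{n} = \lim_{k \to \infty} \frac{k}{n} \cdot \frac{\log\|\Phi(z)^k\|}{k} = \frac{1}{q}\log \spr \Phi(z)
\]
for every $z \in \C \setminus \{0\}$, establishing both existence of the limit and the formula \eqref{eq:LE:specrad}.

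For the second stage, restrict to $z \in \partial \D$ (which is where the spectrum lives in any case, since $\E$ is unitary). The previous subsection records $PQ \in \SU(1,1)$, so since $q$ is even, $\Phi(z)$ is a product of $q/2$ such factors and hence lies in $\SU(1,1)$. The explicit $A(\cdot,\cdot)$ form forces $\Delta(z) = 2\Re p \in \R$ (with $p$ the upper-left entry), and $\det \Phi(z) = 1$ yields characteristic polynomial $\lambda^2 - \Delta(z)\lambda + 1$ with eigenvalues $\lambda_\pm = \tfrac{1}{2}\bigl(\Delta(z) \pm \sqrt{\Delta(z)^2 - 4}\bigr)$ satisfying $\lambda_+\lambda_- = 1$. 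A short case analysis gives $\spr \Phi(z) = 1$ precisely when $\Delta(z) \in [-2, 2]$ (complex-conjugate eigenvalues on $\partial \D$) and $\spr \Phi(z) > 1$ when $|\Delta(z)| > 2$ (real reciprocal eigenvalues), which is the second equality in \eqref{eq:Espec:LEandDiscriminant}.

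Finally, the identification $\sigma(\E) = \{z : L(z) = 0\}$ is the classical Floquet characterization, which I would prove by the usual dichotomy. If $\Delta(z) \in [-2, 2]$, pick a modulus-one eigenvalue $e^{i\theta}$ of $\Phi(z)$ and use it via \eqref{eq:gzPrec}--\eqref{eq:gzQrec} to build a bounded Bloch solution $u$ of $\E u = zu$; truncating $u$ to a long interval and renormalizing produces a Weyl sequence for $\E - zI$, forcing $z \in \sigma(\E)$. If instead $|\Delta(z)| > 2$, the two eigenvalues of $\Phi(z)$ are reciprocals of moduli $\ne 1$, so there exist linearly independent solutions decaying exponentially at $+\infty$ and $-\infty$, which combine in the standard way to furnish a bounded Green's function for $\E - zI$, placing $z$ in the resolvent set. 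The main subtlety is confirming that the $(u_n, v_n)$ sequences produced by the Gesztesy--Zinchenko recursion correspond to genuine eigenfunctions of $\E$ with the desired growth behavior, but this is essentially bookkeeping given the explicit formulas, and the argument tracks the standard one for ordinary CMV matrices (compare, e.g., \cite[Chapter~11]{S2}).
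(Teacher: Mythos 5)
Your proposal is correct and follows essentially the same route as the paper's own sketch: Gelfand's formula plus periodicity for \eqref{eq:LE:specrad}, the determinant/reciprocal-eigenvalue observation for the second equality in \eqref{eq:Espec:LEandDiscriminant}, and the standard Floquet dichotomy (bounded Bloch solution yielding a Weyl sequence versus exponentially decaying solutions yielding the resolvent) for the identification of $\sigma(\E)$. The only cosmetic difference is that you restrict the trace argument to $z \in \partial \D$, whereas $\det \Phi(z) = 1$ holds for all $z \neq 0$, so the equivalence $\spr \Phi(z) = 1 \Leftrightarrow \Delta(z) \in [-2,2]$ needs no such restriction.
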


\begin{proof}
For standard CMV matrices, this is contained in \cite[Theorems~11.1.1 and 11.1.2]{S2}. We supply a brief sketch of an argument that proves the indicated statements; notice that, since we prove fewer statements than in \cite{S2}, we follow a somewhat different path. First, existence of the limit defining $L$ and the equality in \eqref{eq:LE:specrad} are immediate from periodicity and Gelfand's formula for the spectral radius. Since $|\det \Phi| = 1$, the second equality in \eqref{eq:Espec:LEandDiscriminant} follows from \eqref{eq:LE:specrad}. 

Next, if $\Delta(z) \in [-2,2]$, then $\Phi(z)$ has at least one unimodular eigenvalue, which may be used to construct a nontrivial bounded solution $\psi$ to $\E \psi = z\psi$ (in fact, \emph{all} solutions are bounded whenever $\Delta(z) \in (-2,2)$). Normalized cutoffs of $\psi$ produce a Weyl sequence for $\E$ at spectral parameter $z$, whence $z \in \sigma(\E)$; cf.\ \cite{DFLY2}. Conversely, if $\Delta(z) \notin [-2,2]$, then $\Phi(z)$ has eigenvalues $\lambda_\pm$ with $|\lambda_+| < 1 < |\lambda_-|$. One may use the associated eigenvectors to construct solutions $u_\pm$ to $\E u = zu$ that decay exponentially quickly at $\pm \infty$. One may use $u_\pm$ to construct $(\E-z \cdot \Id)^{-1}$, and thus $z \notin \sigma(\E)$.
\end{proof}

The function $L$ from Proposition~\ref{p:floq} is called the \emph{Lyapunov exponent}.
\medskip

Next, we relate the DOS of $\E$ to a ``rotation number.'' Concretely, for $z = e^{i\tau} \in \sigma(\E)$, one may choose $\theta = \theta(\tau)$ so that 
\[
\Delta(z)
=
2\cos\theta
\text{ and }
0 \le \theta \le \pi.
\]

\begin{theorem} \label{t:ids:rot}
If $\E$ is $q$-periodic, then $\nu$, the DOS of $\E$, exists and is absolutely continuous with respect to normalized Lebesgue measure on $\partial \D$. Moreover, on the interior of each band of $\sigma(\E)$, one has
\begin{equation}
\frac{\dd\nu}{\dd\tau}
=
\frac{1}{\pi q} \left| \frac{\dd\theta}{\dd\tau}\right|.
\end{equation}
\end{theorem}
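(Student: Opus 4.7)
The plan is to prove this by Floquet decomposition, exploiting that $\E$ commutes with the $q$-step shift. I will identify the DOS with a Floquet average of the fiber operators, then perform a change of variables from the quasi-momentum $\theta$ to the spectral variable $\tau$ band by band.

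First, a Floquet--Bloch transform yields a direct integral
\[
\E
\cong
\int_{[0,2\pi)}^\oplus \E(\theta)\, \frac{\dd\theta}{2\pi},
\]
where each fiber $\E(\theta)$ is a unitary $q \times q$ matrix on $\C^q$. The spectrum of $\E(\theta)$ consists of those $z = e^{i\tau}$ for which the monodromy $\Phi(z)$ has $e^{i\theta}$ as an eigenvalue; since $\det \Phi \equiv 1$, the two eigenvalues of $\Phi(z)$ are $e^{\pm i\theta}$ on $\sigma(\E)$, so this condition is equivalent to $\Delta(z) = 2\cos\theta$. For standard CMV matrices, this Floquet setup is carried out in \cite[Chapter~11]{S2}; the phases $\lambda_n$ enter $\Phi$ through \eqref{eq:bmatrix}--\eqref{eq:Qmatrix} but do not alter the algebraic structure needed for the argument.

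Next, using the direct integral, the DOS can be identified with the Floquet average: for every continuous $f : \partial\D \to \C$,
\[
\int_{\partial\D} f\, \dd\nu
=
\frac{1}{q} \cdot \frac{1}{2\pi} \int_0^{2\pi} \tr f(\E(\theta))\, \dd\theta,
\]
with the factor $1/q$ normalizing the $q$ eigenvalues per fiber. On the interior of each band $B$, the relation $\Delta(e^{i\tau}) = 2\cos\theta$ makes $\theta$ a piecewise-smooth function of $\tau$. Using the symmetry $\sigma(\E(2\pi - \theta)) = \sigma(\E(\theta))$ (with multiplicities, as $\cos(2\pi-\theta)=\cos\theta$) to fold $[0, 2\pi)$ onto $[0, \pi]$ and changing variables $\theta \to \tau$ band by band yields
\[
\int_B f\, \dd\nu
=
\frac{1}{q\pi} \int_B f(e^{i\tau}) \left| \frac{\dd\theta}{\dd\tau} \right| \, \dd\tau,
\]
establishing both absolute continuity of $\nu$ and the claimed density formula. (If $q$ is not the minimal period, each band of $\sigma(\E)$ absorbs several preimages of $\theta$, but the scaling $|\dd\theta/\dd\tau|$ compensates, leaving the formula intact.)

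The main obstacle is reconciling the Dirichlet-cutoff definition of $\nu$ with the Floquet-averaged expression above. This amounts to showing that changing boundary conditions on a length-$Nq$ restriction perturbs the eigenvalue count by only $O(1)$ per band, so that the Dirichlet and Floquet counts agree asymptotically after normalization by $N$. For standard CMV matrices this is handled in \cite[Chapter~11]{S2}, and the pCMV case requires only cosmetic modifications to track the $\lambda_n$ factors.
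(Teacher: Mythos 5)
Your proof is correct, and it reaches the formula by a route that is parallel to, but organizationally different from, the paper's. The paper works with the restrictions $\E^{\per}_{nq}$ to $\ell^2([0,nq)\cap\Z)$ with periodic boundary conditions: it shows that $w$ is an eigenvalue of $\E^{\per}_{nq}$ iff $\Delta(w) = 2\cos(2\pi j/n)$ for some integer $0\le j\le n/2$, so that on each band the normalized eigenvalue-counting measure is a Riemann sum for $\frac{1}{\pi q}\int_0^\pi g\bigl(\Delta_B^{-1}(2\cos\theta)\bigr)\,\dd\theta$, and then lets $n\to\infty$. Your Floquet--Bloch direct integral is exactly the continuum limit of that construction: the paper samples the quasimomentum $\theta$ at the $n$-th roots of unity, while you integrate over all $\theta$ at once. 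Both arguments hinge on the same two facts --- the characterization of quasi-periodic eigenvalues via $\Delta(z)=2\cos\theta$ (valid because $\det\Phi=1$), and the change of variables $\theta\mapsto\tau$ on each band --- and both must reconcile their counting with the Dirichlet-restriction definition of $\nu$. You flag this last point explicitly as the main obstacle and dispose of it by a finite-rank boundary-condition perturbation; the paper simply asserts the weak convergence $\nu^{\per}_{nq}\to\nu$ without further comment, so you are at least as careful there. What the direct-integral formulation buys is the clean identity $\int f\,\dd\nu = \frac{1}{2\pi q}\int_0^{2\pi}\tr f(\E(\theta))\,\dd\theta$, from which absolute continuity is immediate; what the paper's discretization buys is that everything stays at the level of finite matrices and elementary Riemann sums, with no need to set up the direct-integral machinery for pCMV operators.
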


\begin{proof}
This is essentially \cite[Equation~(11.1.28)]{S2}; we supply a sketch that follows the rough outline of the proof of \cite[Theorem~5.4.5]{simszego}.

Let $\E^{\per}_{nq}$ denote the restriction of $\E$ to $\ell^2([0,nq)\cap\Z)$ with periodic boundary conditions and denote the corresponding eigenvalues by $z_{n,1}^{\per} , \cdots , z_{n,nq}^{\per}$ listed according to multiplicity. Then, the probability measures
$$
\nu_{nq}^{\per}
=
\frac{1}{nq} \sum_{j=1}^{nq} \delta_{z_{n,j}^{\per}}
$$
defined by uniformly distributing point masses on the eigenvalues of $\E_{nq}^{\per}$ converge weakly to $\nu$ as $n \to \infty$. Moreover, one can easily check that $w$ is an eigenvalue of $\E_{nq}^{\per}$ if and only if 1 is an eigenvalue of $Z(nq,0;w) = \Phi(w)^n$, which holds if and only if the monodromy matrix $\Phi(w)$ has an $n$th root of unity as an eigenvalue. Thus, $w$ is an eigenvalue of $\E_{nq}^{\per}$ if and only if
$$
\Delta(w) 
= 
2 \cos \left( \frac{2 \pi j}{n} \right)
$$
for some integer $ 0 \leq j \leq n/2$. Notice that any such eigenvalue with $0 < j < n/2$ is necessarily of multiplicity two, while the eigenvalues corresponding to $j=0$ and $j=n/2$ (for even $n$) can have multiplicity one or two.

Let $B$ denote any band of $\sigma(\E)$, and $\Delta_B = \Delta|_B$ the restriction of $\Delta$ to $B$ so that $\Delta_B^{-1}$ is a continuous map $[-2,2] \to B$. Given a continuous compactly supported function $g$, the previous discussion implies
$$
\int_{B} g(z) \, \dd\nu_{nq}^{\per}(z)
 =
\frac{2}{nq} \left( \sum_{j=0}^{\lfloor n/2 \rfloor} g \left( \Delta_B^{-1} \left( 2 \cos\left( \frac{2 \pi j}{n} \right) \right) \right) + O(1) \right).
$$
The $O(1)$ term is bounded in absolute value by $2\|g\|_\infty$ and arises from the fact that the extremal terms may have multiplicity one. Sending $n \to \infty$, we obtain
$$
\int_{B} g(z) \, \dd\nu(z) 
= 
\frac{1}{\pi q} \int_0^{\pi} g \left( \Delta_B^{-1} ( 2 \cos(\theta) ) \right) \, \dd\theta.
$$
The conclusion of the theorem follows from the change of variables theorem.
\end{proof}

Theorem~\ref{t:ids:rot} has a useful consequence for the DOS of periodic pCMV operators, which we note here for future use; compare \cite[Theorem~11.1.3(3)]{S2}.

\begin{prop} \label{p:nu=1/q}
Let $B$ be any band of $\sigma(\mathcal E)$, where $\mathcal E$ is a $q$-periodic pCMV operator. Then $\nu(B)=\frac{1}{q}$.
\end{prop}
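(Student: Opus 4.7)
The plan is to obtain $\nu(B) = 1/q$ by directly integrating the density formula supplied by Theorem~\ref{t:ids:rot}. Parametrize the band as $B = \{e^{i\tau} : \tau \in I\}$ for some closed arc $I$. By Proposition~\ref{p:floq}, a band of $\sigma(\mathcal{E})$ is, by definition, a maximal connected subset of $\Delta^{-1}([-2,2])$. The standard Floquet-theoretic picture (transferred from the unphased setting of \cite[Section~11.1]{S2} to the pCMV case, which requires only notational changes) shows that the restriction $\Delta|_B$ is a homeomorphism onto $[-2,2]$: the preimage points $\Delta = \pm 2$ correspond exactly to the two endpoints of $B$, while $\Delta(e^{i\tau}) \in (-2,2)$ for $\tau$ in the interior of $I$, and $\Delta$ has no critical points there (critical points of $\Delta$ occur only where $\Phi(z)$ has a double eigenvalue, which must be $\pm 1$, i.e., at band edges).

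Consequently, defining $\theta(\tau) \in [0,\pi]$ by $\Delta(e^{i\tau}) = 2 \cos \theta(\tau)$ as in the statement preceding Theorem~\ref{t:ids:rot}, the map $\theta: I \to [0,\pi]$ is a monotone homeomorphism. Applying Theorem~\ref{t:ids:rot} and changing variables yields
\[
\nu(B) = \int_I \frac{1}{\pi q}\left|\frac{d\theta}{d\tau}\right|\,d\tau = \frac{1}{\pi q}\int_0^\pi d\theta = \frac{1}{q},
\]
which is the claimed equality.

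The only mildly delicate point is the monotonicity/homeomorphism claim for $\Delta|_B$, which I would expect to cite rather than re-derive, since it follows from exactly the same arguments as in the CMV (unphased) setting and ultimately from the $\SU(1,1)$-structure of the monodromy matrix established in Section~\ref{sec:genstuff}. An alternative, slicker route — if one prefers to avoid any band-structure analysis — is to combine the total-mass identity $\nu(\sigma(\mathcal{E})) = 1$ with the fact that $\sigma(\mathcal{E})$ has exactly $q$ bands (counted properly, allowing touching bands to be merged in the same way both for the spectral and the DOS computation); each band contributes a full sweep $\theta: 0 \mapsto \pi$, and one recovers the uniform value $1/q$ by symmetry. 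Either route is short given Theorem~\ref{t:ids:rot}.
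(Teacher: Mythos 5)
Your proof is correct and is essentially the paper's own argument: the paper likewise deduces the result from Theorem~\ref{t:ids:rot} together with the monotone sweep of $\theta$ from $0$ to $\pi$ across each band, integrating $\frac{1}{\pi q}\left|\frac{\dd\theta}{\dd\tau}\right|$ over the band. You simply spell out the change of variables and the Floquet-theoretic monotonicity in more detail than the paper does.
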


\begin{proof}
This follows from Theorem~\ref{t:ids:rot} and the fact that $\theta$ varies between $0$ and $\pi$ in a monotone fashion on each band of $\sigma(\E)$.
\end{proof}

\section{A Formula for the DOS} \label{sec:dos}

On the interior of a band of the spectrum of a periodic pCMV matrix, the monodromy matrices are elliptic, hence conjugate to rotations. The goal of the present section is to elucidate a connection between the the DOS and the $\SU(1,1)$ matrices that conjugate the monodromy matrices to rotations.

Concretely, suppose $\E$ is $q$-periodic (with $q$ even), and let us define for even $j$
\begin{equation} \label{eq:Tjdef}
T_j
=
T_j(\zeta)
\eqdef
Z(j+2,j;\zeta)
=
P(\alpha_{j+1},\lambda_{j+1};\zeta) Q(\alpha_j,\lambda_j;\zeta).
\end{equation}
For most of what follows, we will suppress the dependence on $\zeta$ to simplify the exposition. Put $A_0 = \Id$, and, for even $k \ge 2$, let
\begin{equation} \label{eq:Akdef}
A_k
=
T_{k-2} T_{k-4} \cdots T_2 T_0.
\end{equation}
Finally, put
\begin{equation} \label{eq:Phikdef}
\Phi_k
=
A_{k} A_q A_{k}^{-1}
=
T_{k-2} T_{k-4} \cdots T_2 T_0 T_{q-2} \cdots T_{k}.
\end{equation}
The diagonal subgroup of $\SU(1,1)$ will play a key role:
\[
\K
\eqdef
\set{R_\theta \eqdef \begin{bmatrix}
e^{i\theta} & 0 \\ 0 & e^{-i\theta}
\end{bmatrix} : \theta \in [0,2\pi)}.
\]
Notice that $R_\theta$ acts on the unit disk by a counterclockwise rotation of $2\theta$ radians. For this reason, we will call members of $\K$ \emph{rotation matrices}. We can easily characterize rotation matrices as the stabilizer of zero in $\SU(1,1)$, that is:
\begin{equation} \label{eq:K:equals:stab0}
\K
=
\set{A \in \SU(1,1) : A \cdot 0 = 0},
\end{equation}
where $A \cdot z$ denotes the M\"obius action of $A$ as in \eqref{eq:mobiusdef}. Our first goal is to discuss the conjugation of elliptic $\SU(1,1)$ matrices to rotations. This is well-known; we describe it both for the reader's convenience and to help elucidate the M\"obius-action perspective that will be helpful to us.

\begin{lemma}[\cite{S2}, Theorem~10.4.7(i)] \label{l:ellipticsu11}
Let $A\in\SU(1,1)$ with $\tr A \in (-2,2)$. Then there exists $M\in \SU(1,1)$ such that
\[
A
=
M R_\theta M^{-1}
\]
for some $\theta$. Any such $M$ is unique modulo right-multiplication by an element of $\K$.
\end{lemma}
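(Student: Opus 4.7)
The plan is to prove both parts of the lemma by exploiting the M\"obius action of $\SU(1,1)$ on $\D$ that has just been set up. The main input is a standard dichotomy for elliptic elements: when $\tr A \in (-2,2)$, the M\"obius action of $A$ has exactly one fixed point in $\D$ (with its other fixed point outside $\overline{\D}$). To see this, write $A = A(p,q)$; the fixed-point equation is $\bar q z^2 - 2i(\Im p) z - q = 0$, with the degenerate case $q = 0$ handled separately (then $A \in \K$ already). Its discriminant equals $4\bigl((\Re p)^2 - 1\bigr)$ after using $|p|^2 - |q|^2 = 1$, which is strictly negative precisely when $\tr A = 2\Re p \in (-2,2)$, and the product of the two roots equals $-q/\bar q$, which has modulus $1$. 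Hence the roots are distinct with reciprocal moduli, so exactly one lies in $\D$; call it $z_0$.

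For existence, I would invoke transitivity of the $\SU(1,1)$ M\"obius action on $\D$ to pick $M \in \SU(1,1)$ with $M \cdot 0 = z_0$; explicitly, one may take $M = A(p,q)$ with $p = (1-|z_0|^2)^{-1/2}$ and $q = z_0 p$. Then $M^{-1}AM$ fixes $0$ under the M\"obius action, so by the characterization \eqref{eq:K:equals:stab0} of $\K$ as the stabilizer of $0$, we conclude $M^{-1}AM \in \K$. This produces the desired $\theta$ and establishes the existence half.

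For uniqueness, suppose $M R_\theta M^{-1} = M' R_{\theta'} M'^{-1} = A$. Comparing spectra forces $\{e^{\pm i\theta}\} = \{e^{\pm i\theta'}\}$. A short direct computation rules out $R_\theta$ and $R_{-\theta}$ being conjugate in $\SU(1,1)$ whenever $\theta \not\equiv 0 \pmod{\pi}$: any conjugator would have to swap the eigenlines of $R_\theta$ and hence be antidiagonal, but an antidiagonal element $A(0,q) \in \SU(1,1)$ satisfies $-|q|^2 = \det = 1$, which is impossible. So $\theta'$ collapses to $\theta$, and then $N \eqdef M^{-1}M'$ commutes with $R_\theta$. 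Since $R_\theta$ has distinct eigenvalues (again using $\tr A \in (-2,2)$, which ensures $\theta \not\equiv 0 \pmod \pi$), its centralizer in $\GL_2(\C)$ consists of diagonal matrices, and diagonal elements of $\SU(1,1)$ constitute precisely $\K$. Hence $N \in \K$, so $M' \in M\K$, which is the desired uniqueness.

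The only genuine obstacle is verifying that $z_0$ lies in $\D$ rather than on $\partial\D$ (the parabolic case) or outside; I would handle this by the discriminant and product-of-roots calculation sketched above. With that geometric input secured, everything else is a clean bookkeeping exercise using transitivity and the stabilizer identity \eqref{eq:K:equals:stab0}.
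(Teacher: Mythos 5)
Your argument is correct and follows essentially the same route as the paper: locate the unique fixed point of the M\"obius action of $A$ in $\D$, conjugate by the element of $\SU(1,1)$ carrying $0$ to that fixed point, and invoke the identification \eqref{eq:K:equals:stab0} of $\K$ with the stabilizer of $0$. The only differences are cosmetic --- you verify the fixed-point dichotomy by the discriminant computation where the paper cites \cite[Theorem~7.4.4]{simontextbook2A}, and you run the uniqueness step algebraically through eigenspaces and centralizers where the paper argues geometrically that any conjugator must send $0$ to the same fixed point.
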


\begin{proof}
If $A \in \SU(1,1)$ has $\tr(A) \in (-2,2)$, then the M\"obius action of $A$ on $\D$ has a unique fixed point, which can be seen from straightforward algebra (see, e.g.\ \cite[Theorem~7.4.4]{simontextbook2A}). We will call this fixed point $\xi$ and  write $A \cdot \xi = \xi \in \D$.  One can easily choose an element of $\SU(1,1)$ that moves zero to $\xi$, say
\begin{equation}\label{eq:Mdefxi}
M
=
M_\xi
\eqdef
\frac{1}{\sqrt{1-|\xi|^2}} \begin{bmatrix}
1 & \xi \\
\overline{\xi} & 1
\end{bmatrix}.
\end{equation}
Thus, $M^{-1} AM$ fixes zero, so $M^{-1} AM \in \K$ as noted above in \eqref{eq:K:equals:stab0}. Naturally, such a conjugacy is unique modulo right-multiplication by an element of the stabilizer of zero, which is exactly $\K$, as noted in \eqref{eq:K:equals:stab0}.
\end{proof}

For $z$ in the interior of a band, we have that the corresponding monodromy matrices $\Phi_j$, $j = 0,2,4,\ldots,q-2$ have trace in $(-2,2)$ and are therefore elliptic. Thus the previous theorem applies, and each $\Phi_j$ can be conjugated to an element of $\K$; for each $j$, we denote by $M_j$ an element of $\SU(1,1)$ with 
\begin{equation} \label{eq:Mjdef}
M_j^{-1} \Phi_j M_j \in \K.
\end{equation}

Our main result is the following lower bound for the derivative of the DOS, inspired by the general tools used to analyze Schr\"odinger operators with absolutely continuous spectrum; cf.\ \cite{AviDam08} and \cite{DeiSim83}.

\begin{theorem} \label{t:ids:HSnorm}
With setup as above, the derivative of the DOS satisfies
\[
\frac{\dd\nu}{\dd\tau}
\ge 
\frac{1}{4\pi q} \sum_{\ell=0}^{(q-2)/2} \|M_{2\ell}\|_2^2
\]
on the interior of each band of $\sigma(\E)$.
\end{theorem}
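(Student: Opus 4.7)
My plan is to reduce the claim, via Theorem~\ref{t:ids:rot}, to the equivalent band-interior inequality
\[
\left|\frac{\dd\theta}{\dd\tau}\right|
\ge
\frac{1}{4}\sum_{\ell=0}^{(q-2)/2}\|M_{2\ell}\|_2^2,
\]
and then to derive an exact formula for $\dd\theta/\dd\tau$ by differentiating the eigenvalue relation for the monodromy and pairing with its eigenvector via the indefinite form preserved by $\U(1,1)$. Concretely, equip $\C^2$ with the sesquilinear form $\langle u,v\rangle_J \eqdef u^*\begin{bmatrix}1 & 0\\ 0 & -1\end{bmatrix}v$; for each even $k$, the M\"obius fixed point $\xi_k \eqdef M_k\cdot 0 \in \D$ of $\Phi_k$ yields the $J$-unit eigenvector
\[
\mathbf{e}_+^{(k)} \eqdef \frac{1}{\sqrt{1-|\xi_k|^2}}\begin{bmatrix}1\\ \overline{\xi_k}\end{bmatrix}
\]
with $\Phi_k\mathbf{e}_+^{(k)} = e^{i\theta}\mathbf{e}_+^{(k)}$ (recall that all $\Phi_{2\ell}$ are conjugate, hence share a $\theta$), and a short computation from \eqref{eq:Mdefxi} gives $\|M_k\|_2^2 = 2(1+|\xi_k|^2)/(1-|\xi_k|^2)$.

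Differentiating $\Phi_0\mathbf{e}_+^{(0)} = e^{i\theta}\mathbf{e}_+^{(0)}$ in $\tau$ (with $z = e^{i\tau}$) and pairing with $\mathbf{e}_+^{(0)}$, the identity $\langle u,\Phi_0 v\rangle_J = \langle\Phi_0^{-1}u,v\rangle_J$ (valid since $\Phi_0\in\U(1,1)$) makes the $\dd\mathbf{e}_+^{(0)}/\dd\tau$ contributions cancel, leaving
\[
\Big\langle\mathbf{e}_+^{(0)},\frac{\dd\Phi_0}{\dd\tau}\mathbf{e}_+^{(0)}\Big\rangle_J = ie^{i\theta}\frac{\dd\theta}{\dd\tau}.
\]
Expanding $\dd\Phi_0/\dd\tau$ by the Leibniz rule applied to $\Phi_0 = T_{q-2}\cdots T_0$ and writing $A_k\mathbf{e}_+^{(0)} = c_k\mathbf{e}_+^{(k)}$ and $T_k\mathbf{e}_+^{(k)} = d_k\mathbf{e}_+^{(k+2)}$ (with $|c_k|=|d_k|=1$ by $J$-isometry of $A_k, T_k$), the relation $A_{k+2}=T_kA_k$ forces $d_k=c_{k+2}/c_k$, so all phase factors telescope to yield
\[
i\frac{\dd\theta}{\dd\tau} = \sum_{k=0,\,\mathrm{even}}^{q-2}\Big\langle\mathbf{e}_+^{(k)},Q_k^{-1}\frac{\dd Q_k}{\dd\tau}\mathbf{e}_+^{(k)}\Big\rangle_J,
\]
where I have used $T_k^{-1}\dd T_k/\dd\tau = Q_k^{-1}\dd Q_k/\dd\tau$ because $P(\alpha_{k+1},\lambda_{k+1})$ has no $z$-dependence.

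A direct computation from \eqref{eq:Qmatrix} gives
\[
Q_k^{-1}\frac{\dd Q_k}{\dd\tau} = \frac{i}{\rho_k^2}\begin{bmatrix}1 & -\overline{\alpha_k}\lambda_k z^{-1}\\ \alpha_k\lambda_k^{-1}z & -1\end{bmatrix},
\]
and evaluating the pairing at $\mathbf{e}_+^{(k)}$ produces $iy_k$, where
\[
y_k \eqdef \frac{(1+|\xi_k|^2) - 2\Re(\xi_k\alpha_k\lambda_k^{-1}z)}{\rho_k^2(1-|\xi_k|^2)}.
\]
The elementary bound $(1+|\xi|^2) - 2|\xi||\alpha| = (|\xi|-|\alpha|)^2 + \rho^2 \ge \rho^2$ forces $y_k \ge 1/(1-|\xi_k|^2)$, and since $|\xi_k|<1$ we have $1/(1-|\xi_k|^2) \ge (1+|\xi_k|^2)/(2(1-|\xi_k|^2)) = \|M_k\|_2^2/4$. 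Hence $\dd\theta/\dd\tau = \sum_\ell y_{2\ell} \ge \tfrac{1}{4}\sum_\ell\|M_{2\ell}\|_2^2$ (positive, so the absolute value is automatic), and Theorem~\ref{t:ids:rot} finishes the job. The most delicate part is the telescoping step: one must track the antilinearity of $\langle\cdot,\cdot\rangle_J$ in its first slot against the four unitary phases ($c_k$, $\overline{c_{k+2}}$, $d_k$, $e^{i\theta}$) to confirm that they collapse cleanly, leaving an unqualified positive sum indexed by $k$.
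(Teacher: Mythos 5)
Your proof is correct and follows essentially the same route as the paper's: the same Leibniz expansion of $\partial \Phi_0$ into local terms $T_k^{-1}\partial T_k$ transported back to a common base point, the same explicit computation of $Q_k^{-1}\partial Q_k$, and the same elementary estimate $1+|\xi|^2-2|\xi|\,|\alpha| \ge \rho^2$ leading to $1/(1-|\xi_k|^2) \ge \tfrac14 \|M_k\|_2^2$. Your pairing $\langle \mathbf{e}_+^{(k)}, A\, \mathbf{e}_+^{(k)}\rangle_J$ is exactly the $(1,1)$ entry of $M_k^{-1} A M_k$, which for $A \in \su(1,1)$ equals $i\,\Itr(M_k^{-1} A M_k)$, so your Feynman--Hellmann-style derivation of $\partial\theta$ and the telescoping of the unimodular constants $c_k, d_k$ are the eigenvector-language versions of Theorem~\ref{t:dtheta:itr} and of the paper's appeal to uniqueness of $M_k$ modulo $\K$ together with cyclicity of the iTrace under rotations.
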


We will need to introduce a variant of the trace, which we will call the iTrace.
\[
\mathrm{Itr}
\begin{bmatrix}
a&b\\
c&d
\end{bmatrix}
=
\frac{a-d}{2i}.
\]
The iTrace is clearly linear, but not cyclic. However it is easy to check that we have the following weakened variant of cyclicity:
\begin{equation} \label{eq:itr:cyclic}
\mathrm{Itr}(R^{-1}AR)=\mathrm{Itr}(A)
\text{ for all } A \in \C^{2\times 2}, \; R \in \K.
\end{equation}
The motivation behind this definition is that if $\theta$ is a smooth function of $\tau \in \R$, then
\begin{equation} \label{eq:itr:motivation}
\Itr\left(R_\theta^{-1} \partial R_\theta\right)
=
\partial \theta.
\end{equation}
The iTrace arises here as a cousin of the Pfaffian, which is defined for skew-symmetric matrices by $\mathrm{Pf}(A)^2 = \det(A)$. For $2\times 2$ skew-symmetric matrices, the Pfaffian takes a particularly simple form:
\begin{equation}\label{eq:pfaffdef}
\mathrm{Pf}(A) = \frac{1}{2}( a_{21} - a_{12});
\end{equation}
cf.\ \cite{Aslaksen}. Of course, \eqref{eq:pfaffdef} makes sense for any $2\times 2$ matrix, and so may be viewed as defining a functional on, e.g.\ $\SL(2,\R)$. The iTrace is precisely the functional that arises by pushing this functional forward from $\SL(2,\R)$ to $\SU(1,1)$ via the standard unitary equivalence between the two.

Here, and in what follows, $\partial$ denotes differentiation with respect to $\tau$. Our goal is to establish an analogue of \eqref{eq:itr:motivation} for general one-parameter families of elliptic $\SU(1,1)$ matrices. That is, if $A(\tau)$ is a smooth family of elliptic $\SU(1,1)$ matrices with $M^{-1}AM = R_\theta$, we will find a simple formula for $\partial \theta$ in terms of the iTrace of the consituent matrices.
\medskip

\begin{theorem} \label{t:dtheta:itr}
Suppose $\tau \mapsto A(\tau)$ is a smooth map $\R \to \SU(1,1)$ such that $\tr(A(\tau)) \in (-2,2)$ for every $\tau$. Then, there exist smooth choices of $M = M(\tau)$ and $\theta = \theta(\tau)$ such that
\begin{equation} \label{eq:MconjAR}
M^{-1} A M
\equiv
R_\theta.
\end{equation}
Moreover,
\begin{equation} \label{eq:dthetadt:itr}
\partial\theta
=
\Itr\left( M^{-1} A^{-1} \partial A M \right).
\end{equation}
\end{theorem}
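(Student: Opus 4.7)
The plan is to first arrange smooth choices of $M$ and $\theta$, then derive the formula by differentiating the conjugation identity and exploiting the weakened cyclicity of $\Itr$ established in \eqref{eq:itr:cyclic}.

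For the smoothness stage, I would use the construction from the proof of Lemma~\ref{l:ellipticsu11}: $M$ is built from the unique fixed point $\xi\in\D$ of the M\"obius action of $A$ on $\D$ via \eqref{eq:Mdefxi}. To promote this to a smooth family, I would apply the implicit function theorem to the fixed-point equation $A\cdot z = z$, a quadratic in $z$ with $\tau$-smooth coefficients whose derivative in $z$ is nonvanishing throughout the elliptic regime $\tr A \in (-2,2)$. This gives a smooth curve $\xi(\tau)$, hence a smooth $M(\tau) = M_{\xi(\tau)}$. The conjugate $M^{-1}AM$ then traces a smooth path in the one-dimensional subgroup $\K$, and since $\theta \mapsto R_\theta$ is a covering map onto $\K$, a smooth lift $\theta(\tau)$ exists (unique up to an integer multiple of $2\pi$).

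For the formula, the key computation is to differentiate $AM = MR_\theta$ and solve for $\partial R_\theta$:
\begin{equation*}
\partial R_\theta = M^{-1}(\partial A)M + M^{-1}A(\partial M) - M^{-1}(\partial M)R_\theta.
\end{equation*}
Using the conjugacy itself in the form $M^{-1}A = R_\theta M^{-1}$ rewrites the middle term as $R_\theta M^{-1}(\partial M)$; left-multiplying by $R_\theta^{-1}$ and using $R_\theta^{-1}M^{-1} = M^{-1}A^{-1}$ then yields
\begin{equation*}
R_\theta^{-1}\partial R_\theta = M^{-1}A^{-1}(\partial A)M + M^{-1}(\partial M) - R_\theta^{-1}\bigl(M^{-1}(\partial M)\bigr)R_\theta.
\end{equation*}
Now I would apply $\Itr$ to both sides. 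The left side equals $\partial\theta$ by the diagonal calculation \eqref{eq:itr:motivation}, and on the right the last two terms cancel by the weakened cyclicity \eqref{eq:itr:cyclic} applied with $R = R_\theta \in \K$, leaving exactly $\Itr(M^{-1}A^{-1}(\partial A)M)$, as desired.

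The main conceptual obstacle is appreciating why $\Itr$ rather than the ordinary trace is the correct functional: $\tr(R_\theta^{-1}\partial R_\theta) = 0$ identically since $R_\theta^{-1}\partial R_\theta$ lies in a traceless Lie algebra, so the full trace gives no information about $\partial\theta$. The diagonal structure of $\K$ rescues matters because conjugation by a diagonal matrix preserves diagonal entries, and $\Itr$ reads off precisely the diagonal part that encodes $\theta$. This also clarifies the consistency of the formula with the ambiguity in $M$ noted in Lemma~\ref{l:ellipticsu11}: replacing $M$ by $MR_\phi$ conjugates the argument of $\Itr$ by $R_\phi$, again leaving the iTrace unchanged, so $\partial\theta$ is unambiguously defined even though $\theta$ itself is determined only up to an additive constant.
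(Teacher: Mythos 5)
Your proposal is correct and follows essentially the same route as the paper: build $M(\tau)=M_{\xi(\tau)}$ from the smoothly varying elliptic fixed point, differentiate the conjugation identity, and kill the $\partial M$ terms using the weakened cyclicity \eqref{eq:itr:cyclic} of $\Itr$ under conjugation by elements of $\K$. Your bookkeeping (differentiating $AM=MR_\theta$ rather than $M^{-1}AM=R_\theta$) and your explicit implicit-function-theorem justification of the smoothness of $\xi$ are only cosmetic refinements of the paper's argument.
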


\begin{proof}
One can readily check that $\xi = \xi(\tau)$, the fixed point of $A(\tau)$ in $\D$ varies smoothly with $\tau$. As noted above,  we may choose
\[
M(\tau)
=
\frac{1}{\sqrt{1-|\xi(\tau)|^2}}
\begin{bmatrix}
1 & \xi(\tau) \\ \overline{\xi(\tau)}& 1
\end{bmatrix},
\]
and then we have \eqref{eq:MconjAR}. Differentiating \eqref{eq:MconjAR} with respect to $\tau$, we find
\[
(\partial M^{-1}) AM + M^{-1} \partial A M + M^{-1}A\partial M
=
iR
\begin{bmatrix}
\partial\theta & 0 \\
0 & -\partial\theta
\end{bmatrix}.
\]
Now, left-multiply both sides by $R^{-1}$ and apply \eqref{eq:MconjAR} to simplify:
\begin{equation} \label{eq:conjrel:deriv}
R^{-1} (\partial M^{-1})MR + M^{-1} A^{-1} \partial A M + M^{-1} \partial M
=
i
\begin{bmatrix}
\partial\theta & 0 \\
0 & -\partial\theta
\end{bmatrix}.
\end{equation}
By linearity and cyclicity of $\Itr$ (as in \eqref{eq:itr:cyclic}), we have
\[
\Itr\left(R^{-1} (\partial M^{-1}) MR+ M^{-1} \partial M\right)
=
\Itr\left((\partial M^{-1})M+ M^{-1} \partial M \right)
=
\Itr\left(\partial (M^{-1} M) \right)
=
0.
\]
Thus, \eqref{eq:dthetadt:itr} follows from \eqref{eq:conjrel:deriv} by taking the iTrace of both sides.
\end{proof}

Before we prove the main theorem, we will record a couple of calculations. Firstly, in light of Theorem~\ref{t:dtheta:itr}, we are interested in quantities of the form $\Itr(M^{-1} A M)$, where $A = T_j^{-1} \partial T_j$. Since $T_j \in \SU(1,1)$, it follows that $A \in \su(1,1)$, the Lie algebra of $\SU(1,1)$, which we may parameterize as follows:
\begin{equation} \label{eq:su11char}
\su(1,1)
=
\set{\begin{bmatrix}
iw & z \\ \overline z & -iw 
\end{bmatrix}
:
w \in \R, \; z \in \C}.
\end{equation}

\begin{prop} \label{p:itrMinvAM:calc}
For all $a,b,c,d \in \C$ and $\xi \in \D$, one has
\begin{equation} \label{eq:p:itrMinvAM:gen}
\Itr\left(M_\xi^{-1} \begin{bmatrix}
a & b \\ c & d
\end{bmatrix}
M_\xi\right)
=
\frac{1}{2i(1-|\xi|^2)}\left((1+|\xi|^2)(a-d) + 2b \overline{\xi} - 2c\xi \right).
\end{equation}
Consequently, for
\[
A
=
\begin{bmatrix}
iw & z \\ \overline z & -iw
\end{bmatrix}
\in \su(1,1),
\]
we have
\begin{equation} \label{eq:p:itrMinvAM:su11}
\Itr\left(M_\xi^{-1} A
M_\xi\right)
=
\frac{1}{1-|\xi|^2}\left((1+|\xi|^2)w + 2 \Im(z\overline{\xi}) \right).
\end{equation}
\end{prop}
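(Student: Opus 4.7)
The proposition is purely computational: both identities follow by writing out the matrix product $M_\xi^{-1} X M_\xi$ explicitly and reading off the relevant diagonal entries. I would proceed as follows.

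First, since $M_\xi \in \SU(1,1)$ has determinant one, its inverse is
\[
M_\xi^{-1} = \frac{1}{\sqrt{1-|\xi|^2}} \begin{bmatrix} 1 & -\xi \\ -\overline{\xi} & 1 \end{bmatrix}.
\]
I would compute $\bigl[\begin{smallmatrix} a & b \\ c & d \end{smallmatrix}\bigr] M_\xi$ first (this just introduces a factor of $(1-|\xi|^2)^{-1/2}$ and replaces each row $(x,y)$ by $(x + y\overline{\xi}, x\xi + y)$), then left-multiply by $M_\xi^{-1}$, which contributes the other factor $(1-|\xi|^2)^{-1/2}$. All one actually needs are the $(1,1)$ and $(2,2)$ entries of the resulting $2\times 2$ matrix, since $\Itr$ is $\frac{1}{2i}$ times their difference.

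A short calculation gives
\[
(1,1)\text{-entry} = \frac{a + b\overline{\xi} - c\xi - d|\xi|^2}{1-|\xi|^2}, \qquad (2,2)\text{-entry} = \frac{-a|\xi|^2 - b\overline{\xi} + c\xi + d}{1-|\xi|^2}.
\]
Subtracting and dividing by $2i$ yields
\[
\Itr\bigl(M_\xi^{-1} \tbinom{a\ b}{c\ d} M_\xi\bigr) = \frac{1}{2i(1-|\xi|^2)} \Bigl[(1+|\xi|^2)(a-d) + 2b\overline{\xi} - 2c\xi \Bigr],
\]
which is \eqref{eq:p:itrMinvAM:gen}.

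For \eqref{eq:p:itrMinvAM:su11}, I would specialize $a = iw$, $d = -iw$, $b = z$, $c = \overline{z}$, using the parameterization \eqref{eq:su11char} of $\su(1,1)$. Then $a - d = 2iw$, while $2b\overline{\xi} - 2c\xi = 2(z\overline{\xi} - \overline{z\overline{\xi}}) = 4i\,\Im(z\overline{\xi})$. Plugging in and cancelling a factor of $2i$ in numerator and denominator gives the stated formula. There is no real obstacle here — the only minor care required is keeping signs straight and observing the identity $\overline{z}\xi = \overline{z\overline{\xi}}$ so that the $b,c$ contribution simplifies cleanly to an imaginary part.
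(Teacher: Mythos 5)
Your computation is correct and follows exactly the same route as the paper: multiply out $M_\xi^{-1}\left[\begin{smallmatrix} a & b \\ c & d\end{smallmatrix}\right]M_\xi$, read off the two diagonal entries (which match the paper's), and then specialize to the $\su(1,1)$ parameterization using $z\overline{\xi}-\overline{z}\xi = 2i\,\Im(z\overline{\xi})$. Nothing is missing.
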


\begin{proof}
One has
\begin{align*}
M_\xi^{-1} \begin{bmatrix}
a & b \\ c & d
\end{bmatrix}
M_\xi
& =
\frac{1}{1-|\xi|^2} \begin{bmatrix}
1 & -\xi \\
-\overline{\xi} & 1
\end{bmatrix}
\begin{bmatrix}
a & b \\ c & d
\end{bmatrix}
\begin{bmatrix}
1 & \xi \\
\overline{\xi} & 1
\end{bmatrix} \\
& =
\frac{1}{1-|\xi|^2} \begin{bmatrix}
1 & -\xi \\
-\overline{\xi} & 1
\end{bmatrix}
\begin{bmatrix}
a + b\overline \xi & a\xi + b \\ c+d\overline\xi & c\xi + d
\end{bmatrix} \\
& =
\frac{1}{1-|\xi|^2}
\begin{bmatrix}
a+b\overline\xi-c\xi-d|\xi|^2 & * \\
\ast & -a|\xi|^2-b\overline\xi + c\xi+d
\end{bmatrix}.
\end{align*}
Thus, \eqref{eq:p:itrMinvAM:gen} holds. Of course, \eqref{eq:p:itrMinvAM:su11} follows immediately from \eqref{eq:p:itrMinvAM:gen}.
\end{proof}

Now, we want to apply this to $A = T^{-1} \partial T$ with $T = PQ$ (a product of GZ matrices), so we now compute $T$ and $T^{-1}\partial T$ explicitly.

\begin{prop} \label{p:phiinvpartialphi:calc}
Let $\alpha,\beta \in \D$ and $\lambda,\mu \in \partial \D$ be given, and define
\[
T
=
T(\zeta)
=
P(\alpha,\lambda;\zeta) Q(\beta,\mu;\zeta).
\]
Let $\zeta = e^{i\tau}$ and denote by $\partial$ differentiation with respect to $\tau$ as usual. We have
\[
T
=
\frac{1}{\rho_\alpha\rho_\beta}
\begin{bmatrix}
\overline\alpha \beta+\frac{\zeta}{\mu\lambda} & -\frac{\overline\alpha \mu}{\zeta}-\frac{\overline{\beta}}{\lambda} \\
-\beta\lambda-\frac{\alpha\zeta}{\mu} & \frac{\mu\lambda}{\zeta}+\alpha\overline\beta
\end{bmatrix}
\]
and
\[
T^{-1} \partial T
=
\frac{i}{\rho_\beta^2}
\begin{bmatrix}
1 & - \overline\beta\mu/\zeta \\
\beta\zeta/\mu & -1
\end{bmatrix}.
\]
\end{prop}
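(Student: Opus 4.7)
The plan is to verify both identities by direct matrix multiplication using the explicit formulas \eqref{eq:bmatrix} for $P$ and \eqref{eq:Qmatrix} for $Q$. For the first identity, I simply multiply the two $2\times 2$ matrices entrywise, pulling out the common prefactor $1/(\rho_\alpha\rho_\beta)$; each entry is a sum of two products, and matching the given display is purely a matter of bookkeeping of the $\zeta$, $\lambda$, $\mu$, $\alpha$, $\beta$ factors.

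For the second identity, I would proceed in three steps. First, differentiate $T$ with respect to $\tau$: since $\zeta=e^{i\tau}$ we have $\partial\zeta=i\zeta$ and $\partial(1/\zeta)=-i/\zeta$, so $\partial T$ is obtained from the displayed form of $T$ by retaining only the $\zeta$-dependent terms, each multiplied by $\pm i$. Second, compute $T^{-1}$ by cofactors: because $P,Q\in\SU(1,1)$, one has $T\in\SU(1,1)$ and in particular $\det T=1$, so $T^{-1}$ is the unadorned cofactor matrix with no determinantal prefactor. Third, multiply $T^{-1}\partial T$ entry by entry and simplify.

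The only substantive step is the cancellation in step three. For the $(1,1)$-entry, four cross terms involving $\alpha\overline\beta\zeta/(\lambda\mu)$ and $\overline\alpha\lambda\mu/\zeta\cdots$ cancel in pairs, leaving $i(1-|\alpha|^2)/(\rho_\alpha^2\rho_\beta^2)=i/\rho_\beta^2$ via $1-|\alpha|^2=\rho_\alpha^2$; the off-diagonal entries reduce similarly, with the same identity producing the claimed factors $-i\overline\beta\mu/(\rho_\beta^2\zeta)$ and $i\beta\zeta/(\rho_\beta^2\mu)$. The $(2,2)$-entry can be obtained for free by observing that $\tr(T^{-1}\partial T)=\partial\log\det T=0$, so it is the negative of the $(1,1)$-entry. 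The main obstacle is purely clerical: tracking eight $\zeta,\lambda,\mu$-dependent monomials through the product and confirming that the four anticipated cancellations occur, which they must since the final expression is forced to lie in $\su(1,1)$ by the $\SU(1,1)$ structure.
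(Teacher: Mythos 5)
Your proposal is correct and follows essentially the same route as the paper: multiply out $T=PQ$ directly, differentiate using $\partial\zeta=i\zeta$, invert by cofactors using $\det T=1$, and cancel via $1-|\alpha|^2=\rho_\alpha^2$. One small slip in the justification: $P$ and $Q$ individually have determinant $-1$ and so are not themselves in $\SU(1,1)$ — it is the product $PQ$ that lies in $\SU(1,1)$, which is all you need for $\det T=1$.
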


\begin{proof}
First,
\begin{align*}
P(\alpha,\lambda;\zeta) Q(\beta,\mu;\zeta)
& =
\frac{1}{\rho_\alpha\rho_\beta}
\begin{bmatrix}
-\overline\alpha & 1/\lambda \\
\lambda & - \alpha
\end{bmatrix} 
\begin{bmatrix}
-\beta & \mu/\zeta \\
\zeta/\mu & -\overline\beta
\end{bmatrix} \\
& =
\frac{1}{\rho_\alpha\rho_\beta}
\begin{bmatrix}
\overline\alpha \beta+\frac{\zeta}{\mu\lambda} & -\frac{\overline\alpha \mu}{\zeta}-\frac{\overline{\beta}}{\lambda} \\
-\beta\lambda-\frac{\alpha\zeta}{\mu} & \frac{\mu\lambda}{\zeta}+\alpha\overline\beta
\end{bmatrix},
\end{align*}
as desired. Since $\zeta = e^{i\tau}$, we have $\partial\zeta = i \zeta$ and $\partial(1/\zeta) = -i/\zeta$. Consequently,
\[
\partial T
=
\frac{i}{\rho_\alpha\rho_\beta}
\begin{bmatrix}
\frac{\zeta}{\mu\lambda} & \frac{\overline\alpha \mu}{\zeta} \\
-\frac{\alpha\zeta}{\mu} & -\frac{\mu\lambda}{\zeta}
\end{bmatrix}.
\]
This gives
\begin{align*}
T^{-1}\partial T
& =
\frac{i}{\rho_\alpha^2\rho_\beta^2}
\begin{bmatrix}
 \frac{\mu\lambda}{\zeta}+\alpha\overline\beta
& \frac{\overline\alpha \mu}{\zeta} + \frac{\overline{\beta}}{\lambda} \\
\beta\lambda + \frac{\alpha\zeta}{\mu} 
& \overline\alpha \beta+\frac{\zeta}{\mu\lambda} 
\end{bmatrix}
\begin{bmatrix}
\frac{\zeta}{\mu\lambda} & \frac{\overline\alpha \mu}{\zeta} \\
-\frac{\alpha\zeta}{\mu} & -\frac{\mu\lambda}{\zeta}
\end{bmatrix} \\
& =
\frac{i}{\rho_\alpha^2\rho_\beta^2}
\begin{bmatrix}
1 + \frac{\alpha\overline\beta \zeta}{\mu\lambda} - |\alpha|^2 - \frac{\alpha\overline\beta \zeta}{\mu\lambda}
& \frac{\overline \alpha \mu^2\lambda}{\zeta^2} + \frac{|\alpha|^2\overline\beta \mu}{\zeta} - \frac{\overline \alpha \mu^2\lambda}{\zeta^2} - \frac{\overline\beta \mu}{\zeta}  \\
\frac{\beta\zeta}{\mu} + \frac{\alpha\zeta^2}{\mu^2\lambda} - \frac{|\alpha|^2 \beta\zeta}{\mu} - \frac{\alpha\zeta^2}{\mu^2\lambda}
& \frac{\overline\alpha \beta \mu\lambda}{\zeta} + |\alpha|^2 - \frac{\overline\alpha \beta \mu \lambda}{\zeta}-1
\end{bmatrix} \\
& =
\frac{i}{\rho_\alpha^2\rho_\beta^2}
\begin{bmatrix}
\rho_\alpha^2 & -\rho_\alpha^2 \frac{\overline \beta \mu }{\zeta} \\
\rho_\alpha^2 \frac{\beta\zeta}{\mu} & -\rho_\alpha^2
\end{bmatrix} \\
& =
\frac{i}{\rho_\beta^2}
\begin{bmatrix}
1 & - \overline\beta\mu/\zeta \\
\beta\zeta/\mu & -1
\end{bmatrix}.
\end{align*}

\end{proof}

\begin{proof}[Proof of Theorem~\ref{t:ids:HSnorm}]
Apply Proposition~\ref{p:phiinvpartialphi:calc} to $T = T_j$ to get
\[
T_j^{-1} \partial T_j
=
\frac{i}{\rho_j^2}
\begin{bmatrix}
1 & -\overline{\alpha_j} \lambda_j/\zeta \\
\alpha_j \zeta/\lambda_j & -1
\end{bmatrix}
\]
for every $j \in 2\Z$. Then, applying Proposition~\ref{p:itrMinvAM:calc} (and denoting the fixed point of $\Phi_j$ on $\D$ by $\xi_j$), we deduce that
\begin{align*}
\Itr(M_j^{-1} T_j^{-1} \partial T_j M_j)
& =
\frac{1}{\rho_j^{2}(1-|\xi_j|^2)}\left(1+|\xi_j|^2 + 2 \, \Im(-i\overline{\alpha_j} \lambda_j \zeta^{-1} \overline{\xi_j})\right) \\ 
& \ge 
\frac{1}{\rho_j^{2}(1-|\xi_j|^2)}(1+|\xi_j|^2 - 2|\alpha_j||\xi_j|).
\end{align*}
By an easy calculus exercise, we may bound the final expression from below by
\[
\frac{1}{\rho_j^{2}(1-|\xi_j|^2)}(1+|\xi_j|^2 - 2|\alpha_j||\xi_j|)
\ge
\frac{1}{\rho_j^2(1-|\xi_j|^2)}(1-|\alpha_j|^2)
= 
\frac{1}{1-|\xi_j|^2}.
\]
In light of \eqref{eq:Mdefxi}, we have
\begin{equation} \label{eq:hsnorm:fixptbd}
\frac{1}{1-|\xi_j|^2}
\ge 
\frac{1}{4} \|M_j\|_2^2.
\end{equation}
Now, one combining all of this with Theorem~\ref{t:dtheta:itr}, we get
\begin{align*}
\partial\theta
& =
\Itr(M_0^{-1} \Phi_0^{-1} \partial \Phi_0 M_0) \\
& =
\Itr\left(M_0^{-1} \left( \sum_{\ell=0}^{(q-2)/2} A_{2\ell}^{-1} T_{2\ell}^{-1} (\partial T_{2\ell}) A_{2\ell} \right) M_0\right).
\end{align*}
Since $A_{2\ell}^{-1} \Phi_{2\ell} A_{2\ell} = \Phi_0$, it follows that $A_{2\ell}M_{0}$ conjugates $\Phi_{2\ell}$ to a rotation. Then, by uniqueness (cf.\ Lemma~\ref{l:ellipticsu11}) $A_{2\ell} M_{0} =  M_{2\ell}R_{2\ell}$ for some $R_{2\ell} \in \K$. Thus, by cyclicity of the iTrace as in \eqref{eq:itr:cyclic}, we deduce
\begin{align*}
\Itr\left(M_0^{-1} \left( \sum_{\ell=0}^{(q-2)/2} A_{2\ell}^{-1} T_{2\ell}^{-1} (\partial T_{2\ell}) A_{2\ell} \right) M_0\right)
& =
\Itr\left(\sum_{\ell=0}^{(q-2)/2} M_{2\ell}^{-1} T_{2\ell}^{-1} (\partial T_{2\ell}) M_{2\ell} \right) \\
& \ge 
\frac{1}{4} \sum_{\ell=0}^{(q-2)/2} \|M_{2\ell}\|_2^2,
\end{align*}
where the inequality is from \eqref{eq:hsnorm:fixptbd}. By Theorem~\ref{t:ids:rot}, we have
\[
\partial \nu
=
\frac{1}{\pi q} \partial\theta,
\]
so the theorem is proved.
\end{proof}

\section{Periodic Operators with Exponentially Thin Spectra} \label{sec:expspec}

In the present section, we use the DOS formula from the previous section to perform our key perturbative construction. The first step is to show that we may perturb the phases to open all spectral gaps.

\begin{lemma}\label{l:opengaps}
Suppose $\alpha$ is $q$-periodic, that $q$ is even, and give $(\partial \D)^q$ the topology induced by the uniform metric
\[
\|\lambda - \lambda'\|_\infty
=
\max_j |\lambda_j - \lambda'_j|.
\]
There is a dense open set $G\subseteq (\partial \D)^q$, such that $\sigma(\E_{\alpha,\lambda})$ has $q$ connected components for every $\lambda \in G$. {\rm(}NB: We use $\lambda$ both for the element of $(\partial \D)^q$ and the $q$-periodic extension to all of $\Z${\rm)}.
\end{lemma}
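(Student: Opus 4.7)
Since the entries in \eqref{eq:pcmv:def} depend continuously on $\lambda$, the map $\lambda \mapsto \sigma(\E_{\alpha,\lambda})$ is continuous into the compact subsets of $\partial\D$ in the Hausdorff metric. By Proposition~\ref{p:floq}, a $q$-periodic pCMV matrix has at most $q$ spectral bands; having exactly $q$ means all gaps are open and of positive length, a condition stable under small Hausdorff perturbations. Hence $G$ is open.

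\textbf{Density via analyticity.} Each transfer matrix $Y(n;z,\lambda_n)$ depends polynomially on $z^{\pm 1}$ and $\lambda_n^{\pm 1}$, so the monodromy matrix $\Phi(z;\lambda)$ and its trace $\Delta(z;\lambda) = \tr\Phi(z;\lambda)$ are Laurent polynomials in $z$ whose coefficients are real-analytic in $\lambda \in (\partial\D)^q$. For $z = e^{i\tau}$, the function $\tau\mapsto\Delta(e^{i\tau};\lambda)$ is a real-valued trigonometric polynomial of degree $\leq q/2$, depending real-analytically on $\lambda$. By Proposition~\ref{p:floq}, $\sigma(\E_{\alpha,\lambda})$ has $q$ connected components iff both $+2$ and $-2$ are regular values of $\Delta(\cdot;\lambda)$; otherwise there exists $\tau_0$ with $\Delta(e^{i\tau_0};\lambda) = \pm 2$ and $\partial_\tau\Delta(e^{i\tau_0};\lambda) = 0$. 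Eliminating $\tau_0$ (for instance via the resultant of $\Delta(z;\lambda)^2 - 4$ and $\partial_z\Delta(z;\lambda)$, cleared of $z$-denominators), this degeneracy is captured by the vanishing of a single real-analytic function $R:(\partial\D)^q\to\R$. Provided $R\not\equiv 0$, its zero set is a proper real-analytic subvariety of the connected torus $(\partial\D)^q$ with empty interior, and so $G = (\partial\D)^q\setminus\{R=0\}$ is dense.

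\textbf{Main obstacle: exhibiting $\lambda^* \in G$.} The crux is verifying $R\not\equiv 0$, i.e., constructing one $\lambda^*\in(\partial\D)^q$ for which $\sigma(\E_{\alpha,\lambda^*})$ has $q$ components. The gauge identity~\eqref{eq:gauge.alpha'} shows that varying $\lambda$ with $\alpha$ fixed is equivalent to independently rotating the phases of the Verblunsky coefficients $\alpha'_n$ of a standard $q$-periodic CMV matrix while preserving $|\alpha'_n| = |\alpha_n|$. A natural candidate is a ``maximally twisted'' phase assignment such as $\alpha'_n = |\alpha_n| e^{2\pi i n/q}$, for which a direct computation of the monodromy matrix should produce a discriminant with $2q$ transverse crossings of $\pm 2$. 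Alternatively, one may apply a Sard/transversality argument to the joint map $(\tau,\lambda) \mapsto (\Delta(e^{i\tau};\lambda)^2 - 4,\,\partial_\tau\Delta(e^{i\tau};\lambda))$ from $\partial\D\times(\partial\D)^q$ to $\R^2$ and extract a good $\lambda^*$ from a generic slice; the required submersion condition reduces to showing that the $\lambda$-gradient of $\Delta(e^{i\tau_0};\cdot)$ is nonzero at a degenerate critical point, which in turn can be read off from the product structure $\Phi = Y(q-1)\cdots Y(0)$ by differentiating in a single $\lambda_j$ and using that the resulting matrix perturbation is never in the kernel of the trace against $Y(q-1)\cdots\widehat{Y(j)}\cdots Y(0)$ for generic parameters. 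This existence step is the only place where the argument goes beyond soft continuity and analyticity considerations.
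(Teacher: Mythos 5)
Your openness argument is fine and is essentially the paper's (Hausdorff continuity of the spectrum plus persistence of open gaps). The density argument, however, has a genuine gap, and you have correctly located it yourself: the whole analyticity/resultant scheme reduces to exhibiting a single $\lambda^*\in(\partial\D)^q$ for which all $q$ gaps of $\sigma(\E_{\alpha,\lambda^*})$ are open (so that $R\not\equiv 0$), and you never produce one. Neither of your two suggestions closes this. The ``maximally twisted'' candidate rests on the claim that varying $\lambda$ amounts to independently rotating the phases of the Verblunsky coefficients of a standard $q$-periodic CMV matrix; but the formulas following \eqref{eq:gauge.alpha'} show that the gauge-transformed sequence $\alpha'$ is in general \emph{not} $q$-periodic (the accumulated phase $\overline{\lambda_{2n}\lambda_{2n-1}^2\cdots\lambda_0}$ does not return to itself after one period without an extra constraint on $\prod_j\lambda_j$), and in any case the phases of $\alpha'_0,\dots,\alpha'_{q-1}$ are not free coordinates in $\lambda$; moreover no discriminant computation for the candidate is actually carried out. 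The Sard/transversality alternative likewise defers the essential point: the nonvanishing of the $\lambda$-gradient of $\Delta(e^{i\tau_0};\cdot)$ at a degenerate band edge is precisely the quantitative fact that must be proved, and asserting it ``for generic parameters'' is circular.

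The paper avoids the need for any global witness by arguing locally at an arbitrary $\mu$: at a closed gap the monodromy matrix is $\pm\Id$ by Floquet theory, and one expands $A_q(\mu)^{-1}A_q(\lambda)$ as a product of factors $\Id + A_j(\mu)^{-1}Q_j^{-1}R(\alpha_{j+1},\mu_{j+1},\lambda_{j+1})Q_jA_j(\mu)$; cyclicity of the trace then isolates an explicit leading term $2\sum_j\rho_j^{-2}|\mu_j-\lambda_j|\cos\theta_j$ whose sign can be controlled by the choice of phases, forcing $|\tr A_q(\lambda)|>2$ at the formerly closed gap for all sufficiently small nonzero perturbations. Since this works at every $\mu$ and simultaneously at all closed gaps (while open gaps persist), density follows directly. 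If you wish to retain your analyticity framework, you would still have to carry out a concrete computation of exactly this kind to certify $R\not\equiv 0$ --- and once that computation is in hand, the local perturbative argument already yields density on its own, making the resultant machinery unnecessary.
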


\begin{remark*}
This was proved for continuum Schr\"odinger operators in \cite{S76}. The following proof is based on a different version of that argument given in \cite{Remling}.
\end{remark*}

\begin{proof}
Let $\mu$ be a $q$-periodic sequence in $(\partial \D)^\Z$. For $n$ even let $T_n(\mu)$ refer to the $2$-step transfer matrix corresponding to multiplying the $P$ and $Q$ Gesztesy-Zinchenko matrices \eqref{eq:bmatrix} and \eqref{eq:Qmatrix} corresponding to $(\alpha,\mu)$, as in \eqref{eq:Tjdef}, that is,
\[
T_n(\mu)
=
T_n(\mu;z)
\eqdef
P(\alpha_{n+1},\mu_{n+1};z) Q(\alpha_n,\mu_n;z).
\]
Throughout the argument, let us suppress the $z$-dependences of the transfer matrices to make things a little easier to read. Similarly, let $T_n(\lambda)$ refer to the $2$-step transfer matrix corresponding to the perturbed sequence $(\alpha,\lambda)$, and define the $n$-step transfer matrices $A_n(\mu)$ and $A_n(\lambda)$ as in \eqref{eq:Akdef}. As before, let $A_0(\lambda)$ be the identity matrix. Then, we have 
\[
A_{n}(\lambda)
=
T_{n-2}(\lambda) A_{n-2}(\lambda)
\]
for each $n \in 2\Z_+$. 
\medskip

Now, assume that $\sigma(\E_{\alpha,\mu})$ has a closed gap at the implicit spectral parameter, $z$. Then, $A_q(\mu)$ must be $\pm \Id$, by Floquet theory (compare \cite[Theorem~11.2.3]{S2}). Let us consider
\begin{equation}
W_n(\lambda)
\eqdef
A_n(\mu)^{-1}A_n(\lambda),
\label{eq:Y.expansion}
\end{equation}
and note that $W_0(\lambda)$ is the identity matrix. Let $\theta_j$ be the argument of $\lambda_j\overline{\mu_j} - 1$, so that
\begin{equation} \label{eq:phasechoice}
\lambda_j\overline{\mu_j} - 1
= 
\vert\mu_j - \lambda_j \vert e^{i\theta_j}.
\end{equation}
Our goal is to compute an expansion of $\tr\,W_q$ in powers of $|\lambda_j - \mu_j|$, which we will accomplish by inductively peeling off factors of the form $T_n(\mu)^{-1} T_n(\lambda)$. Concretely, first notice that
\[
P(\alpha,\mu;z)^{-1} P(\alpha,\lambda;z)
=
\frac{1}{\rho_\alpha^2}
\begin{bmatrix}
\lambda\overline{\mu} - |\alpha|^2 & \alpha(\overline\lambda - \overline\mu) \\
\overline\alpha(\lambda - \mu) & \mu\overline\lambda - |\alpha|^2
\end{bmatrix}
=
\Id + \frac{1}{\rho_\alpha^2}
\begin{bmatrix}
\lambda\overline{\mu} - 1 & \alpha(\overline\lambda - \overline\mu) \\
\overline\alpha(\lambda - \mu) & \mu\overline\lambda - 1
\end{bmatrix}.
\]
In light of this, we denote by
\[
R(\alpha,\mu,\lambda)
=
\frac{1}{1-|\alpha|^2}
\begin{bmatrix}
\lambda\overline{\mu} - 1 & \alpha(\overline\lambda - \overline\mu) \\
\overline\alpha(\lambda - \mu) & \mu\overline\lambda - 1
\end{bmatrix},
\]
and we note that \eqref{eq:phasechoice} implies that
\begin{equation} \label{eq:Ramltrace}
\tr \, R(\alpha_j,\mu_j,\lambda_j) 
= 
2 \rho_j^{-2} |\mu_j - \lambda_j| \cos\theta_j
\text{ for every } j.
\end{equation}
Now, pick any $\lambda \in (\partial \D)^q$ such that $\lambda_j = \mu_j$ for every \emph{even} $1 \le j \le q$, and denote by
\[
Q_j 
= 
Q(\alpha_j,\mu_j;z) 
= 
Q(\alpha_j,\lambda_j;z)
\]
for each even $j$. We then have
\begin{align*}
 W_{q}(\lambda)
& =
A_q(\mu)^{-1}A_q(\lambda)\\
& =
A_{q-2}(\mu)^{-1}
Q_{q-2}^{-1} 
\left(
\Id + R(\alpha_{q-1},\mu_{q-1},\lambda_{q-1})
\right)
Q_{q-2}
A_{q-2}(\lambda)\\
& =
A_{q-2}(\mu)^{-1}
\left(
\Id + Q_{q-2}^{-1} 
R(\alpha_{q-1},\mu_{q-1},\lambda_{q-1})
Q_{q-2}
\right)
A_{q-2}(\mu) W_{q-2}(\lambda)
\end{align*}
We can then apply a similar calculation to $W_{q-2}$ in the last equation. Repeating the process by using \eqref{eq:Y.expansion} with $n=q-2,q-4,\ldots$ until we reach $W_0 = \Id$, we obtain 
 \begin{align*}
W_q(\lambda) 
& =
\prod_{j \text{ even, } j=q-2}^{0}  A_{j}(\mu)^{-1}
\left(
\Id + Q_{j}^{-1} 
R(\alpha_{j+1},\mu_{j+1},\lambda_{j+1})
Q_{j}
\right)
A_{j}(\mu)\\
& =
\prod_{j \text{ even, } j=q-2}^{0}  
\left(
\Id + A_{j}(\mu)^{-1} Q_{j}^{-1} 
R(\alpha_{j+1},\mu_{j+1},\lambda_{j+1})
Q_{j}A_{j}(\mu)
\right)
 \end{align*}
By the cyclicity of the trace and \eqref{eq:Ramltrace}, this implies that
\[
\tr(W_q(\lambda))
=
2 + 2\sum_{j \text{ odd, } j=1}^{q-1} \rho_j^{-2}\vert\mu_j - \lambda_j\vert \cos(\theta_j) + O\left( \|\lambda - \mu\|_\infty^2 \right).
\]
Note that for $\lambda_j\overline{\mu_j}$ on the unit circle and close to $1$, $\cos(\theta_j)>0$. Thus for $\Vert\lambda-\mu\Vert_\infty$ sufficiently small (and nonzero), it must be true that  $ \mathrm{Tr}(W_q(\lambda)) >2$. Since $A_q(\mu) = \pm \Id$, it follows that
\[
\tr \, A_q(\lambda)
=
\tr( A_q(\mu) W_q(\lambda) )
=
\pm \tr\,W_q(\lambda)
\in \R \setminus[-2,2],
\]
and thus our implicit spectral parameter $z$ is in an open gap of $\sigma(\E_{\alpha,\lambda})$. Thus, $G$ is dense, as claimed. To see that $G$ is open in $(\partial \D)^q$, simply note that the edges of the bands of the spectrum may move by no more than $\varepsilon$ in the usual metric on $\C$ if $\mu$ is perturbed by an amount $\varepsilon$ in $\|\cdot\|_\infty$; this follows from standard eigenvalue perturbation theory.
\end{proof}

\begin{remark}
Notice that the argument still works if we only perturb some of the phases (in other words, there is a subset $S$ of $\{0,\ldots,q-1\}$ such that $\lambda_j=\mu_j$ for all $j\notin S$, and we use the topology generated by the norm $\Vert \lambda\Vert=\max_{j\in S} \vert \lambda_j\vert$). In particular, Lemma~\ref{l:opengaps} holds if we only perturb one phase \`a la \cite[Claim~3.4]{Avila}. 
\end{remark}

Using our work thus far, we may now perform our key construction: a dense set of periodic CMV operators with exponentially thin spectra.

\begin{lemma} \label{l:lp:smallspec}
Given $q \in 2\Z_+$, $r \in (0,1)$, $\E \in \P(q,r)$, and $\delta > 0$, there exist constants $c = c(\E,\delta)$, $N_0 = N_0(\E,\delta) > 0$ such that, for any $n \geq N_0$, there exists $\E' \in \P(nq,r)$ such that $\|\E - \E'\| < \delta$, and
\[
\Leb(\sigma(\E'))
\le 
e^{-cnq}.
\]
\end{lemma}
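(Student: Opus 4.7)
\emph{Plan.} The strategy parallels the approach pioneered in \cite{Avila} for discrete Schr\"odinger operators: view $\E \in \P(q,r)$ as an element $\tilde \E \in \P(nq,r)$ by trivial $n$-fold repetition of its Verblunsky and phase data (so $\sigma(\tilde\E) = \sigma(\E)$ and $\|\tilde\E - \E\| = 0$), then apply Lemma~\ref{l:opengaps} to $\tilde\E$ to obtain a phase perturbation $\E' \in \P(nq,r)$ satisfying $\|\E' - \E\| < \delta$ and such that $\sigma(\E')$ has all $nq$ gaps open, yielding exactly $nq$ disjoint bands $B_1,\ldots,B_{nq}$. (Here I use that a perturbation of size $\eta$ of the phases in the uniform metric produces an operator-norm perturbation of comparable size, by the banded structure of $\E$ and the explicit form of $\Theta(\alpha,\lambda)$; the density part of Lemma~\ref{l:opengaps} guarantees this can be done arbitrarily small.)

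Next I would use the uniform mass property $\nu(B_j) = 1/(nq)$ from Proposition~\ref{p:nu=1/q} together with the DOS lower bound of Theorem~\ref{t:ids:HSnorm}. Since $d\nu/d\tau \ge \tfrac{1}{4\pi nq}\sum_\ell \|M_{2\ell}(z)\|_2^2$ on each band, we get
\[
\Leb(B_j)
\;\le\; \max_{z\in B_j}\frac{d\tau}{d\nu}\cdot \nu(B_j)
\;\le\; \frac{4\pi}{\min_{z\in B_j}\sum_\ell \|M_{2\ell}(z)\|_2^2}.
\]
Summing over the $nq$ bands yields
\[
\Leb(\sigma(\E'))
\;\le\; \frac{4\pi \, nq}{\min_{z\in\sigma(\E')}\sum_\ell \|M_{2\ell}(z)\|_2^2}.
\]
Thus to conclude $\Leb(\sigma(\E')) \le e^{-cnq}$ for sufficiently large $n$, it suffices to choose the perturbation $\E'$ so that $\sum_\ell \|M_{2\ell}(z)\|_2^2 \ge e^{c'nq}$ uniformly on $\sigma(\E')$, for some $c' > c > 0$; the polynomial factor $4\pi nq$ can then be absorbed into the exponential.

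The hard part is precisely this uniform exponential lower bound on $\sum_\ell \|M_{2\ell}(z)\|_2^2$. Since the Lyapunov exponent vanishes on $\sigma(\E')$ by Proposition~\ref{p:floq}, the cocycle products $A_{2\ell}(z)$ themselves cannot grow exponentially in $\ell$, so the required blow-up of $\|M_{2\ell}\|_2^2 = 2(1+|\xi_{2\ell}|^2)/(1-|\xi_{2\ell}|^2)$ must be driven by the fixed points $\xi_{2\ell} = A_{2\ell}(z)\cdot\xi_0$ of the conjugated monodromies $\Phi_{2\ell}$ being pushed exponentially close to $\partial\D$. The expected mechanism is to use the $n$-fold repetition structure in conjunction with Lemma~\ref{l:opengaps}: one arranges the phase perturbations block-by-block so that the M\"obius action accumulating over each of the $n$ repeated blocks of length $q$ contracts $\D$ toward a small neighborhood of $\partial\D$, forcing $1-|\xi_{2\ell}|^2 \le e^{-c''n}$ at a positive fraction of the indices $\ell \in \{0,1,\ldots,nq/2-1\}$. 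The delicate point is controlling this clustering uniformly in the spectral parameter $z \in \sigma(\E')$, which I expect to require a careful combinatorial argument that tracks how small perturbations of the phases move the invariant fixed-point sections of the transfer-matrix cocycle.
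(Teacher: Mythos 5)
Your reduction is correct as far as it goes: opening all gaps via Lemma~\ref{l:opengaps} and combining Proposition~\ref{p:nu=1/q} with Theorem~\ref{t:ids:HSnorm} does reduce the lemma to a uniform exponential lower bound on $\sum_\ell \|M_{2\ell}(z)\|_2^2$ over $z \in \sigma(\E')$, and this is exactly the reduction used in the paper. But your proposal stops precisely at the step that carries all the content, and the mechanism you sketch for that step is not the one that works. On the spectrum the monodromy matrices are elliptic, so their M\"obius action does not ``contract $\D$ toward a neighborhood of $\partial\D$''; the fixed points $\xi_{2\ell}$ are pushed toward $\partial\D$ not by any contraction of the full-period cocycle (whose Lyapunov exponent vanishes on the spectrum, as you note), but by exponential growth of \emph{partial} products $A_{2\ell}$ within a single period. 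Indeed, by the uniqueness statement of Lemma~\ref{l:ellipticsu11} one has $A_{2\ell}M_0 = M_{2\ell}R_{2\ell}$ with $R_{2\ell}\in\K$, so $\|A_{2\ell}\|\ge e^{cn}$ forces $\max(\|M_0\|,\|M_{2\ell}\|)\ge e^{cn/2}$. The problem is therefore to arrange that for \emph{every} $z$ in the spectrum of the new operator some partial product over a sub-block of the period is exponentially large, and your proposal offers no construction achieving this.

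The missing idea is the covering-and-concatenation device. First pass to period $n'q$ with all gaps open so that each band is shorter than $\delta/6$; then, using the phase flexibility of pCMV matrices, rotate the operator by finitely many small unimodular factors $e^{ij\gamma}$ to obtain $\E_1',\dots,\E_\ell'\in B_\delta(\E)\cap\P(n'q,r)$ with $\bigcap_j\sigma(\E_j')=\emptyset$ (this is Claim~\ref{claim:cover}, and it is exactly why the phases were introduced). By Proposition~\ref{p:floq} this yields $\eta=\min_z\max_j L(z,\E_j')>0$: every $z\in\partial\D$ lies in the resolvent set of some $\E_j'$, which therefore has positive Lyapunov exponent there. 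The $nq$-periodic $\E'$ is then built by concatenating the coefficient blocks of $\E_1',\dots,\E_\ell'$, each repeated roughly $n/(\ell n')$ times; for any $z$ in a band of the new operator, the repeated block belonging to the $\E_j'$ with $L(z,\E_j')\ge\eta$ produces a partial transfer matrix of norm at least $e^{nq\eta/2\ell}$, which by the mechanism above forces one $\|M\|_2^2\ge e^{nq\eta/2\ell}$ and hence $\Leb(J)\le 4\pi e^{-nq\eta/2\ell}$ for each band $J$. Note also that a single exponentially large term in the sum suffices; you do not need blow-up at ``a positive fraction of the indices.'' Without some such device supplying, uniformly over the spectrum, a sub-block with positive Lyapunov exponent, your argument only yields the trivial bound $\Leb(\sigma(\E'))\le 4\pi$, since $\sum_\ell\|M_{2\ell}\|_2^2\ge nq$ is all that holds in general.
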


\begin{proof}
The broad strokes of the proof follow \cite{DFL2015}, which refined the techniques of \cite{Avila}. Let $\E \in \P(q,r)$ and $\delta > 0$ be given. Fix $n'$ large enough that
\begin{equation} \label{eq:nprimelarge}
\frac{4\pi}{n'q}
<
\frac{\delta}{6}
\end{equation}
By Lemma~\ref{l:opengaps}, there exists $\E' \in \P(n'q,r) \cap B_{\delta/2}(\E)$ such that $\sigma(\E')$ has exactly $n'q$ connected components. Furthermore, the Lebesgue measure of each band of $\sigma(\E')$ is at most $\frac{4\pi}{n'q}$ by Proposition \ref{p:nu=1/q} and Theorem~\ref{t:ids:HSnorm}. This implies that the Lebesgue measure of each band of $\sigma(\E')$ is less than $\delta/6$. 

\begin{claim}
There is a finite set $\F = \{\E_1',\ldots \E_\ell'\} \subseteq B_\delta(\mathcal E) \cap \P(n'q,r)$ such that
\[
\bigcap_{j=1}^\ell \sigma(\E_j')
=
\emptyset.
\]\label{claim:cover}
\end{claim}

\begin{proof}[Proof of claim]
Let $\gamma_0$ denote the minimum arc length of a gap of $\sigma(\E')$, and define
\[
\gamma
=
\min\left(\frac{\delta}{6},\frac{\gamma_0}{2}\right),
\quad
k
=
\left\lceil\frac{\delta}{6\gamma}\right\rceil.
\]
Then, we construct a family of pCMV operators $U_{-k}, \ldots, U_k$ by 
$U_j = e^{ij\gamma} \E'$. Since each band of $\sigma(\E')$ has arc length less than $\delta/6$, it is clear that the resolvent sets of these operators cover the unit circle; moreover, using the inequality
\[
|e^{i\theta} -1|
\le |\theta|
\]
for $\theta \in \R$, we see that $U_j \in B_{\delta/2}(\E') \cap \P(n'q,r)$ for each $j$. Thus, the collection $\mathcal F = \{U_j: |j| \le k\}$ fulfills the claim.
\end{proof}

By Proposition~\ref{p:floq}, we have
\begin{equation}\label{equation:positive.Lyapunov}
\eta
\eqdef
\min_z\max_{1\leq j\leq \ell}
 L(z,\E_j)
>
0.
 \end{equation}
Now, let $n \in \Z_+$ with $n > 4\ell n'$ be given, and choose $\widetilde n \in \Z_+$ maximally subject to the constraint
\begin{equation} \label{eq:expspec:tilden:cond}
\ell n'(\widetilde n + 1)
\leq 
n.
\end{equation}
We generate an $nq$-periodic CMV operator $\widetilde\E$ by concatenating the Verblunsky coefficients $\alpha'^{(j)}$ and phases $\lambda'^{(j)}$ of the $\E'_j$ operators in Claim~\ref{claim:cover}. We concatenate each pair $(\alpha'^{(j)},\lambda'^{(j)})$ at least $\widetilde n+1$ times.
 
More precisely, let us denote $s_\ell = nq$, $s_j:= j(\widetilde n+1)n'q$ for each integer $0\leq j < \ell$, and define $\widetilde \E$ to be the $nq$-periodic CMV operator with coefficients $(\widetilde \alpha, \widetilde \lambda)$ defined by
\begin{equation}\label{equation:tilde.alpha.defn}
\widetilde\alpha_m
=
\alpha^{(j)}_m ,\quad
\widetilde \lambda_m
=
\lambda^{(j)}_m,
\quad m\in[s_{j-1},s_j-1],\quad
1 \le j \le \ell.
\end{equation}
Now suppose $z$ is chosen so that at $\widetilde\Delta(z) \in (-2,2)$, where $\widetilde \Delta$ denotes the discriminant of $\widetilde\E$. By \eqref{equation:positive.Lyapunov} there is a $j\in\{1,\ldots,\ell\}$ such that $L(z, \E'_j)\geq\eta$. But the associated transfer matrices over subintervals of  $[s_{j-1},s_j-1]$ of length $\widetilde nn'q$ are exponentially large. More explicitly, denote the GZ transfer matrices associated to $\widetilde\E$ by $\widetilde Z(n,m;z)$, and those associated to $\E_j'$ by $Z_j(n,m;z)$. Then, for $k\in [0, n'q-1]$ we have 
\begin{align}
\nonumber\Vert \widetilde Z(k + s_j-n'q,k + s_{j-1};z)\Vert
& \geq
\spr\left(Z_j(k + n'q,k;z)^{\widetilde n} \right) \\
& =
\spr\left(Z_j(k + n'q,k;z)\right)^{\widetilde n} \nonumber\\
& =
\exp(\widetilde n n'q L(z, \E_j')) \nonumber\\
& \geq 
\exp(nq\eta/2\ell)\label{equation:Abound}
\end{align}
For the last step, we used that $\widetilde n$ is chosen maximal so that \eqref{eq:expspec:tilden:cond} holds, and hence
\begin{equation} \label{eq:expspec:nlargecond}
\ell n'(\widetilde n + 2) > n.
\end{equation}
Since we chose $n > 4\ell n'$, \eqref{eq:expspec:nlargecond} yields
\[
n
<
2\ell n'(\widetilde n +2) - n
<
2\ell n'(\widetilde n + 2) - 4\ell n'
=
2\ell n' \widetilde n,
\]
which gives us \eqref{equation:Abound}.

The estimate implies lower bounds on the norms of the matrices that conjugate the monodromy matrices into rotations. Specifically, let $\widetilde T$, $\widetilde \Phi$, and $\widetilde M$ be the matrices defined in \eqref{eq:Tjdef}, \eqref{eq:Phikdef}, and \eqref{eq:Mjdef} and associated with $\widetilde\E$. Then, for each even $k$, we have $X^{-1} \widetilde \Phi_{k+s_{j-1}} X \in \K$, where
\[
X
=
\widetilde M_{k+s_{j-1}}
\text{ and }
X
=
\widetilde Z(k + s_j-n'q,k + s_{j-1})^{-1} \widetilde M_{k + s_j - n'q} ,
\]
by $nq$-periodicity of $\widetilde \E$ and the definition of $\widetilde M$. By Lemma~\ref{l:ellipticsu11}, the $\widetilde M$'s are unique modulo right-multiplication by diagonal unitary elements. In other words,
\[
\widetilde Z(k + s_j-n'q,k + s_{j-1})^{-1} \widetilde M_{k + s_j - n'q}
=
\widetilde M_{k+s_{j-1}} U
\]
for some $U \in \K$. Using \eqref{equation:Abound}, we have that
\[
\max(\Vert  M_{k + s_j - n'q} \Vert, \Vert \widetilde M_{k+s_{j-1}}\Vert)
\geq
\exp(nq\eta/4\ell).
\]
Notice that this bound is uniform over $z \in \partial \D$ with $\widetilde\Delta(z) \in (-2,2)$. Thus for any band $J$ of $z\in\sigma(\widetilde\E)$, we have
\[
\Leb(J)\leq 4\pi e^{-nq\eta/2\ell}
\] 
by Proposition~\ref{p:nu=1/q} and Theorem~\ref{t:ids:HSnorm}. There are $nq$ bands in $\widetilde\E$, which then implies
\[
 \Leb(\sigma(\widetilde\E))
\leq 
4\pi nq e^{-nq\eta/2\ell}.
\] 
The desired estimate then follows for $c < \frac{\eta}{2\ell}$ and sufficiently large $n$.
\end{proof}

Using the previous theorem, we get generic zero-measure Cantor spectrum, almost for free.

\begin{theorem} \label{t:Udelt:denseopen}
Given $\delta>0$, let $M_\delta$ denote the set of limit-periodic pCMV operators whose spectra have Lebesgue measure less than $\delta$. For all $\delta>0$, $M_\delta$ is a dense, open subset of $\LP$.
\end{theorem}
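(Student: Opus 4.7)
My plan is to split the statement into an openness claim and a density claim, which rest on different inputs.

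For openness, I would lean on upper semicontinuity of the spectrum in operator norm. Given $\E \in M_\delta$ with $\Leb(\sigma(\E)) < \delta$, outer regularity of arc-length measure produces an open set $U \subseteq \partial \D$ with $\sigma(\E) \subseteq U$ and $\Leb(U) < \delta$. The compact set $\partial \D \setminus U$ lies in the resolvent set of $\E$, so $(\E-z)^{-1}$ is uniformly bounded there, and a standard Neumann-series argument (writing $\E' - z = (\E-z)(I + (\E-z)^{-1}(\E'-\E))$) yields $\eta > 0$ such that every $\E'$ with $\|\E - \E'\| < \eta$ satisfies $\sigma(\E') \subseteq U$, hence $\Leb(\sigma(\E')) \le \Leb(U) < \delta$. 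This gives $B_\eta(\E) \cap \LP \subseteq M_\delta$.

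For density, I would apply Lemma~\ref{l:lp:smallspec} directly. Given $\E \in \LP$ and $\varepsilon > 0$, I pick $r \in (0,1)$ with $\E \in \LP(r)$, and since $\LP(r)$ is by definition the operator-norm closure of $\P(r)$, I can find $\E_0 \in \P(q,r)$ with $\|\E - \E_0\| < \varepsilon/2$. Replacing $q$ by $2q$ if necessary, I may assume $q$ is even. Applying Lemma~\ref{l:lp:smallspec} to $\E_0$ with tolerance $\varepsilon/2$ produces constants $c, N_0 > 0$ such that for every $n \ge N_0$ there exists $\E' \in \P(nq,r)$ with $\|\E_0 - \E'\| < \varepsilon/2$ and $\Leb(\sigma(\E')) \le e^{-cnq}$. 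Taking $n$ large enough that additionally $e^{-cnq} < \delta$ places $\E' \in M_\delta \cap B_\varepsilon(\E)$, since periodic pCMV matrices lie in $\LP$.

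Both halves of the theorem thus come out essentially as corollaries: the hard work has already been done inside Lemma~\ref{l:lp:smallspec}, which packages together the gap-opening Lemma~\ref{l:opengaps} and the lower bound on $\dd\nu/\dd\tau$ from Theorem~\ref{t:ids:HSnorm} via the Lyapunov-exponent analysis to produce small-spectrum perturbations. I do not anticipate any real obstacle; the one point requiring mild care is that the periodic approximants must remain inside the same class $\P(r)$ as $\E$, which is automatic from the definition of $\LP(r)$ as the operator-norm closure of $\P(r)$, so the $r$ bound is preserved throughout the perturbation chain.
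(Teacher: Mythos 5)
Your proposal is correct and follows essentially the same route as the paper: density is obtained by approximating with a periodic operator and invoking Lemma~\ref{l:lp:smallspec} with $n$ large enough that $e^{-cnq}<\delta$, and openness follows from upper semicontinuity of the spectrum under norm perturbations (the paper phrases this via the $\varepsilon$-neighborhood of $\sigma(\E)$ and ``standard perturbative arguments,'' which for unitary operators gives the same conclusion as your Neumann-series estimate). No gaps.
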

\begin{proof}
By Lemma~\ref{l:lp:smallspec}, $M_\delta$ is dense, so it suffices to show that it is open. To that end, suppose that $\E\in M_\delta$. We will show that there exists an $r>0$ such that 
\[
\Leb{(\sigma(\E'))}<\delta
\]
whenever $\Vert \E-\E'\Vert<r$. Let
\[
B_\varepsilon
=
B_\varepsilon(\sigma(\E))
=
\set{z \in \partial \D : \inf_{w \in \sigma(\E)} |z-w| < \varepsilon}.
\]
We can clearly find an $\varepsilon$ so that $\Leb(B_\varepsilon)<\delta$. We simply set $r=\varepsilon$, and we are done by standard perturbative arguments.
\end{proof}

\section{Craig--Simon for CMV Matrices} \label{sec:craigsim}

In the present section, we will prove Theorem~\ref{t:cmv:craigsim}. The key fact here is known as the \emph{Thouless formula} (See for instance \cite[Equation~(10.16.5)]{S2}). One has
\begin{equation} \label{eq:thouless}
\int_{\partial \D} \log|z-w| \, \dd \nu(w)
=
L(z) + \log\rho_\infty
\text{ for all } z \in \C,
\end{equation}
where
\[
\log \rho_\infty 
=
\frac{1}{2} \int_\Omega \! \log(1-|f(\omega)|^2) \, \dd\mu(\omega),
\]
and $L(z)$ denotes the \emph{Lyapunov exponent} of the transfer matrix cocycle, defined by
\[
L(z)
=
\lim_{n \to \infty} \frac{1}{n} \int_{\Omega} \log\|Z_\omega(n,0;z) \| \, \dd\mu(\omega),
\]
where $Z_\omega$ denotes the family of transfer matrices associated to $\E(\omega)$. Since the transfer matrices satisfy $\det Z_\omega(n,0;z) = (-1)^n$, we clearly have $L(z) \ge 0$ for all $z \in \C$.

\begin{proof}[Proof of Theorem~\ref{t:cmv:craigsim}]
Let $A \subset \partial \D$ be an arc with $\Leb(A) < 1/2$, and denote $A = [z,z']$, i.e., $A$ is the shortest arc joining $z$ to $z'$. Define $A_j$, $j=1,2,3$ by $A_1 = A$,
\begin{align*}
A_2
& =
\set{w \in \partial \D : d(w,z) \leq 1, \text{ and } w \notin A_1} \\
A_3
& =
\set{w \in \partial \D : d(w,z) > 1}.
\end{align*}
By the Thouless formula, we have
\begin{align*}
0 
& \leq
L(z) \\
& =
\log\rho_\infty^{-1} + \int_{\partial \D} \log|z-w| \, \dd\nu(w) \\
& =
\log\rho_\infty^{-1} + \int_{A_1}\log|z-w| \, \dd\nu(w) + \int_{A_2}\log|z-w| \, \dd\nu(w) + \int_{A_3}\log|z-w| \, \dd\nu(w).
\end{align*}
The second integral is negative, so, upon rearrangement, we have
\[
-\int_{A_1} \log|z-w| \, \dd\nu(w)
\leq
\log\rho_\infty^{-1} + \int_{A_3} \log|z-w| \, \dd\nu(w).
\]
Since $\log|z-w| \leq \log|z-z'|$ for $w \in A_1$ and $\log|z-w| \leq \log 2$ for all $w \in \partial \D$, we obtain
\begin{equation} \label{eq:craigsim1}
- \nu(A_1) \cdot \log|z-z'|
\leq
\log 2 + \log\rho_\infty^{-1}.
\end{equation}
Moreover, $\Leb(A) \ge |z-z'|$, so we get
\begin{equation} \label{eq:craigsim2}
-\nu(A_1)\cdot \log|z-z'|
\geq 
-\nu(A_1) \log\Leb(A).
\end{equation}
Combining \eqref{eq:craigsim1} and \eqref{eq:craigsim2}, we obtain
\[
-\nu(A_1)\log\Leb(A)
\leq 
\log(2\rho_\infty^{-1}),
\]
which yields the statement of Theorem~\ref{t:cmv:craigsim} with $c =  \log(2/\rho_\infty)$.

\end{proof}

\section{Optimality of Log-H\"older Continuity of the DOS} \label{sec:olhc}

Using the construction from Section~\ref{sec:expspec}, we are able to show that the Craig--Simon theorem (Theorem~\ref{t:cmv:craigsim}) is optimal. Our approach is inspired by \cite{KruGan}; however, we note that by making Lemma~\ref{l:lp:smallspec} explicit, we are able to streamline their construction substantially.

\begin{proof}[Proof of Theorem~\ref{t:olhcids}] We will prove this theorem by constructing a sequence of periodic approximants whose spectra grow small exponentially quickly. This exponential decay is precisely the phenomenon which causes the DOS to have a poor modulus of continuity. To begin, let us write
\[
g(\delta)
=
h(\delta)\log(\delta^{-1})
\]
and note that $g(\delta) \to 0$ as $\delta \to 0$ by assumption. Let a $q$-periodic $\E_0$ (with $q$ even) and $\varepsilon_0 > 0$ be given. We will construct a sequence of periodic operators $\E_1,\E_2,\ldots$ so that $\E_\infty = \lim \E_j$ exists, $\E_\infty \in B_{\varepsilon_0}(\E_0)$, and so that the DOS of $\E_\infty$ satisfies \eqref{eq:olhcids:conclusion} at every point of its spectrum. Throughout the proof, we denote
\[
\Sigma_j
\eqdef \sigma(\E_j),\quad
0 \le j \le \infty.
\]
Take $\varepsilon_1 = \varepsilon_0/2$, and let $0 < c_1 < c(\E_0,\varepsilon_1)$, where $c(\E_0,\varepsilon_1)$ is the constant from Lemma~\ref{l:lp:smallspec}, and choose $\E_1$ to be $\varepsilon_1$-close to $\E_0$ and of period $q_1$ such that
\[
\Leb(\Sigma_1)
\leq
\frac{1}{3} e^{-c_1 q_1}.
\]

Inductively, having defined a $q_{n-1}$ periodic $\E_{n-1}$ and tolerances $\varepsilon_1,\ldots,\varepsilon_{n-1}$, let
\begin{equation} \label{eq:olhcids:epsn:choice}
\varepsilon_n
=
\min\left(\frac{\varepsilon_{n-1}}{2}, \frac{\Leb(\Sigma_{n-1})}{3} \right).
\end{equation}
By Lemma~\ref{l:lp:smallspec}, choose a $q_n$-periodic operator $\E_n$ such that $\|\E_{n-1}-\E_n\| < \varepsilon_n$ and
\begin{equation}\label{eq:olhcids:expsmallspec}
\Leb(\Sigma_n)
\leq
\frac{1}{3} e^{-c_n q_n},
\end{equation}
where $0 < c_n < c(\E_{n-1}, \varepsilon_n)$. We further insist that $q_n$ is taken large enough that
\[
g(e^{-c_n q_n})
\leq
\frac{c_n}{2n}.
\]
Evidently $\E_\infty = \lim \E_j$ exists and is $\varepsilon_0$-close to $\E_0$. Let $z_\infty \in \Sigma_\infty$ be given. For each $\ell \in \Z_+$, we may choose $z_\ell \in \Sigma_\ell$ with
\begin{equation} \label{eq:zlzinfty:dist}
|z_\infty - z_\ell| 
\leq 
\|\E_\infty - \E_\ell\|
\le
\sum_{j = \ell+1}^\infty \varepsilon_j
\le
2\varepsilon_{\ell+1}\
\le 
\frac{2}{9} e^{-c_\ell q_\ell},
\end{equation}
where the third inequality follows from \eqref{eq:olhcids:epsn:choice} and the final inequality follows from \eqref{eq:olhcids:epsn:choice} and \eqref{eq:olhcids:expsmallspec}. Suppose that the band of $\Sigma_\ell$ containing $z_\ell$ is $[\alpha_\ell,\beta_\ell] \subseteq \partial \D$ (with the band running counterclockwise from $\alpha_\ell$ to $\beta_\ell$). Then, since the DOS of $\E_\ell$ gives weight $1/q_\ell$ to $[\alpha_\ell,\beta_\ell]$ and $\|\E_\ell - \E_\infty\| < 2 \varepsilon_{\ell+1}$, there exists a choice of $z_\ell' \in \{\alpha_\ell e^{-\pi i\varepsilon_{\ell+1}},\beta_\ell e^{\pi i \varepsilon_{\ell+1}}\}$  such that
\[
\nu([z_\infty, z_\ell']) 
\geq
\frac{1}{2q_\ell}.
\]
By \eqref{eq:olhcids:expsmallspec}, \eqref{eq:zlzinfty:dist}, and our choice of $z_\ell'$ we have
\[
|z_\ell - z_\ell'|
\le 
\left( \frac{1}{3} + \frac{\pi}{9} \right) e^{-c_\ell q_\ell}.
\]
Combining this with \eqref{eq:zlzinfty:dist}, we have
\[
|z_\infty - z_\ell'|
\le 
\frac{5+\pi}{9} 
e^{-c_\ell q_\ell}
<
e^{-c_\ell q_\ell}.
\]
Consequently,
\begin{align*}
\frac{\nu([z_\infty, z_\ell'])}{h(|z_\infty - z_\ell'|)}
& \ge
\frac{1}{2q_\ell h(e^{-c_\ell q_\ell})} \\
& =
\frac{c_\ell}{2 g(e^{-c_\ell q_\ell})} \\
& \ge
\ell,
\end{align*}
so we get
\[
\limsup_{z \to z_\infty} \frac{\nu([z_\infty,z])}{h(|z_\infty - z|)}
=
\infty,
\]
as desired.
\end{proof}

\section{Limit-Periodic CMV Matrices with Zero-Measure Cantor Spectrum} \label{sec:cantspec}

\subsection{Zero Lebesgue Measure}

\begin{proof}[Proof of Theorem~\ref{t:zm}]
Let $M_\delta$ denote the set of limit-periodic CMV matrices whose spectra have measure at most $\delta$. By Theorem~\ref{t:Udelt:denseopen}, $M_\delta$ is a dense open subset of $\LP$ for every $\delta$, and hence
\[
\mathcal M
\eqdef
\set{\E \in \LP : \Leb(\sigma(\E)) = 0}
=
\bigcap_{\delta > 0} M_\delta
\]
is a dense $G_\delta$ set by the Baire category theorem.
\medskip

Since the spectrum lacks isolated points by ergodicity, it suffices to show that the limit-periodic CMV matrices with no eigenvalues contain a dense $G_\delta$ set.  However, this is clear from Gordon-type arguments. Concretely, let $\mathcal G \subseteq \LP$ denote the set of \emph{Gordon operators} in $\LP$. More specifically, $\E \in \mathcal G$ if and only if $\E \in \LP$ and there exist $q_k \to \infty$ such that
\[
\lim_{k \to \infty} C^{q_k} \max_{-q_k+1 \le n \le q_k} \big( |\alpha_n - \alpha_{n+q_k}| + |\lambda_n - \lambda_{n+q_k}| \big)
=
0
\text{ for all }  C>0.
\]
It is easy to see that $\mathcal G$ is dense in $\LP$ (as it contains all periodic operators), and that it is a $G_\delta$ set. Since $\sigma_{\mathrm{pp}}(\E) = \emptyset$ for every $\E \in \mathcal G$ \cite{F2016PAMS},\footnote{\cite{F2016PAMS} works with ordinary CMV operators, \emph{sans} phases, but it is easy to see that those arguments generalize to the present setting.} we obtain the desired result with $\mathcal C = \mathcal G \cap \mathcal M$, by the Baire Category Theorem.
\end{proof}

\subsection{Zero Hausdorff Dimension}

Here, we will prove Theorem~\ref{t:zhdim}. To establish notation, let us briefly recall the definitions of Hausdorff measures and dimension on the circle; for further details, see \cite{Falconer}.

Given $S \subseteq \partial \D$ and $\delta > 0$, a $\delta$-\emph{cover} of $S$ is a countable collection of arcs $\{I_j\}$ with the property that $S \subseteq \bigcup_j I_j$ and $\Leb(I_j) < \delta$ for each $j$. Then, for each $\gamma \geq 0$, one defines the $\gamma$-dimensional \emph{Hausdorff measure} of $S$ by
\[
h^\gamma(S)
=
\lim_{\delta \downarrow 0} \inf_{\delta\text{-covers}} \sum_j \Leb(I_j)^\gamma.
\]
For each $S \subseteq \partial \D$, there is a unique $\gamma_0 \in [0,1]$ such that
\[
h^\gamma(S)
=
\begin{cases}
\infty & 0 \leq \gamma < \gamma_0 \\
0      & \gamma_0 < \gamma
\end{cases}
\]
We denote $\gamma_0 = \dim_{\Hd}(S)$ and refer to this as the \emph{Hausdorff dimension} of the set $S$.

\begin{proof}[Proof of Theorem~\ref{t:zhdim}]

Let $\E_0 \in \LP$ be $q_0$-periodic, and suppose $\varepsilon_0 > 0$.  We will construct a sequence $(\E_n)_{n=1}^\infty$ consisting of periodic operators so that $\E_\infty = \lim_n \E_n$ satisfies
\[
\|\E_0 - \E_\infty\|  < \varepsilon_0
\text{ and }
h^\gamma(\sigma(\E_\infty))
=
0
\]
for all $\gamma>0$; evidently, this suffices to show that $\sigma(\E_\infty)$ has Hausdorff dimension zero. Throughout the proof, $\Sigma_{n} = \sigma(\E_n)$ for $1 \leq n \leq \infty$.
\newline

Take $\varepsilon_1 = \varepsilon_0/2$. By Lemma~\ref{l:lp:smallspec}, we may construct a $q_1$-periodic operator $\E_1$ with $\|\E_0 - \E_1 \| < \varepsilon_1$, and for which
\[
\delta_1
:=
\Leb(\Sigma_1)
<
e^{-q_1^{1/2}}.
\]
 Having constructed $\E_{n-1}$ and $\varepsilon_{n-1}$ such that $\delta_{n-1} \eqdef \Leb(\Sigma_{n-1}) < \exp(-q_{n-1}^{1/2})$,  let
\begin{equation}\label{eq:nextepsdef}
\varepsilon_{n}
=
\min\left(\frac{\varepsilon_{n-1}}{2},
\frac{1}{2} n^{-q_{n-1}},
\frac{\delta_{n-1}}{4} \right).
\end{equation}
By Lemma~\ref{l:lp:smallspec}, we may construct a $q_{n} := N_n q_{n-1}$-periodic operator $\E_{n}$ with $\|\E_n - \E_{n-1}\| < \varepsilon_{n}$  such that
\begin{equation} \label{eq:smallspec}
\delta_{n}
\eqdef
\Leb(\Sigma_{n})
<
e^{-q_{n}^{1/2}}.
\end{equation}
Clearly, $\E_\infty = \lim_{n\to\infty}\E_n$ exists and is limit-periodic. From the first condition in \eqref{eq:nextepsdef}, we deduce
\[
\| \E_0 - \E_\infty\|
<
\sum_{j = 1}^\infty \varepsilon_j
\leq
\sum_{j=1}^\infty 2^{-j} \varepsilon_0
=
 \varepsilon_0,
\]
so $\E_\infty \in B_{\varepsilon_0}(\E_0)$. Similarly, using the first and second conditions in \eqref{eq:nextepsdef}, we observe that
\[
\|\E_n - \E_\infty\|
<
n^{-q_n}
\]
for each $n \geq 2$. From this, it is easy to see that $\E_\infty$ is a Gordon operator in the sense of \cite{F2016PAMS}, and hence, $\E_\infty$ has purely continuous spectrum. Thus, it remains to show that the spectrum has Hausdorff dimension zero. The key observation in this direction is that \eqref{eq:nextepsdef} yields
\begin{equation} \label{eq:taildist}
\|\E_n - \E_\infty\|
\leq
\sum_{j = n + 1}^\infty \varepsilon_j
<
\sum_{k = 2}^\infty 2^{-k} \delta_n
=
\delta_n/2
\end{equation}
for all $n \in \Z_+$.

Now, let $\delta > 0$ and $\gamma > 0$ be given. Choose $n \in \Z_+$ for which $2\delta_n < \delta$. Then, by \eqref{eq:taildist}, the $\delta_n/2$-neighborhood of $\Sigma_{n}$ comprises a $\delta$-cover of $\Sigma_{\infty}$; denote this cover by $\mathscr I_n$. By \eqref{eq:smallspec}, we have
\[
\sum_{I \in \mathscr I_n}
\Leb(I)^{\gamma}
\leq
q_n 2^\gamma e^{-\gamma q_n^{1/2}}
\]
Sending $\delta \downarrow 0$ (and hence $n \to \infty$), we have $h^{\gamma}(\Sigma_{\infty}) = 0$. Since this holds for all $\gamma > 0$, $\dim_\Hd(\Sigma_\infty) = 0$, as desired.
\end{proof}

\section{CMV Operators on $\ell^2(\N)$ and Their Schur Functions} \label{sec:OPUC}
We consider a CMV operator on $\ell^2(\N)$ rather than $\ell^2(\Z)$. Specifically, we define
\[
\mathcal L_+
=
\bigoplus_{j \in \Z_{\geq 0}} \Theta(\alpha_{2j},\lambda_{2j}),
\quad
\mathcal M_+
=1\oplus 
\bigoplus_{j \in \Z_{\geq 0}} \Theta(\alpha_{2j+1},\lambda_{2j+1}),
\]
where $\Theta$ is defined as in \eqref{eq:Theta.defn}. To clarify, $1$ is not the $2\times 2$ identity matrix, but rather the identity map from the $0th$ vector entry to itself. We can then define a CMV matrix $\mathcal C=\mathcal L_+\mathcal M_+$ and calculate that
\begin{equation} \label{eq:popuc:def}
\mathcal C
=
\begin{bmatrix}
&\lambda_0\overline{\alpha_0}
&\lambda_1\lambda_0\overline{\alpha_1}\rho_0
&\lambda_1\lambda_0\rho_1\rho_0
&0
&0\\
&\lambda_0\rho_0
&-\lambda_1\lambda_0\overline{\alpha_1}\alpha_0
&-\lambda_1\lambda_0\rho_1\alpha_0
&0
&0\\
&0& \lambda_2 \lambda_1 \overline{\alpha_2}\rho_1 
& -\lambda_2 \lambda_1 \overline{\alpha_2}\alpha_1 
& \lambda_3 \lambda_2\overline{\alpha_3} \rho_2 
& \lambda_3 \lambda_2 \rho_3\rho_2 \\
&0& \lambda_2 \lambda_1 \rho_2\rho_1 
& -\lambda_2 \lambda_1 \rho_2\alpha_1 
& -\lambda_3 \lambda_2 \overline{\alpha_3}\alpha_2 
& -\lambda_3 \lambda_2 \rho_3\alpha_2  \\
&& \ddots & \ddots & \ddots & \ddots
\end{bmatrix},
\end{equation}
Here, we have the same $4\times 2$ block pattern we had in \eqref{eq:pcmv:def}, except that the first block is truncated; in essense, this arises when $\alpha_{-1}$ is set to be $-1$.

When all the $\lambda$'s are equal to $1$, this matrix is of crucial importance in the theory of orthogonal polynomials on the unit circle (OPUC). It arises when we have a probability measure $\mu$ on the unit circle with infinite support. We can then perform a Gram-Schmidt orthogonalization process on the sequence $1,z^{-1},z,z,z^{-2},z^2,\ldots$ with inner product
\[
\left<f(z),g(z)\right>
=
\int_{\partial\D} \overline{f(z)}g(z) \, \dd \mu(z),
\]
which gives us a basis of the space of continuous functions on $\partial \D$.  The CMV matrix \eqref{eq:popuc:def} represents the multiplication by $z$ operator on this basis. Thus, the CMV operator allows us to use spectral theoretic techniques to handle problems in orthogonal polynomials. Please consult \cite{S1},\cite{S2} for an extensive discussion of this point of view.

Of course, we can define limit-periodic $\mathcal C$ the same way we defined limit-periodic $\E$, as the closure of the space of periodic CMV operators under the operator norm topology.

All of our results for the $\ell^2(\Z)$ operator $\E$ also hold for the $\ell^2(\N)$ operator $\mathcal C$, with one exception. The Gordon lemma argument given in \cite{F2016PAMS} and \cite{O12} no longer holds, and therefore we cannot rule out the possibility of point spectrum.

\begin{theorem}
There is a dense $G_\delta$ set of limit-periodic pCMV operators on $\ell^2(\mathbb N)$ which enjoy purely singular spectrum of zero Lebesgue measure. Furthermore, a dense set of such operators possess spectrum of zero Hausdorff dimension.
\end{theorem}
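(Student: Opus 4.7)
The plan is to follow the proofs of Theorem~\ref{t:zm} and Theorem~\ref{t:zhdim} essentially verbatim, noting that all the ingredients used there are insensitive to whether the pCMV matrix acts on $\ell^2(\Z)$ or $\ell^2(\N)$. The only genuine subtlety is that the Gordon-type argument that excluded point spectrum on the two-sided line is unavailable on the half-line, which forces the weaker conclusion ``purely singular'' in place of ``purely singular continuous.''

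The first step is to establish half-line analogues of Lemma~\ref{l:opengaps} and Lemma~\ref{l:lp:smallspec}. The transfer matrix formalism of Section~\ref{sec:genstuff} depends only on the Verblunsky coefficients and phases, not on the boundary condition: the monodromy matrix $\Phi$, the discriminant $\Delta$, the Lyapunov exponent, and the DOS identity in Theorem~\ref{t:ids:HSnorm} are all built from the cocycle data. For a $q$-periodic half-line pCMV matrix $\mathcal C$, standard OPUC analysis shows that the essential spectrum coincides with $\{z \in \partial\D : \Delta(z) \in [-2,2]\}$, i.e.\ with the two-sided periodic spectrum, and $\sigma(\mathcal C)$ differs from this set by at most finitely many boundary-induced eigenvalues (which contribute Lebesgue measure zero and zero Hausdorff dimension). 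Consequently, the band-width bounds derived from Proposition~\ref{p:nu=1/q} and Theorem~\ref{t:ids:HSnorm} apply in the half-line setting, and the trace-perturbation calculation in the proof of Lemma~\ref{l:opengaps} is unchanged since it operates purely on the monodromy matrix. Hence the inductive construction of Lemma~\ref{l:lp:smallspec} yields: given any periodic half-line pCMV operator $\mathcal C_0$ and $\delta>0$, there exist periodic half-line pCMV operators within operator-norm distance $\delta$ whose spectra have exponentially small Lebesgue measure.

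With the half-line version of Lemma~\ref{l:lp:smallspec} in hand, the density-plus-openness argument in Theorem~\ref{t:Udelt:denseopen} transfers verbatim, so the set $M_\delta$ of half-line limit-periodic pCMV operators with $\Leb(\sigma(\mathcal C))<\delta$ is dense and open in the corresponding complete metric space of limit-periodic $\mathcal C$. The Baire category theorem then produces a dense $G_\delta$ set $\mathcal M = \bigcap_n M_{1/n}$ of limit-periodic $\mathcal C$ with $\Leb(\sigma(\mathcal C))=0$. For any such $\mathcal C$, the absolutely continuous part of any spectral measure is supported within $\sigma(\mathcal C)$ and is absolutely continuous with respect to $\Leb$, so it must vanish; hence the spectrum is purely singular. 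This yields the first assertion. For the second assertion, I would re-run the inductive construction in the proof of Theorem~\ref{t:zhdim} using the half-line version of Lemma~\ref{l:lp:smallspec}: exponentially small spectra together with the $\delta$-cover estimate $\sum \Leb(I)^\gamma \le q_n 2^\gamma e^{-\gamma q_n^{1/2}}$ gives $\dim_{\mathrm H}(\sigma(\mathcal C_\infty)) = 0$ for a dense subset of limit-periodic $\mathcal C$.

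The principal obstacle, as the authors explicitly remark, is the absence of a Gordon-type exclusion of point spectrum on $\ell^2(\N)$: one-sided solutions of $\mathcal C u = zu$ cannot be compared with their translates to the left because there is no ``left,'' so the near-translation-invariance of the coefficients does not force non-square-summability of putative eigenfunctions. Until a replacement for Gordon's lemma is found in the limit-periodic half-line regime, one cannot upgrade the conclusion from ``purely singular'' to ``purely singular continuous,'' which is why the theorem is stated in the weaker form.
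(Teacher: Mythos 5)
Your proposal is correct and follows exactly the route the paper intends: the paper itself gives no detailed proof, merely remarking that all the two-sided results carry over except the Gordon-lemma exclusion of point spectrum, which is precisely your argument. You in fact supply useful detail the paper omits, notably that the half-line periodic spectrum differs from the band set $\{z:\Delta(z)\in[-2,2]\}$ only by finitely many gap eigenvalues, which are harmless for both the Lebesgue-measure and Hausdorff-dimension estimates.
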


Our results in Section~\ref{sec:dos} also implicitly give us results connecting the Schur function of $\mu$ to the DOS of the associated CMV matrix, $\mathcal C$; in particular, we will see below that (modulo multiplication by $z$ and conjugation), the $\xi$ from Section~\ref{sec:dos} is the Schur function in disguise! Let us now make this more explicit. Each CMV matrix $\mathcal C$ is associated with a Schur function $s$, (i.e.\ an analytic function from the open unit disk to itself). This function is a cousin of the Weyl-Titchmarsh $m$-function for Jacobi and Schr\"odinger operators, in that its limiting behaviour on the boundary gives us information about the spectrum of $\mathcal C$. Letting $\mu$ denote the spectral measure of $\mathcal C$, its associated Schur function $s$ can be defined as 
\begin{equation}\label{eq:Schurdef}
s(z)
=
\frac{1}{z}\frac{F(z)-1}{F(z)+1},
\quad z \in \D,
\end{equation}
where $F(z)$ is the Carath\'eodory function, defined by
\[
F(z)
=
\int \frac{e^{i\theta}+z}{e^{i\theta}-z} \, \dd\mu(\theta),
\quad
z \in \D.
\]

\begin{theorem}\label{t:Schur}
Consider a $q$-periodic CMV matrix $\mathcal C$ on $\ell^2(\N)$ with $q$ even. Let
\[
\Delta(z) = \tr(\Phi(z)),
\quad
\Phi(z) = Z(q,0;z)
\]
be its discriminant and monodromy matrix, respectively and denote by $\dd\nu(\tau)$ its density of states measure, parameterized in the usual way by $\tau\in [0,2\pi)$. For each $j \geq 0$, let $s_j(z)$ be the Schur function corresponding to the $q$-periodic Verblunsky coefficient sequence starting with $\alpha_{j}, \alpha_{j+1},\ldots \alpha_{q-1}, \alpha_0,\ldots,\alpha_{j-1}$. Then,
\[
\dfrac{\dd\nu(\tau)}{\dd\tau}
\geq 
\frac{1}{\pi q} \sum_{j=0}^{\frac{q-2}{2}} \frac{1}{1-|s_{2j}(e^{i\tau})|^2},
\]
whenever $\Delta(e^{i\tau}) \in (-2,2)$.
\end{theorem}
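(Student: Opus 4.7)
The plan is to combine a sharper intermediate estimate hiding inside the proof of Theorem~\ref{t:ids:HSnorm} with the identification of each Schur function value $s_{2j}(e^{i\tau})$ with the M\"obius fixed point $\xi_{2j}$ of the elliptic monodromy matrix $\Phi_{2j}$. Recall that in proving Theorem~\ref{t:ids:HSnorm} we established, for each $j\in\{0,1,\ldots,(q-2)/2\}$, the bound
\[
\Itr\bigl(M_{2j}^{-1} T_{2j}^{-1}(\partial T_{2j}) M_{2j}\bigr) \;\geq\; \frac{1}{1-|\xi_{2j}|^2},
\]
via Propositions~\ref{p:itrMinvAM:calc} and \ref{p:phiinvpartialphi:calc}, and only then weakened it to $\tfrac14\|M_{2j}\|_2^2$ through the inequality \eqref{eq:hsnorm:fixptbd}. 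Retaining the stronger form and running the remainder of that proof (linearity and cyclicity of $\Itr$, the uniqueness clause of Lemma~\ref{l:ellipticsu11}, and the application of Theorem~\ref{t:ids:rot}) produces
\[
\frac{\dd\nu}{\dd\tau} \;\geq\; \frac{1}{\pi q}\sum_{j=0}^{(q-2)/2} \frac{1}{1-|\xi_{2j}|^2},
\]
which is exactly the claim of Theorem~\ref{t:Schur} modulo the identification $\xi_{2j}=s_{2j}(e^{i\tau})$.

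To obtain that identification, I would exploit the fact that the Schur algorithm
\[
s_n(z) \;=\; \frac{\alpha_n + z\, s_{n+1}(z)}{1+\overline{\alpha_n}\,z\, s_{n+1}(z)}
\]
is a M\"obius action on $\D$. Packaging two consecutive Schur steps into a single $\SU(1,1)$ matrix and comparing with $T_{2j}(z)=P(\alpha_{2j+1},1;z)\,Q(\alpha_{2j},1;z)$ from Proposition~\ref{p:phiinvpartialphi:calc}, I would verify by direct computation that
\[
T_{2j}(z)\cdot s_{2j+2}(z) \;=\; s_{2j}(z)
\]
under the M\"obius action \eqref{eq:mobiusdef}; this is the ``invariant section'' property flagged in the introduction, and essentially \cite[Eq.~(5.14)]{simanalogs}. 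From $q$-periodicity of the Verblunsky coefficients we have $s_{2j+q}(z)=s_{2j}(z)$, so iterating the previous identity $q/2$ times yields
\[
\Phi_{2j}(z)\cdot s_{2j}(z) \;=\; s_{2j}(z).
\]
On the interior of a band we have $\Delta(z)\in(-2,2)$, so $\Phi_{2j}$ is elliptic and its M\"obius action on $\D$ has a unique fixed point there. This forces $\xi_{2j}=s_{2j}(e^{i\tau})$ and closes the argument.

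The main obstacle will be the bookkeeping in the Schur/transfer-matrix comparison: the CMV literature uses several competing conventions (cf.\ the five $m$-function analogs of \cite{simanalogs}), so one must align the M\"obius normalization, decide whether the invariant object is $s_n(z)$, $zs_n(z)$, or $s_n(z)/z$, and track the corresponding conjugation by a diagonal unitary carefully. Once those conventions are fixed, the identification is a short algebraic check. A minor secondary point is that if $\mathcal{C}$ carries nontrivial phases $\lambda_n$, one should first pass through the gauge transformation \eqref{eq:gaugedef} to an ordinary CMV matrix; the moduli $|s_{2j}|$ and the fixed points $\xi_{2j}$ are both preserved, so the estimate descends.
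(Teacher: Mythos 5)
Your proposal is essentially correct, and its first half coincides with the paper's proof: the paper likewise re-uses the sharper intermediate estimate $\Itr(M_{2j}^{-1}T_{2j}^{-1}(\partial T_{2j})M_{2j}) \ge (1-|\xi_{2j}|^2)^{-1}$ from the proof of Theorem~\ref{t:ids:HSnorm} and only needs $|\xi_{2j}| = |s_{2j}(e^{i\tau})|$ to conclude. Where you diverge is in how that identification is obtained. The paper does not run the Schur algorithm through the transfer matrices; instead it uses the Weyl-solution characterization: for $z\in\D\setminus\{0\}$ the vector $(1, zs(z))^\top$ spans the contracting eigenspace of $\Phi(z)$ (via the resolvent formula \eqref{eq:CaratheoWeyl} and \cite[Proposition~2.3]{DEFHV}), so the M\"obius action of $\Phi(w)$ fixes $1/(ws(w))$ outside $\overline\D$, and the $\SU(1,1)$ reflection symmetry then forces the fixed point inside $\D$ to be $\xi = \overline{ws(w)}$ — note it is $\overline{ws(w)}$, not $s(w)$ itself, which is exactly the normalization ambiguity you flag; since $|w|=1$ only the modulus enters the final bound, so this is harmless. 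Your invariant-section route via the Schur algorithm is a legitimate alternative (and is the perspective the introduction alludes to via \cite[Eq.~(5.14)]{simanalogs}); it has the advantage of making the cocycle-invariance structure explicit, at the cost of the convention bookkeeping you acknowledge. The one point you leave genuinely implicit, which the paper handles and you should not skip, is why the boundary value $s_{2j}(e^{i\tau})$ exists and satisfies the \emph{strict} inequality $|s_{2j}(e^{i\tau})|<1$ on band interiors (so that the fixed point lies in the open disk and the right-hand side of the theorem is finite): the paper gets this from \cite[Section~11.3]{S2}, where $\Re F(w)>0$ on the interior of a band. Your passage from the identity $\Phi_{2j}(z)\cdot s_{2j}(z)=s_{2j}(z)$ on $\D$ to the elliptic fixed point on $\partial\D$ needs this boundary-value statement (or an equivalent limiting argument) to close.
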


\begin{proof} For $z \in \D\setminus\set{0}$, we may construct a Weyl solution to $\mathcal C \psi = z\psi$. More precisely, with $u = (\mathcal C - z\I)^{-1}\delta_0$ and $v = \mathcal M_+ u$, we have
\begin{equation} \label{eq:CaratheoWeyl}
\begin{bmatrix}
u(n) \\ v(n)
\end{bmatrix}
=
\frac{F(z)+1}{2z} Z(n,0;z) 
\begin{bmatrix}
1 \\ zs(z)
\end{bmatrix},
\quad
n \geq 1,
\end{equation}
by \eqref{eq:Schurdef} and \cite[Proposition~2.3]{DEFHV} (compare \cite[Theorem~3.2.11]{S1}). On the other hand, since $\mathcal C$ is $q$-periodic, the space of initial conditions that lead to square-summable solutions is simply the contracting eigenspace of $\Phi(z)$, so $(1,zs(z))^\top$ spans the contracting eigenspace of $\Phi(z)$ for $z \in \D\setminus\set{0}$. Now, fix $\tau \in [0,2\pi)$ with $\Delta(e^{i\tau}) \in (-2,2)$. By \cite[Section~11.3]{S2}, the real part of the Carath\'eodory function has a boundary value at $w=e^{i\tau}$ with
\[
\Re \, F(w)
>
0,
\]
and hence, \eqref{eq:Schurdef} implies that $s$ has a boundary value with $|s(w)| < 1$. Since $|s| < 1$, the discussion above implies that the M\"obius action of $\Phi(w)$ fixes the point
\[
\frac{1}{ws(w)} \in \C\setminus \overline\D.
\]
On the other hand, $\Phi(w) \in \SU(1,1)$ by assumption, so by \eqref{eq:su11char}, there are complex numbers $p,q$ with
\[
\Phi(w)
=
\begin{bmatrix}
p & q \\ \overline{q} & \overline{p}
\end{bmatrix},
\quad
|p|^2-|q|^2 =1.
\]
Hence, by simple algebra, it follows that $\Phi(w)$ also fixes the point $\overline{ws(w)}$; cf.\ \cite[Theorem~9.3.4]{simszego}. Consequently, since $|s(w)| < 1$, it follows that the fixed point of the M\"obius action of $\Phi(w)$ on $\D$ is precisely
\[
\xi
=
\overline{ws(w)}.
\]

Consequently, by shifting, we get $|\xi_{2j}| = |s_{2j}|$ for each $j$, so, by the proof of Theorem~\ref{t:ids:HSnorm}, we have
\[
\dfrac{d \nu(\tau)}{d\tau}
\geq 
\frac{1}{\pi q}\sum_{j=0}^{\frac{q-2}{2}}\frac{1}{1-\vert \xi_{2j}(e^{i\tau)}\vert^2}
=
\frac{1}{\pi q}\sum_{j=0}^{\frac{q-2}{2}}\frac{1}{1-|s_{2j}(e^{i\tau})|^2},
\]
which proves Theorem~\ref{t:Schur}.
\end{proof}

\appendix
\section{A Discrete-Time RAGE Theorem} \label{sec:RAGE}
We will present a discrete-time variant of the RAGE theorem. A partial version has been given in \cite{HJS}. The  proof for singular continuous and absolutely continuous spectrum that we give follows the arguments in the continuous-time (self-adjoint) setting in \cite{Kirsch}.

Our proof will rely on Wiener's theorem on the circle.

\begin{theorem}[Unitary Wiener's Theorem {\cite[Theorem~12.4.7]{S2}}]\label{t:Wiener}
Let $\mu$ be a probability measure on $\partial \D$ and let $c_n$ be its moments,
\[
c_n
=
\int_{\partial \D} z^n \, \dd\mu(z).
\]
Then
\[
\lim_{N\to\infty} \frac{1}{2N+1}\sum_{n=-N}^N \vert c_n\vert^2
=
\sum_{z\in\partial \D}\vert \mu(\{z\})\vert^2.
\]
In particular, $\mu_{\pp}=0$ if and only if 
\[
\lim_{N \to \infty} \frac{1}{2N+1}\sum_{n=-N}^N\vert c_n\vert^2
=
0.
\]
\end{theorem}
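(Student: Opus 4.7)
My plan is to prove this by expressing $|c_n|^2$ as a double integral, swapping the order of summation and integration, and then recognizing the resulting kernel as one that filters out the diagonal. Concretely, I would write
\[
|c_n|^2 = c_n \overline{c_n} = \int_{\partial\D}\int_{\partial\D} (z\overline{w})^n \, \dd\mu(z) \, \dd\mu(w),
\]
which, upon summing over $-N \le n \le N$ and dividing by $2N+1$, gives
\[
\frac{1}{2N+1}\sum_{n=-N}^N |c_n|^2 = \int_{\partial\D}\int_{\partial\D} K_N(z\overline{w}) \, \dd\mu(z) \, \dd\mu(w),
\]
where $K_N(\zeta) = \frac{1}{2N+1}\sum_{n=-N}^N \zeta^n$ is the normalized Dirichlet kernel. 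The interchange of sum and integral is justified since $\mu\times\mu$ is a finite measure on the compact space $\partial\D\times\partial\D$ and the sum is a finite trigonometric polynomial for each $N$.

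The next step is to analyze $K_N$. For $\zeta = 1$, $K_N(\zeta) \equiv 1$; for $\zeta \neq 1$, summing the geometric series yields $K_N(\zeta) = \frac{\zeta^{N+1} - \zeta^{-N}}{(2N+1)(\zeta - 1)}$, which tends to $0$ as $N\to\infty$. Hence $K_N \to \mathbbm{1}_{\{1\}}$ pointwise on $\partial\D$, and $|K_N| \le 1$ uniformly. By dominated convergence applied on the product space, I obtain
\[
\lim_{N\to\infty} \frac{1}{2N+1}\sum_{n=-N}^N |c_n|^2 = \int_{\partial\D}\int_{\partial\D} \mathbbm{1}_{\{z=w\}} \, \dd\mu(z) \, \dd\mu(w) = (\mu\times\mu)(\Delta),
\]
where $\Delta = \{(z,w) \in \partial\D\times\partial\D : z = w\}$ is the diagonal.

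To finish, I would identify $(\mu\times\mu)(\Delta)$ with $\sum_{z\in\partial\D} \mu(\{z\})^2$ by decomposing $\mu = \mu_{\pp} + \mu_{\mathrm{cont}}$ into its atomic and continuous parts. A Fubini calculation gives $(\mu_{\mathrm{cont}}\times\mu)(\Delta) = \int \mu_{\mathrm{cont}}(\{w\}) \, \dd\mu(w) = 0$ since $\mu_{\mathrm{cont}}$ has no atoms, and similarly for $(\mu\times\mu_{\mathrm{cont}})(\Delta)$, leaving only $(\mu_{\pp}\times\mu_{\pp})(\Delta) = \sum_z \mu(\{z\})^2$. The ``in particular'' statement then follows: the limit vanishes exactly when $\mu$ has no atoms, i.e., $\mu_{\pp} = 0$.

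The only subtle step is the pointwise control of $K_N$ off the diagonal together with the dominated convergence application; everything else is essentially bookkeeping. I do not anticipate serious obstacles since the argument is classical and the compactness of $\partial\D$ makes integrability automatic.
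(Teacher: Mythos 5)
Your argument is correct and complete: the double-integral representation of $|c_n|^2$, the pointwise convergence of the normalized Dirichlet kernel $K_N$ to $\mathbbm{1}_{\{1\}}$ with the uniform bound $|K_N|\le 1$, dominated convergence on the finite product measure, and the Fubini identification of $(\mu\times\mu)(\Delta)$ with the sum of squared atomic masses are all sound. The paper does not prove this theorem but simply cites \cite[Theorem~12.4.7]{S2}, and the proof given there is essentially this same classical argument, so your proposal matches the intended route.
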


\begin{theorem}[Discrete-time RAGE theorem]\label{t:rage}
Let $U$ be a unitary operator on $\ell^2(\Z)$, $\psi \in \ell^2(\Z)$, and $\mu = \mu_{\psi,U}$ the associated spectral measure.\\
\begin{enumerate}
\item[{\rm(a)}] If $\mu = \mu_\pp$, then for every $\epsilon > 0$ there is a $J\in\Z_+$ such that
\begin{equation}\label{eq:RAGEpp}
\limsup_{n\to\infty}\sum_{\vert j\vert \geq J}\left\vert \left
<
\delta_j, U^n \psi\right>\right\vert^2<\epsilon
\end{equation}
\item[{\rm(b)}]  If $\mu = \mu_{\mathrm c}$ {\rm(}i.e.\ $\mu_\pp = 0${\rm)} then for any $J\in \Z_+$ we have
\begin{equation}\label{eq:RAGEc}
\lim_{N\to\infty} \frac{1}{2N+1}\sum_{n=-N}^N\sum_{j=-J}^J \left\vert \left< \delta_j, U^n \psi\right>\right\vert^2
=
0.
\end{equation}
\item[{\rm(c)}] If $\mu = \mu_\ac$ then for any $J\in\Z_+$,
\begin{equation}\label{eq:RAGEac}
\lim_{n\to\infty}  \sum_{j=-J}^J\left\vert \left< \delta_j, U^n \psi\right>\right\vert^2
=
0.
\end{equation}

\end{enumerate}

\end{theorem}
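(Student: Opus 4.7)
The plan is to reduce each of the three statements to a classical fact about the moments of the cross spectral measures $\mu_{\delta_j,\psi}$ defined by
\begin{equation*}
\langle \delta_j, U^n\psi\rangle
=
\int_{\partial \D} z^n \, \dd\mu_{\delta_j,\psi}(z),
\quad n \in \Z.
\end{equation*}
These complex-valued measures arise from polarizing the positive spectral measures of $U$, and the Cauchy--Schwarz bound $|\mu_{\delta_j,\psi}(A)|^2 \le \mu_{\delta_j,\delta_j}(A) \cdot \mu_{\psi,\psi}(A)$, valid for every Borel $A \subseteq \partial \D$, lets me transfer continuity and absolute continuity properties from $\mu = \mu_{\psi,\psi}$ to these cross measures.

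For part (a), the approach is to decompose $\psi = \sum_k \psi_k$ into eigenvectors of $U$ with $U\psi_k = z_k\psi_k$, which is available because $\mu = \mu_\pp$, and to truncate to a finite partial sum $\widetilde\psi = \sum_{k=1}^K \psi_k$ with $\|\psi - \widetilde\psi\|$ arbitrarily small. The unitarity of $U$ controls the contribution of $\psi - \widetilde\psi$ to \eqref{eq:RAGEpp} uniformly in $n$ and $J$. For $\widetilde\psi$, the key point is that the eigenvalue identity $\langle \delta_j, U^n\psi_k\rangle = z_k^n\langle\delta_j,\psi_k\rangle$ reduces the time-$n$ tail $\sum_{|j|\ge J}|\langle \delta_j, U^n\widetilde\psi\rangle|^2$ to a sum that is bounded independently of $n$ by a finite expression in the $\ell^2(\Z)$ tails of $\psi_1,\ldots,\psi_K$; this tends to zero as $J \to \infty$, yielding \eqref{eq:RAGEpp}.

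For part (b), the main step is an extension of the unitary Wiener theorem (Theorem~\ref{t:Wiener}) from probability measures to finite complex measures, which I would obtain by polarization: writing $\mu_{\delta_j,\psi}$ as a linear combination of the positive spectral measures associated with $\delta_j\pm\psi$ and $\delta_j\pm i\psi$, and applying the cited form of Theorem~\ref{t:Wiener} to each. The outcome is that, for each fixed $j$,
\begin{equation*}
\lim_{N\to\infty} \frac{1}{2N+1}\sum_{n=-N}^N \left|\langle\delta_j,U^n\psi\rangle\right|^2
=
\sum_{w\in\partial\D} \left|\mu_{\delta_j,\psi}(\{w\})\right|^2.
\end{equation*}
Since $\mu$ has no atoms, the Cauchy--Schwarz bound above kills every atom of $\mu_{\delta_j,\psi}$, so each individual Cesàro limit vanishes. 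Swapping the finite sum over $|j|\le J$ with the Cesàro average then produces \eqref{eq:RAGEc}.

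For part (c), the same domination bound shows that $\mu_{\delta_j,\psi}$ is absolutely continuous with respect to arc length whenever $\mu$ is, so its Radon--Nikodym derivative lies in $L^1(\partial\D)$, and the Riemann--Lebesgue lemma gives $\langle\delta_j,U^n\psi\rangle \to 0$ as $|n|\to\infty$. Summing over the finite range $|j|\le J$ yields \eqref{eq:RAGEac}. The one step that requires genuine care is the polarization argument upgrading Theorem~\ref{t:Wiener} to complex cross measures in part (b); the rest is standard unitary spectral calculus combined with pointwise decay of Fourier coefficients.
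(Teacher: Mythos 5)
Your proposal is correct and follows essentially the same route as the paper: eigenfunction approximation for (a), Wiener's theorem applied to the cross measures $\mu_{\delta_j,\psi}$ for (b), and absolute continuity plus Riemann--Lebesgue for (c). Your explicit polarization step upgrading Theorem~\ref{t:Wiener} from probability measures to the complex cross measures, together with the Cauchy--Schwarz domination $|\mu_{\delta_j,\psi}(A)|^2 \le \mu_{\delta_j}(A)\,\mu_{\psi}(A)$, is a slightly more careful justification of a point the paper passes over by simply asserting $\mu_j \ll \mu$, but it is a refinement of the same argument rather than a different one.
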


\begin{proof}\ \\
\begin{enumerate}[(a)]
\item If $\psi$ is a finite linear combination of eigenfunctions of $U$, say
\[
\psi
=
\sum_{\ell=1}^k \psi_\ell,
\]
with $U\psi_\ell = z_\ell \psi_\ell$, then \eqref{eq:RAGEpp} obviously holds. Let us consider now an arbitrary $\psi$ with $\mu = \mu_\pp$. Since finite linear combinations of eigenfunction are dense in the pure point subspace, there exists $\psi'$, a finite linear combination of eigenfunctions such that $\Vert \psi-\psi'\Vert^2 < \epsilon/4$ (using the $\ell^2$ norm). Thus, we may choose $J \in \Z_+$ such that
\[
\limsup_{n\to\infty}\sum_{\vert j\vert \geq J}\left\vert \left< \delta_j, U^n \psi'\right>\right\vert^2
<
\epsilon/4.
\] 
We then have
\[
\sum_{\vert j\vert \geq J}\left\vert \left< \delta_j, U^n \psi\right>\right\vert^2
\leq 
2\sum_{\vert j\vert \geq J}\left\vert \left< \delta_j, U^n \psi'\right>\right\vert^2
+2\sum_{\vert j\vert \geq J}\left\vert \left< \delta_j, U^n (\psi-\psi')\right>\right\vert^2.
\]
Using this, we get
\begin{align*}
\limsup_{n\to\infty} \sum_{\vert j\vert \geq J}\left\vert \left< \delta_j, U^n \psi\right>\right\vert^2
& \leq
\frac{\epsilon}{2} + 2\limsup_{n\to\infty}\Vert U^n (\psi-\psi')\Vert^2 \\
& = 
\frac{\epsilon}{2}+ 2\Vert \psi-\psi'\Vert^2 \\
& <
\epsilon.
\end{align*} 
\item  Fix $j \in \Z$, and let $\mu_j$ denote the spectral measure corresponding to the pair $(\delta_j,\psi)$. Since $\mu_j \ll \mu$, it follows that $\mu_j$ has no pure points, and hence
\[
\lim_{N \to \infty} \frac{1}{2N+1} \sum_{|n| \le N} |\langle \delta_j,U^n \psi\rangle|^2
=
\lim_{N \to \infty} \frac{1}{2N+1}  \sum_{|n| \le N} \left|\widehat \mu_j(n) \right|^2
=
0
\]
by Wiener's theorem. Thus, for each fixed $J \in \Z_+$, one has
\[
\lim_{N \to \infty} \frac{1}{2N+1} \sum_{|j| \le J} \sum_{|n| \le N} |\langle \delta_j,U^n \psi\rangle|^2
=
\sum_{|j| \le J} \lim_{N \to \infty} \frac{1}{2N+1}  \sum_{|n| \le N} |\langle \delta_j,U^n \psi\rangle|^2
=
0,
\]
which gives \eqref{eq:RAGEc}.
\medskip

\item Let $\mu_j$ be as in part (b). Since $\mu_j$ is absolutely continuous with respect to $\mu$, it is absolutely continuous with respect to Lebesgue measure. Consequently, we can define an $L^1(\partial \D)$ function $h_j$ with the property that
\[
\left< \delta_j, U^n\psi\right>
=
\int_{\partial \D} \! z^n \,  \dd\mu_j(z)
=
\int_{\partial \D} z^n h_j(z) \, \dd z.
\] 
The terminal expression is the $n$th Fourier coefficient of the $L^1$ function $h_j(z)$, which goes to zero as $|n| \to \infty$ by the Riemann--Lebesgue Lemma. Thus, \eqref{eq:RAGEac} follows.
\end{enumerate}
\end{proof}

\end{document}